\newcommand{\dirlim}[1]{\lim_{\rightarrow}}
\newcommand{\FFF}{\mathscr{F}}
\newcommand{\nr}{\operatorname{nr}}
\newcommand{\OO}{\mathcal{O}}
\newcommand{\CCC}{\mathscr{C}}
\newcommand{\tors}{\operatorname{tors}}
\newcommand{\abs}[1]{\left|{#1}\right|}
\newcommand{\set}[1]{\left\{{#1}\right\}}
\renewcommand{\hom}{\operatorname{Hom}}
\newcommand{\ZZ}{\mathbb{Z}}
\newcommand{\QQ}{\mathbb{Q}}
\newcommand{\RR}{\mathbb{R}}
\newcommand{\FF}{\mathbb{F}}
\newcommand{\gp}[1]{\left\langle {#1}\right\rangle}
\newcommand{\im}{\operatorname{im}}
\newcommand{\res}{\operatorname{res}}
\newcommand{\infl}{\operatorname{inf}}
\newcommand{\cores}{\operatorname{cor}}
\newcommand{\sel}[1]{\operatorname{Sel}_{#1}}
\newcommand{\gal}{\operatorname{Gal}}
\newcommand{\pp}{\mathfrak{p}}
\newcommand{\PP}{\mathfrak{P}}
\newcommand{\CC}{\mathbb{C}}
\newcommand{\rank}{\operatorname{rk}}
\newcommand{\places}{\Omega}
\newcommand{\Res}{\operatorname{Res}}
\renewcommand{\epsilon}{\varepsilon}
\newcommand{\cyc}{\operatorname{cyc}}
\newcommand{\avrk}{\operatorname{Avrk}}
\DeclareSymbolFont{cyrletters}{OT2}{wncyr}{m}{n}
\DeclareMathSymbol{\Sha}{\mathalpha}{cyrletters}{"58}
\theoremstyle{plain}
\newtheorem{theorem}{Theorem}[section]
\newtheorem{lemma}[theorem]{Lemma}
\newtheorem{cor}[theorem]{Corollary}
\newtheorem{example}[theorem]{Example}
\newtheorem{conj}{Conjecture}
\newtheorem{proposition}[theorem]{Proposition}
\newtheorem{hypothesis}{Hypothesis}
\theoremstyle{definition}
\newtheorem{definition}[theorem]{Definition}
\newtheorem{notation}[theorem]{Notation}
\theoremstyle{remark}
\newtheorem{rem}[theorem]{Remark}
\crefname{section}{\S\!\!}{\S\!\!}
\Crefname{section}{\S\!\!}{\S\!\!}
\Crefname{equation}{}{}
\crefname{equation}{}{}
\newcommand{\Epsilon}{\mathcal{E}}
\newcommand{\SSS}{\mathscr{S}}
\newcommand{\bigo}[1]{O\left({#1}\right)}
\title{The Failure of Galois Descent for \texorpdfstring{\lowercase{$p$}}{p}-Selmer Groups of Elliptic Curves}
\author{Ross Paterson}
\subjclass[2010]{Primary 11G05; Secondary 11G07, 11N45, 14H52}
\address{School of Mathematics and Statistics, University of Glasgow, University Place, Glasgow, G12 8QQ.}
\email{rosspatersonmath@gmail.com}
\begin{document}
\begin{abstract}
We show that if $F$ is $\QQ$ or a multiquadratic number field, $p\in\set{2,3,5}$, and $K/F$ is a Galois extension of degree a power of $p$, then for elliptic curves $E/\QQ$ ordered by height, the average dimension of the $p$-Selmer groups of $E/K$ is bounded.  In particular, this provides a bound for the average $K$-rank of elliptic curves $E/\QQ$ for such $K$.  Additionally, we give bounds for certain representation--theoretic invariants of Mordell--Weil groups over Galois extensions of such $F$.

The central result is that: for each finite Galois extension $K/F$ of number fields and prime number $p$, as $E/\QQ$ varies, the difference in dimension between the Galois fixed space in the $p$-Selmer group of $E/K$ and the $p$-Selmer group of $E/F$ has bounded average.
\end{abstract}

\maketitle
% \setcounter{tocdepth}{1}
% \tableofcontents
\section{Introduction}
As $E$ varies amongst elliptic curves over the rational numbers (ordered by height), Bhargava and Shankar \cites{bhargava2Sel} were the first to show that the average rank of the Mordell--Weil group $E(\QQ)$ is bounded.  
It is then natural to ask: for a fixed number field $K$, is the same true of $E(K)$?  Moreover, what dependence does the average rank of $E(K)$ have on $K$?  For multiquadratic extensions $K/\QQ$, an upper bound for the average rank can be derived from the work of Bhargava--Shankar.  With the exception of these bounds, we provide the first known results in this direction.

Let 
\[\Epsilon=\set{(A,B)\in\ZZ^2: \substack{\gcd(A^3,B^2)\textnormal{ is }12^{th}\textnormal{-power free,}\\4A^3+27B^2\neq 0}},\]
which parametrises a set of elliptic curves via the identification $(A,B)\leftrightarrow E_{A,B}:y^2=x^3+Ax+B$.
It is well known, see e.g. \cite{silverman2009arithmetic}*{III.1}, that every elliptic curve defined over the rational numbers is isomorphic to a unique curve in the set of curves parametrised by $\Epsilon$.  The height of $(A,B)\in\Epsilon$, or equivalently of the curve $E_{A,B}$, is defined to be $H(A,B)=H(E_{A,B})=\max\{4\abs{A}^3,27B^2\}$, and for every positive real number $X$, we write $\Epsilon(X)$ for the finite subset of $\Epsilon$ of pairs which have height at most $X$.  Our main result is then:
\begin{theorem}\label{INTRO RANK THM}
	Let $p\in\set{2,3,5}$, $F$ be either $\QQ$ or a multiquadratic number field, and $K/F$ be a Galois $p$--extension.  Then
	\begin{eqnarray*}
	\lefteqn{\limsup_{X\to\infty}\frac{\sum\limits_{(A,B)\in\Epsilon(X)}\rank E_{A,B}(K)}{\#\Epsilon(X)}}
	\\&&\leq
	\begin{cases}
	[K:F]C_2(K/F)+[K:\QQ]\left(C_2(F/\QQ)+\frac{3}{2}\right)&\textnormal{if }p=2\textnormal{ and }F\neq \QQ,\\
	[K:F]\left(C_p(K/F)+\frac{p+1}{p}[F:\QQ]\right)&\textnormal{else,}
	\end{cases}
	\end{eqnarray*}
	where $C_{p}(K/F)$ and $C_{p}(F/\QQ)$ are explicit constants (see \S\ref{subsec:bound shape}).
\end{theorem}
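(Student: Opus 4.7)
The starting point is the standard inequality $\rank E(K) \leq \dim_{\FF_p} \sel{p}(E/K)$, so it suffices to bound the average of $\dim_{\FF_p} \sel{p}(E/K)$. Writing $G = \gal(K/F)$, a finite $p$-group, the key representation-theoretic input is that every finite-dimensional $\FF_p[G]$-module $V$ satisfies $\dim_{\FF_p} V \leq |G| \cdot \dim_{\FF_p} V^G$: since $\FF_p[G]$ is local with residue field $\FF_p$, Nakayama's Lemma lifts a basis of the coinvariants $V_G$ to a generating set of $V$ of size $\dim_{\FF_p} V_G$, and for $p$-groups one has $\dim_{\FF_p} V_G = \dim_{\FF_p} V^G$ (each indecomposable summand of $V$ has one-dimensional head and socle). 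Applied to $V = \sel{p}(E/K)$, this yields
\[
\dim_{\FF_p} \sel{p}(E/K) \;\leq\; [K:F] \cdot \dim_{\FF_p} \sel{p}(E/K)^G.
\]

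The main theorem of the paper bounds the average of the failure-of-descent quantity $\dim_{\FF_p} \sel{p}(E/K)^G - \dim_{\FF_p} \sel{p}(E/F)$ over $(A,B) \in \Epsilon(X)$ by $C_p(K/F)$ as $X \to \infty$. Combined with the previous display, this reduces the problem to bounding the average of $\dim_{\FF_p} \sel{p}(E/F)$, at the cost of an additive contribution $[K:F] \cdot C_p(K/F)$ to the final estimate.

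Two cases arise. In Case 1 ($p = 2$ and $F$ multiquadratic), the extension $F/\QQ$ is itself a Galois $2$-extension, so one iterates the previous two steps with $H = \gal(F/\QQ)$ in place of $G$; this reduces to the average of $\dim_{\FF_2} \sel{2}(E/\QQ)$, which the Bhargava--Shankar bound controls by the explicit constant $3^{7/2}/2$. In Case 2, $\gal(F/\QQ)$ has order coprime to $p$, so $\FF_p[\gal(F/\QQ)]$ is semisimple; decomposing $\sel{p}(E/F)$ into $\chi_D$-isotypic components for the quadratic characters $\chi_D$ indexed by $D \in \mathcal{Q}(F)$ and identifying each with (a subquotient of) $\sel{p}(E^D/\QQ)$ via the twisting isomorphism $E \cong_{\QQ(\sqrt{D})} E^D$ yields
\[
\dim_{\FF_p} \sel{p}(E/F) \;\leq\; \sum_{D \in \mathcal{Q}(F)} \dim_{\FF_p} \sel{p}(E^D/\QQ).
\]
Each summand is bounded on average by a Bhargava--Shankar-type bound for the $p$-Selmer group ($p \in \{2,3,5\}$), producing the constant $3^{5/2}(p+1)/p$. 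A height correction factor $|D|^5$ appears because the twist $E^D$ has height $\asymp |D|^6 H(E)$ while $\#\Epsilon(X) \sim c X^{5/6}$, so $\#\Epsilon(|D|^6 X) \asymp |D|^5 \cdot \#\Epsilon(X)$.

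The main technical obstacle is the twist-counting in Case 2: one must verify that $\sum_{(A,B) \in \Epsilon(X)} \dim_{\FF_p} \sel{p}(E_{A,B}^D/\QQ)$ can be majorised by the Bhargava--Shankar sum over $\Epsilon(|D|^6 X)$, noting that $\{E^D : E \in \Epsilon(X)\}$ sits as a sparse subset of $\Epsilon(|D|^6 X)$. The representation-theoretic inequality and the reduction via the failure-of-descent theorem are formal consequences once the main theorem of the paper is granted.
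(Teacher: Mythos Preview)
Your overall strategy matches the paper's exactly: bound rank by $p$-Selmer dimension, use the inequality $\dim_{\FF_p} V \le |G|\cdot \dim_{\FF_p} V^G$ for $\FF_p[G]$-modules with $G$ a $p$-group, apply the failure-of-descent bound (\Cref{thm:avg dim of fixed space is bdd}) to pass from $\sel{p}(E/K)^G$ to $\sel{p}(E/F)$, and then treat the two base cases just as you describe (iterate for $p=2$ with $F$ multiquadratic; decompose into quadratic twists and invoke Bhargava--Shankar otherwise, with the height rescaling producing the $|D|^5$).

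One genuine gap in your justification: the claim that every indecomposable $\FF_p[G]$-module has one-dimensional head and socle, hence $\dim V_G=\dim V^G$, is false for non-cyclic $p$-groups. For $G=(\ZZ/p\ZZ)^2$ with generators $x,y$, the cyclic module $\FF_p[G]/(x-1,y-1)^2$ is three-dimensional and indecomposable with one-dimensional head but two-dimensional socle; its dual has the dimensions reversed, so neither inequality between $\dim V_G$ and $\dim V^G$ holds in general. Your Nakayama step correctly yields $\dim V\le |G|\cdot\dim V_G$, but this does not by itself give the bound in terms of $V^G$. The paper (proof of \Cref{thm:pSel from p ext is bdd}) instead chooses a subnormal tower $F=L_0\subset\cdots\subset L_k=K$ with each $\gal(L_i/L_{i-1})\cong\ZZ/p\ZZ$, applies the Jordan-block classification (\Cref{lem:modular representation theory of cyclic p-gps}) at each step to get $\dim W\le p\cdot\dim W^{\ZZ/p\ZZ}$, and iterates. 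Equivalently one can induct on $|G|$ using a normal subgroup of index $p$. Either repair yields the inequality you need, after which the rest of your argument goes through verbatim.
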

By a multiquadratic number field, we will always mean a number field $F$ which is a finite Galois extension $F/\QQ$ with $\gal(F/\QQ)\cong (\ZZ/2\ZZ)^r$ for some $r>0$.  For a prime number $p$ we say that a Galois field extension $K/F$ is a $p$-extension if $\gal(K/F)$ is a finite $p$-group.

\begin{rem}
	One can obtain stronger bounds in the case that $K/\QQ$ is multiquadratic.  These are obtained from the results of Bhargava and Shankar by computing the average size of the $5$--Selmer group of the Weil restrictions of our $E/\QQ$ from $K$ (see \Cref{prop:multiquadratic good char average}).
\end{rem}

\begin{rem}
	Conditional on a conjecture of Poonen and Rains \cite{Poonen_2012}*{Conjecture 1.1(b)}, the conclusion of \Cref{INTRO RANK THM} holds for every prime number $p$.  In \S\ref{subsec:conjectures of PR} we discuss the consequences of this conjecture to our results.
\end{rem}

\begin{rem}
	This average rank growth compares nicely with Iwasawa--theoretic considerations in $\ZZ_p$--towers above $F$, as we discuss in \Cref{subsec:Iwasawa}.
\end{rem}

\subsection{Galois Descent}
\Cref{INTRO RANK THM} arises, as is the fashion, from a detailed study of statistical properties of Selmer groups (see e.g. \cite{silverman2009arithmetic}*{X.4}).  For a finite Galois extension of number fields $K/F$ and prime number $p$, we study the failure of Galois descent from $K$ to $F$ for $p$-Selmer groups of elliptic curves $E/\QQ$.  That is, we examine the difference
\begin{equation}\label{INTRO eq descent failure}\dim_{\FF_p}\sel{p}(E/K)^{G}-\dim_{\FF_p} \sel{p}(E/F),\end{equation}
where $G=\gal(K/F)$.

In the case that $p\nmid \#G$ this difference is $0$: the finite cohomology groups $H^i(K/F, E(K)[p])$ and their local analogues are trivial, so the inflation--restriction exact sequence yields an isomorphism $\sel{p}(E/F)\cong \sel{p}(E/K)^{G}$ (see also \S\ref{sec:WeilRes} for a more geometric explanation).  In other words, Galois descent does not fail in the ``good characteristic'' case.

The interesting case, that of so-called ``bad characteristic'', is when $p\mid \#G$.  In this case, Galois descent can fail to an arbitrary extent.  Indeed, consider the congruent number curve, which has Weierstrass equation $y^2=x^3-x$, and let $K/\QQ$ be an arbitrary quadratic field.  On one hand, a recent result of Morgan and the author \cite{MorganPaterson2020}*{Theorems 1.1 and 1.3} implies that, for any fixed positive real number $z$, $100\%$ of quadratic twists $E_d$ of $E$ have
\[\dim_{\FF_2}\sel{2}(E_d/K)^G=\dim_{\FF_2}\sel{2}(E_d/K)>z;\]
on the other hand, early work of Heath-Brown \cites{HB1,HB2} shows that more than $99.9\%$ of quadratic twists $E_d$ of $E$ have
\[\dim_{\FF_2}\sel{2}(E_d/\QQ)\leq 6.\]
In particular, in this latter proportion of twists the difference \eqref{INTRO eq descent failure} must be arbitrarily large.  
\begin{rem}
	There are many more examples of this phenomenon:  if $E/\QQ$ is any elliptic curve with full $2$-torsion then the result of Morgan and the author implies the same behaviour for the groups $\sel{2}(E_d/K)$; if additionally $E$ has no rational $4$-isogeny then the same behaviour as above is known to holds for $\sel{2}(E_d/\QQ)$ by work of Kane \cite{kane2013ranks}*{Theorem 3} and Swinnerton-Dyer \cite{SD2008}. 
\end{rem}

The core statistical result in this paper shows that, despite this, the average size of the failure of Galois descent is bounded as we vary over all elliptic curves over the rational numbers (ordered by height).
\begin{theorem}[\Cref{thm:avg dim of fixed space is bdd}]\label{INTRO: gal descent failure is bdd}
	Let $p$ be a prime number, $F$ be a number field and $K/F$ be a finite Galois extension.  Writing $G=\gal(K/F)$, we have that
	\[\limsup_{X\to\infty}\frac{\sum\limits_{(A,B)\in\Epsilon(X)}\abs{\dim_{\FF_p}\sel{p}(E_{A,B}/K)^G - \dim_{\FF_p}\sel{p}(E_{A,B}/F)}}{\#\Epsilon(X)}\leq C_p(K/F),\]
	where $C_{p}(K/F)$ is an explicit constant (see \S\ref{subsec:bound shape}).
\end{theorem}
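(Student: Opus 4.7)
The plan is to compare the two Selmer groups using inflation--restriction, bound the resulting error terms pointwise by dimensions of Galois cohomology groups of $E(K)[p]$, and then average over elliptic curves. In more detail, let $\alpha\colon \sel{p}(E/F)\to \sel{p}(E/K)^G$ be the natural restriction map. Applying inflation--restriction to $H^1(G_F,E[p])$ identifies $\ker\alpha$ as a subgroup of $H^1(G,E(K)[p])$, while $\coker\alpha$ is controlled by $H^2(G,E(K)[p])$ together with, for each place $v$ of $F$, a local discrepancy bounded by $\dim_{\FF_p}H^1(G_v, E(K_w)[p])$, where $w$ is a place of $K$ above $v$ and $G_v$ is the decomposition group. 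Writing $\delta_v(E)$ for this local contribution, one obtains the pointwise bound
\begin{equation*}
\abs{\dim_{\FF_p}\sel{p}(E/K)^G - \dim_{\FF_p}\sel{p}(E/F)} \leq \dim_{\FF_p} H^1(G,E(K)[p]) + \dim_{\FF_p} H^2(G,E(K)[p]) + \sum_{v\in M_F}\delta_v(E).
\end{equation*}

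Next I would bound each summand. Because $E(K)[p]$ is an $\FF_p[G]$-submodule of $E[p]\cong\FF_p^{\oplus 2}$, it occupies only finitely many isomorphism classes, so the global cohomology terms are bounded uniformly in $E$ by a constant depending only on $K/F$ and $p$. For the local defects, $\delta_v(E)=0$ whenever $v\nmid p$ is unramified in $K/F$ and $E$ has good reduction at $v$, by standard properties of unramified cohomology. Hence the sum reduces to contributions from the (finitely many) places of $F$ above $p$ or that ramify in $K/F$, which are uniformly bounded, together with a sum over primes of bad reduction of $E$.

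It then remains to bound the average over $E\in\Epsilon(X)$ of $\sum_{v\text{ bad}}\delta_v(E)$. At a bad-reduction place of residue characteristic $\ell$, $\delta_v(E)$ is nonzero only if $E(K_w)[p]\neq 0$, a constraint on the mod-$p$ local Galois representation of $E$ at $\ell$. Splitting by $\ell$ and invoking standard counts of elliptic curves of bounded height with prescribed local behaviour at $\ell$, one derives a per-$\ell$ bound summable over $\ell$; this produces the explicit constant $C_p(K/F)$ and completes the averaging.

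The main obstacle is this final step. A pointwise bound by a multiple of the number of bad primes would give only an $O(\log\log X)$ average and hence diverge, so one cannot estimate place by place uniformly; instead, the defect $\delta_v$ at a bad prime above $\ell$ must be shown to be nonzero on a set of $E$ whose $\ell$-adic density decays in $\ell$ fast enough to sum. Establishing sharp $\ell$-adic counts for the local mod-$p$ Galois representation of a random $E\in\Epsilon(X)$, and then summing the contributions carefully across the ramified and bad primes to obtain the shape of $C_p(K/F)$ described in \S\ref{subsec:bound shape}, is the technical heart of the argument.
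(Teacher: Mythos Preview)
Your overall architecture matches the paper's: pass from $\sel{p}(E/K)^G$ to an auxiliary Selmer group over $F$ via inflation--restriction (the paper calls it $\sel{\FFF(K)}(F,E[p])=\res^{-1}\sel{p}(E/K)$), absorb the global $H^1(G,E(K)[p])$ and $H^2(G,E(K)[p])$ terms using that almost all $E$ have $E(K)[p]=0$, and then bound the local discrepancies on average.  The gap is in your identification of the local discrepancy and in the vanishing mechanism you use to make the sum over primes converge.

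The local term is \emph{not} $\dim H^1(G_v,E(K_w)[p])$.  If you chase the diagram, the quotient $\FFF_v/\SSS_v(F)$ is isomorphic to $H^1(G_v,E(K_w))[p]$, i.e.\ to the $p$--part of the norm cokernel $E(F_v)/N_{K_w/F_v}E(K_w)$; in the paper this is packaged as $\dim E(F_v)/(N_{K_w/F_v}E(K_w)+pE(F_v))$ and controlled via Greenberg--Wiles duality between $\FFF$ and its dual structure $\CCC$.  Your proposed bound $\dim H^1(G_v,E(K_w)[p])$ is not an upper bound for this: take $E$ with split multiplicative reduction of type $I_m$ at an unramified place with $p\mid m$ and $p\mid[K_w:F_v]$, chosen so that $\mu_p\not\subset K_w$ and the Tate parameter is not a $p$\textsuperscript{th} power.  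Then $E(K_w)[p]=0$, so your $\delta_v=0$, but the norm cokernel is $\ZZ/p\ZZ$ by the Tate--curve computation.

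This also breaks the averaging you sketch.  The condition ``$E(K_w)[p]\neq 0$ at a bad prime over $\ell$'' does not cut out a set of $\ell$--adic density $O(\ell^{-2})$: for $\ell\equiv 1\pmod p$ every split multiplicative curve has $\mu_p\subset E(K_w)[p]$, and the density of such curves is $\asymp 1/\ell$, so the sum over $\ell$ diverges (exactly the $\log\log X$ obstruction you flagged).  The paper's replacement for this step is the content of \S\ref{sec:local copmputations}: the norm cokernel vanishes at every unramified place of reduction type $I_1$ (Propositions~\ref{prop:split mult norm index}--\ref{prop:nonsplit mult norm index odd}), and the density of $(A,B)\in\Epsilon(X)$ with bad reduction at $\ell$ \emph{not} of type $I_1$ is $\sim 2/\ell^{2}$, which is summable and yields the series in $C_p(K/F)$.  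That $I_1$ computation via the Tate parametrisation is the missing idea.
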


\subsection{Selmer Ranks}
In the case of a Galois $p$-extension, the $p$-Selmer group is a modular representation of the Galois group.  Appealing to the theory of such, we use \Cref{INTRO: gal descent failure is bdd} to bound the average dimension of the full Selmer group.
\begin{theorem}[\Cref{cor:p extensions of multiquadratics}]\label{INTRO: full p selmer thm}
	Let $p\in\set{2,3,5}$, $F$ be either $\QQ$ or a multiquadratic number field, and $K/F$ be a Galois $p$--extension.  Then
	\begin{eqnarray*}
	\lefteqn{\limsup_{X\to\infty}\frac{\sum\limits_{(A,B)\in\Epsilon(X)}\dim_{\FF_p}\sel{p}(E_{A,B}/K)}{\#\Epsilon(X)}}
	\\&&\leq
	\begin{cases}
	[K:F]C_2(K/F)+[K:\QQ]\left(C_2(F/\QQ)+\frac{3}{2}\right)&\textnormal{if }p=2\textnormal{ and }F\neq \QQ,\\
	[K:F]\left(C_p(K/F)+\frac{p+1}{p}[F:\QQ]\right)&\textnormal{else,}
	\end{cases}
	\end{eqnarray*}
	where $C_{p}(K/F)$ and $C_{p}(F/\QQ)$ are explicit constants (see \S\ref{subsec:bound shape}).
\end{theorem}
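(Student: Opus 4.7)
The plan is to combine \Cref{INTRO: gal descent failure is bdd} with a representation-theoretic bound for $p$-group modules and known Bhargava--Shankar-type averages for $p$-Selmer dimensions over $\QQ$. The first key ingredient is the elementary fact that if $G$ is a finite $p$-group and $V$ a finite-dimensional $\FF_p[G]$-module, then $\dim_{\FF_p}V \leq \abs{G}\dim_{\FF_p}V^G$: by induction on $\abs{G}$, the base case $G=\ZZ/p\ZZ$ reduces to the classification of indecomposable $\FF_p[\ZZ/p\ZZ]$-modules (each of dimension at most $p$, with one-dimensional fixed subspace), and the inductive step uses a normal subgroup of index $p$ together with the fact that $V^H$ is a $G/H$-module. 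Applied to $V=\sel{p}(E_{A,B}/K)$ under $G=\gal(K/F)$, and combined with the triangle inequality, this gives
\[\dim_{\FF_p}\sel{p}(E_{A,B}/K) \leq [K:F]\bigl(\dim_{\FF_p}\sel{p}(E_{A,B}/F) + \abs{\dim_{\FF_p}\sel{p}(E_{A,B}/K)^{G} - \dim_{\FF_p}\sel{p}(E_{A,B}/F)}\bigr).\]
Averaging over $\Epsilon(X)$ and applying \Cref{INTRO: gal descent failure is bdd} to the second summand contributes $[K:F]C_p(K/F)$; the remaining task is to control the average of $\dim_{\FF_p}\sel{p}(E_{A,B}/F)$.

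I would then split into cases by $F$. If $F=\QQ$, the average of $\dim_{\FF_p}\sel{p}(E/\QQ)$ for $p\in\set{2,3,5}$ is bounded by a Bhargava--Shankar-type result, producing the explicit Bhargava--Shankar constants. If $F$ is multiquadratic and $p=2$, then $F/\QQ$ is itself a $2$-extension, so we iterate: the $p$-group lemma applied to $\gal(F/\QQ)$ and \Cref{INTRO: gal descent failure is bdd} for $F/\QQ$ add $[F:\QQ]C_2(F/\QQ)$ and reduce the remaining term to the average over $\QQ$, explaining the constant $\frac{3^{7/2}}{2}$. If $F$ is multiquadratic and $p\in\set{3,5}$, the order of $\gal(F/\QQ)=(\ZZ/2\ZZ)^r$ is coprime to $p$, so $\FF_p[\gal(F/\QQ)]$ is semisimple and $\sel{p}(E/F)$ decomposes into isotypic components indexed by characters of $\gal(F/\QQ)$; each character corresponds to an element $D\in\mathcal{Q}(F)$ and its isotypic component is identified with $\sel{p}(E^D/\QQ)$ via Weil restriction, giving
\[\dim_{\FF_p}\sel{p}(E/F) = \sum_{D\in\mathcal{Q}(F)} \dim_{\FF_p}\sel{p}(E^D/\QQ).\]

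The main obstacle is handling this last case: one must bound the average of $\dim_{\FF_p}\sel{p}(E^D_{A,B}/\QQ)$ for each $D\in\mathcal{Q}(F)$ as $(A,B)$ ranges over $\Epsilon(X)$, \emph{not} over the height of the twisted curve. The map $E\mapsto E^D$ is a bijection on isomorphism classes which scales height by $\abs{D}^6$, so the sum over $\Epsilon(X)$ is dominated by the untwisted sum over $\Epsilon(\abs{D}^6 X)$; since $\#\Epsilon(X)\asymp X^{5/6}$, renormalising introduces the factor $\abs{D}^5$, and a Bhargava--Shankar bound over $\QQ$ then yields $3^{5/2}\frac{p+1}{p}\sum_{D\in\mathcal{Q}(F)}\abs{D}^5$. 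Assembling the contributions from descent failure of $K/F$ and the case analysis above produces the claimed inequality; the rank consequence in \Cref{INTRO RANK THM} is then immediate from $\rank E(K)\leq\dim_{\FF_p}\sel{p}(E/K)$.
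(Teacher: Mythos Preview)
Your proposal is correct and follows essentially the same route as the paper. The paper proves the $p$-group bound $\dim_{\FF_p}V\leq \abs{G}\dim_{\FF_p}V^G$ exactly as you describe (via a chain of index-$p$ subgroups and the Jordan-block classification of indecomposable $\FF_p[\ZZ/p\ZZ]$-modules), then averages and applies \Cref{INTRO: gal descent failure is bdd}; the case analysis for the base-field average---Bhargava--Shankar directly over $\QQ$, iteration through $F/\QQ$ when $p=2$ and $F$ is multiquadratic, and the quadratic-twist decomposition with the $\abs{D}^6$ height rescaling when $p\in\{3,5\}$ and $F$ is multiquadratic---matches the paper's argument in \Cref{prop:multiquadratic good char average} and \Cref{cor:p extensions of multiquadratics} essentially line for line.
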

It is from this result, and the usual inclusion $E(K)/pE(K)\subseteq \sel{p}(E/K)$, that we obtain \Cref{INTRO RANK THM}.
\begin{example}
	The bounds obtained in \Cref{INTRO: full p selmer thm} are typically rather large.  Let $K/\QQ$ be the splitting field of $x^{10} - 35 x^{6} + 130 x^{4} + 160$, so that the Galois group $\gal(K/\QQ)$ is isomorphic to $D_{10}$ the dihedral group of order $10$.  In this case, $F=\QQ(\sqrt{-10})$ is a multiquadratic field contained in $K$, and $K/F$ is a degree $5$ extension, so we can apply \Cref{INTRO: full p selmer thm} with $p=5$.   We can compute that $C_5(K/F)\leq8.36$.  Thus we have that the average dimension of $5$-Selmer groups over $K$ of elliptic curves over $\QQ$ is less than $54$, and in particular the same bound is true of the average rank of the Mordell--Weil groups $E(K)$.
\end{example}

\subsection{Mordell--Weil Lattices}
We deduce some representation--theoretic information about the ``free part'' of Mordell--Weil groups from \Cref{INTRO: gal descent failure is bdd}.  Specifically, for an elliptic curve $E/\QQ$ and number field $K$ write $\Lambda(E/K)$ for the so--called Mordell--Weil lattice, that is, the quotient of the group $E(K)$ by its torsion subgroup
\[\Lambda(E/K)=E(K)/E(K)_{\tors}.\]
For a finite Galois extension $K/F$, writing $G=\gal(K/F)$, $\Lambda(E/K)$ is a $\ZZ$-free $\ZZ[G]$-module, i.e. a $\ZZ[G]$-module which is free as an abelian group; we refer to such modules as $\ZZ[G]$-lattices.

The integral representation theory of finite groups is more delicate than representation theory over fields.  For example if for some prime number $p$ there is a $p$--Sylow subgroup of $G$ which is not cyclic of order at most $p^2$ then there are infinitely many isomorphism classes of indecomposable $\ZZ[G]$-lattices \cite{curtis1981methods}*{Theorem 33.6}.  Moreover, the unique decomposition of representations, which holds over fields as a result of the Krull--Schmidt--Azumaya theorem \cite{curtis1981methods}*{Theorem 6.12}, does not generally hold for $\ZZ[G]$--lattices, so the na\"ive notion of multiplicity of indecomposable sublattices need not be well defined.

We begin by providing a suitable notion of ``multiplicity'' for a $\ZZ[G]$--lattice $\Lambda$ (see \Cref{def:multiplicity}) inside of $\Lambda(E/K)$, which we denote by $e_{\Lambda}(K/F;E)$.  We then provide a bound for the average of $e_{\Lambda}(K/F;E)$, so long as $\Lambda$ satisfies a local condition somewhere and $F$ is a contained in a multiquadratic field.

\begin{theorem}[\Cref{cor:p=235 general MW result}]\label{INTRO:general p MWlattice result}
	Let $p\in\set{2,3,5}$, $F$ be either $\QQ$ or a multiquadratic number field, and $K/F$ be a finite Galois extension.  Writing $G=\gal(K/F)$, we have that for every $\ZZ [G]$-lattice $\Lambda$ such that $\dim_{\FF_p}(\Lambda/p\Lambda)^G\geq 1$,
	\begin{eqnarray*}
	\lefteqn{\limsup_{X\to\infty}\frac{\sum\limits_{(A,B)\in\Epsilon(X)}e_\Lambda(K/F;E_{A,B})}{\#\Epsilon(X)}}
	\\&&\leq\frac{1}{\dim_{\FF_p}(\Lambda/p\Lambda)^G}\cdot
	\begin{cases}
	C_2(K/F)+[F:\QQ]\left(C_2(F/\QQ)+\frac{3}{2}\right)&\textnormal{if }p=2\textnormal{ and }F\neq \QQ,\\
	C_p(K/F)+\frac{p+1}{p}[F:\QQ]&\textnormal{else,}
	\end{cases}
	\end{eqnarray*}
	where $C_{p}(K/F)$ and $C_{p}(F/\QQ)$ are explicit constants (see \S\ref{subsec:bound shape}).
\end{theorem}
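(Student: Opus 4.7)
The strategy is to bound $e_\Lambda(K/F;E)$ in terms of $\dim_{\FF_p}\sel{p}(E/K)^G$, and then to invoke \Cref{INTRO: gal descent failure is bdd} together with a bound on the average of $\dim_{\FF_p}\sel{p}(E/F)$.

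By the construction of $e_\Lambda(K/F;E)$ in \Cref{def:multiplicity}, combined with the $\ZZ$-freeness of $\Lambda(E/K)$, one obtains
\[e_\Lambda(K/F;E)\cdot\dim_{\FF_p}(\Lambda/p\Lambda)^G\ \leq\ \dim_{\FF_p}\bigl(\Lambda(E/K)/p\Lambda(E/K)\bigr)^G;\]
intuitively, the $e_\Lambda$ copies of $\Lambda$ realising the multiplicity contribute $e_\Lambda$ copies of $(\Lambda/p\Lambda)^G$ after reduction mod $p$ and passage to $G$-invariants. Next, the short exact sequence of $\ZZ[G]$-modules $0\to E(K)_{\tors}\to E(K)\to\Lambda(E/K)\to 0$ stays short exact after tensoring with $\FF_p$ (since $\Lambda(E/K)$ is $\ZZ$-torsion-free), and the resulting long exact $G$-cohomology sequence, combined with the Kummer injection $E(K)/pE(K)\hookrightarrow\sel{p}(E/K)$, yields
\[\dim_{\FF_p}\bigl(\Lambda(E/K)/p\Lambda(E/K)\bigr)^G\ \leq\ \dim_{\FF_p}\sel{p}(E/K)^G+\dim_{\FF_p}H^1(G,E(K)_{\tors}/p).\]
By Merel's theorem, $|E(K)_{\tors}|$ is bounded in terms of $[K:\QQ]$ alone, so the $H^1$-term is uniformly bounded in $E$ and vanishes on taking the $\limsup$ of the average.

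Averaging over $\Epsilon(X)$ and applying \Cref{INTRO: gal descent failure is bdd} replaces the right-hand side, up to the additive constant $C_p(K/F)$, by the $\limsup$ of the average of $\dim_{\FF_p}\sel{p}(E_{A,B}/F)$. When $p=2$ and $F$ is multiquadratic, $F/\QQ$ is itself a $2$-extension, so \Cref{INTRO: full p selmer thm} applied to $F/\QQ$ bounds this quantity by $[F:\QQ](C_2(F/\QQ)+3^{7/2}/2)$ (using $C_2(\QQ/\QQ)=0$). In all other cases $p\nmid[F:\QQ]$, so $\FF_p[\gal(F/\QQ)]$ is semisimple and the characters of $\gal(F/\QQ)$ biject with $\mathcal{Q}(F)$; decomposing $\sel{p}(E/F)$ along these characters gives $\dim_{\FF_p}\sel{p}(E/F)=\sum_{D\in\mathcal{Q}(F)}\dim_{\FF_p}\sel{p}(E_D/\QQ)$ for the quadratic twists $E_D$. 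Since $H(E_D)=|D|^6H(E)$ and $\#\Epsilon(Y)\asymp Y^{5/6}$, the map $E\mapsto E_D$ sends $\Epsilon(X)$ injectively into $\Epsilon(|D|^6X)$ with relative density $|D|^{-5}$, and the classical Bhargava--Shankar / Bhargava--Skinner--Zhang bound $\operatorname{avg}\dim_{\FF_p}\sel{p}(E'/\QQ)\leq 3^{5/2}(p+1)/p$ for $p\in\{2,3,5\}$ then contributes $|D|^5\cdot 3^{5/2}(p+1)/p$ from each $D$, producing the sum $\sum_{D\in\mathcal{Q}(F)}|D|^5$ in the stated bound. Dividing through by $\dim_{\FF_p}(\Lambda/p\Lambda)^G$ concludes.

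The main difficulty lies in making the twist-decomposition step rigorous: turning a Bhargava--Shankar-type average over the larger family $\Epsilon(|D|^6X)$ into a controlled average of $\dim_{\FF_p}\sel{p}(E_D/\QQ)$ over $\Epsilon(X)$ requires the density calculation producing the $|D|^5$ factor, and one must verify that this calculation is compatible with the $\limsup$ under consideration and that reduced Weierstrass models on both sides are handled consistently.
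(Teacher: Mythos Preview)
Your overall strategy matches the paper's almost exactly: reduce $e_\Lambda$ to $\dim(\Lambda(E/K)/p\Lambda(E/K))^G$, then to $\dim\sel{p}(E/K)^G$ plus a torsion cohomology error, then invoke \Cref{INTRO: gal descent failure is bdd} and finally bound the average of $\dim\sel{p}(E/F)$ by the twist decomposition (odd $p$) or by applying the $2$-extension bound to $F/\QQ$ ($p=2$, $F$ multiquadratic). The twist-decomposition step you describe is precisely \Cref{prop:multiquadratic good char average}, and the $p=2$ case is precisely \Cref{cor:p extensions of multiquadratics} applied to $F/\QQ$.

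There is, however, a genuine gap in how you dispose of the error term $\dim H^1(G,E(K)_{\tors}/p)$. Merel's theorem gives a uniform bound on this quantity, but a uniformly bounded nonnegative term does \emph{not} ``vanish on taking the $\limsup$ of the average'': if the bound were $D$, your argument would only yield the stated inequality with an extra additive $D/\dim(\Lambda/p\Lambda)^G$ on the right, which is not the theorem. What you actually need is that this term is \emph{zero} for $100\%$ of curves. The paper obtains this from \Cref{prop:torsion bound}: for any fixed $K$ and $n$, the proportion of $(A,B)\in\Epsilon(X)$ with $E_{A,B}(K)[n]\neq 0$ is $O(X^{-1/6}\log X)$. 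Since $H^1(G,E(K)[p])=0$ whenever $E(K)[p]=0$, the average of the error term tends to $0$ and drops out of the $\limsup$. Once you replace the Merel appeal by this density-zero statement, your argument goes through and coincides with the paper's proof (\Cref{prop:eLambda in terms of selmer fixed space}, \Cref{thm:general p MW result}, \Cref{cor:p=235 general MW result}).

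Two minor remarks: the ``$C_2(\QQ/\QQ)=0$'' aside is not needed, since applying \Cref{INTRO: full p selmer thm} with base $\QQ$ lands in the ``else'' branch and directly gives $[F:\QQ](C_2(F/\QQ)+3^{7/2}/2)$; and the relevant average-Selmer input is Bhargava--Shankar (\Cref{prop:Bharghava Shankar}), not Bhargava--Skinner--Zhang.
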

For example, if $G$ is a $p$-group then, by the orbit stabiliser theorem, for every $\ZZ[G]$-lattice $\Lambda$ we have $\dim_{\FF_p}(\Lambda/p\Lambda)^G\geq 1$.  Of course, in this case these multiplicities can already be shown to be bounded by applying \Cref{INTRO RANK THM}.
\begin{rem}\label{rem:rank thm sophisticated app for lattices}
	Many lattice multiplicities can already be bounded using \Cref{INTRO RANK THM}, even when $K/\QQ$ is not of the correct form for direct application.  If $K/\QQ$ is Galois with group $G$, then if one can choose a normal subgroup $N\leq G$ for which $\Lambda^N\neq 0$ then we can track the multiplicity by passing to the associated subfield.  More precisely, in this setting if $L=K^N$ then for each $E/\QQ$,
	\[e_\Lambda(K/\QQ;E)\leq \frac{\rank E(L)}{\rank\Lambda^N}.\]
	If the subextension $L/\QQ$ is of the correct form for \Cref{INTRO RANK THM}, i.e. $G/N$ is an extension of a (possibly trivial) elementary abelian $2$-group by a $p$-group, then we can still bound the multiplicity with \Cref{INTRO RANK THM}.  
\end{rem}
In light of the above, one may ask whether \Cref{INTRO:general p MWlattice result} is a formal consequence of \Cref{INTRO RANK THM}.  The following example demonstrates that this is not the case.

\begin{example}\label{ex:MWLatticeExample}
	Let $K/\QQ$ be a finite Galois extension with Galois group $G\cong\FF_5\rtimes\FF_5^\times$.  Let $\pp\triangleleft\ZZ[\zeta_5]$ be the prime ideal lying over $5$ in the ring of integers of the $5^{th}$ cyclotomic field, upon which $\FF_5$ acts by multiplication by $\zeta_5$ and $\FF_5^\times$ acts as $\gal(\QQ(\zeta_5)/\QQ)$.  It is elementary to check that the actions above induce the structure of a $\ZZ[G]$--lattice on $\pp$.  Since $\dim_{\FF_5}(\pp/5\pp)^G=1$, we can bound the multiplicity using \Cref{INTRO:general p MWlattice result}.  

	However, \Cref{INTRO RANK THM} does not bound this multiplicity, even via the sophisticated application in \Cref{rem:rank thm sophisticated app for lattices}, since the action of every non-trivial normal subgroup $N\leq G$ on $\pp$ is without fixed points.

	Our method does not allow us to bound the multiplicity of the lattice $\ZZ[\zeta_5]$ with the analogous action of $G$: this lattice has no $G$-fixed space, so for every prime number $p$ we have that 
	\[(\Lambda/p\Lambda)^G\cong H^1(G,\ZZ[\zeta_{5}])[p],\] 
	and one can easily compute that $H^1(G,\ZZ[\zeta_{5}])=0$.  In particular, \Cref{INTRO:general p MWlattice result} does not allow us to bound the average multiplicity of $\QQ(\zeta_5)$ as an irreducible subrepresentation inside of $E(K)\otimes \QQ$.
\end{example}

\subsection{Interaction with the Poonen--Rains Heuristics}\label{subsec:conjectures of PR}
\Cref{INTRO RANK THM,INTRO:general p MWlattice result,INTRO: full p selmer thm} all depend on $p$ being a small prime.  However this is an artefact of our current state--of--the--art, rather than an indication of special behaviour.  The following is a well known conjecture in the literature.
\begin{conj}[\cite{Poonen_2012}*{Conjecture 1.1(b)}]\label{conj:PoonenRains}
For each prime number $p$, the average of $\#\sel{p}(E/\QQ)$ over all $E/\QQ$ is $p+1$.
\end{conj}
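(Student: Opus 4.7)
The plan is to follow the parametrize-and-count strategy of Bhargava--Shankar, which has established \Cref{conj:PoonenRains} unconditionally for $p\in\set{2,3,5}$ and is essentially the only blueprint currently available for an unconditional proof. The first step is to find a coregular representation $(G,V)$ defined over $\ZZ$ such that $G(\QQ)$-orbits on $V(\QQ)$ with fixed $G$-invariants $I$ are in bijection with non-identity elements of the $p$-Selmer group of a specific elliptic curve $E_I/\QQ$ whose Weierstrass coefficients are polynomials in $I$. For $p=2,3,5$ this is achieved by binary quartic forms, ternary cubics, and pairs of quinary quadratic forms respectively, via the classical interpretation of $p$-coverings of $E_I$ as genus-one normal curves of degree $p$ in $\mathbb{P}^{p-1}$.

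With such a parametrization in hand, I would show that $G(\ZZ)$-orbits on an appropriate subset of $V(\ZZ)$ account for all $p$-Selmer elements, with the defect controlled by a class-number style argument at the finitely many primes of bad reduction. Bhargava's averaging method over a Siegel-type fundamental domain for $G(\ZZ)\backslash V(\RR)$ would then yield an asymptotic count of integral orbits lying over invariants in the height-$\leq X$ region. A local volume computation, combined with the Tamagawa measure comparison between $V(\RR)$ and the product $\prod_v V(\QQ_v)^{\text{loc}}$ cut out by local Selmer conditions, is expected to show that the main term equals $p\cdot\#\Epsilon(X)\cdot(1+o(1))$; adding the trivial element gives the predicted average $p+1$. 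A squarefree sieve at bad primes is needed to enforce the correct local conditions, and a separate cusp analysis is needed to confirm that ``reducible'' integral orbits contribute only the trivial Selmer element per curve and do not inflate the average.

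The main obstacles, in increasing order of severity, are threefold. First, the sieve requires a tail bound uniform in the squarefree modulus, and the ``$a$-invariant'' estimates of \cite{bhargava2Sel} are currently only strong enough to produce such uniformity for small $p$. Second, the cusp regions of $V(\RR)/G(\ZZ)$ become more intricate as the representation grows, so a delicate stratification of reducible loci is needed to certify that no extra main term creeps in from the boundary. Third, and most seriously, for $p\geq 7$ there is no known coregular representation whose invariant ring recovers elliptic curves and whose orbits parametrise $p$-Selmer elements in the above sense; this is a fundamental open problem in the arithmetic invariant theory of higher-degree coverings (as studied by Cremona--Fisher--Stoll and others), and lifting it appears to require genuinely new input before the Bhargava--Shankar machinery can even be deployed.
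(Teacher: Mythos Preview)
The statement you are attempting to prove is a \emph{conjecture}, not a theorem, and the paper makes no attempt to prove it. It is stated as \Cref{conj:PoonenRains}, attributed to Poonen--Rains, and the paper only uses it conditionally (see \S\ref{subsec:conjectures of PR}): the known cases $p\in\{2,3,5\}$ due to Bhargava--Shankar are invoked as input, and the general case is explicitly assumed as a hypothesis in the conditional versions of the main results. There is therefore no ``paper's own proof'' to compare against.

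Your proposal is a reasonable survey of the Bhargava--Shankar method and an honest assessment of why it does not currently extend beyond $p=5$, but it is not a proof. You yourself identify the fatal gap: for $p\geq 7$ no suitable coregular representation is known, so the entire counting apparatus cannot even be set up. What you have written is a research programme with an acknowledged open problem at its core, not an argument that establishes the conjecture. If the task was to supply a proof of this statement, the correct response is that none is available for general $p$; the statement remains open.
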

\Cref{conj:PoonenRains} is known to be true already if $p\in\set{2,3,5}$, via the works of Bhargava and Shankar \cites{bhargava2Sel,bhargava3Sel,bhargava5Sel}, and indeed we use this to obtain our unconditional bounds.  More specifically, \Cref{conj:PoonenRains} predicts that for every prime number $p$ the average of $\dim_{\FF_p}\sel{p}(E/\QQ)$ is at most $\tfrac{p+1}{p}$.

In fact, for $p\in\set{2,3,5}$, Bhargava and Shankar proved that the conjectural average in \Cref{conj:PoonenRains} is also true in the family of all elliptic curves $E/\QQ$ satisfying finitely many congruence conditions (and indeed infinitely many, assuming some technical conditions).  In light of this, we do not think it unreasonable to expect the average in \Cref{conj:PoonenRains} to hold true for the family of all $E/\QQ$ which satisfy a fixed finite number of congruence conditions.  We mark this as a hypothesis for using later below: 
\begin{hypothesis}\label{hyp:PR for cong conds}
	Let $\tilde{\Epsilon}\subseteq \Epsilon$ be a subset defined by finitely many congruence conditions, and for every positive real number $X$ write $\tilde{\Epsilon}(X)=\Epsilon(X)\cap \tilde{\Epsilon}$.  For each prime number $p$, the average of $\#\sel{p}(E/\QQ)$ for $E\in\tilde{\Epsilon}(X)$ goes to $p+1$ as $X\to\infty$.
\end{hypothesis}

\begin{theorem}[\Cref{cor:p extensions of multiquadratics}, \Cref{cor:p=235 general MW result}]\label{INTRO THM: POONEN RAINS IMPLIES ALL P}
	Assuming \Cref{hyp:PR for cong conds}, the conclusions of \Cref{INTRO RANK THM,INTRO:general p MWlattice result,INTRO: full p selmer thm} hold for every prime number $p$. 
\end{theorem}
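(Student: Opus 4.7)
The plan is to trace through the proofs of \Cref{INTRO RANK THM,INTRO: full p selmer thm,INTRO:general p MWlattice result} and isolate the single input which forces the restriction $p \in \{2,3,5\}$, namely the theorems of Bhargava and Shankar \cites{bhargava2Sel,bhargava3Sel,bhargava5Sel} which establish the average value of $\#\sel{p}(E/\QQ)$ unconditionally for those primes. Since \Cref{conj:PoonenRains} is precisely the extension of that statement to arbitrary primes, substituting it into the existing arguments should upgrade each of the three results.

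The first step is to extract from \Cref{conj:PoonenRains} the averaged dimension bound
\begin{equation*}
\limsup_{X\to\infty}\frac{\sum\limits_{(A,B)\in\Epsilon(X)}\dim_{\FF_p}\sel{p}(E_{A,B}/\QQ)}{\#\Epsilon(X)}\leq \frac{p+1}{p}.
\end{equation*}
This follows from the elementary pointwise inequality $n \leq p^{n}/p$ valid for all integers $n\geq 0$ (trivial for $n=0$ and by induction on $n\geq 1$), applied to $n=\dim_{\FF_p}\sel{p}(E_{A,B}/\QQ)$, combined with \Cref{conj:PoonenRains} after averaging. This is exactly the shape in which the Bhargava--Shankar input feeds into the constants of \S\ref{subsec:bound shape}, matching the $\tfrac{p+1}{p}$ appearing in them.

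Next I would revisit the proofs of the three unconditional theorems with this bound replacing the unconditional $p\in\{2,3,5\}$ input. For \Cref{INTRO: full p selmer thm} with $F=\QQ$, one combines \Cref{INTRO: gal descent failure is bdd} with the averaged $\QQ$-Selmer estimate; no further change is needed. For $F$ multiquadratic, the argument further exploits the relation between $\sel{p}(E/F)$ and the $p$-Selmer groups of the quadratic twists $E_D$ over $\QQ$ for $D\in\mathcal{Q}(F)$ via the Weil restriction $\Res_{F/\QQ}E$ (see \S\ref{sec:WeilRes}); the factor $3^{5/2}\frac{p+1}{p}|D|^{5}$ in the explicit constant already has the shape of the Poonen--Rains bound $\tfrac{p+1}{p}$ applied to the twist subfamily $\{E_D\}$, with the remaining $|D|^5$ factor absorbing the discrepancy between the height of $E_{A,B}$ and the height of $E_D$. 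Substituting the conditional bound for arbitrary $p$ thus extends \Cref{INTRO: full p selmer thm} to every prime without altering the structure of the argument. \Cref{INTRO RANK THM} then follows from the usual inclusion $E(K)/pE(K)\subseteq \sel{p}(E/K)$, and \Cref{INTRO:general p MWlattice result} follows by dividing by $\dim_{\FF_p}(\Lambda/p\Lambda)^G$, exactly as in the unconditional proofs.

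The main point to verify is the compatibility of \Cref{conj:PoonenRains} with the reordering that occurs in the multiquadratic case: one must apply the conjecture to the twist family $\{E_D:(A,B)\in\Epsilon(X)\}$ ordered by the height of $E_{A,B}$ rather than by the height of $E_D$. Since the Weierstrass height of $E_D$ scales polynomially in $|D|$ relative to that of $E_{A,B}$, a dyadic decomposition compares the two averages at the cost of a bounded multiplicative constant, and this is what the $|D|^5$ factor in \S\ref{subsec:bound shape} encodes. This reordering argument is already used in the unconditional multiquadratic case, so with \Cref{conj:PoonenRains} in hand no additional work beyond direct substitution is required, yielding all three statements for every prime $p$.
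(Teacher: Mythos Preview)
Your proposal is correct and follows essentially the same approach as the paper: the only place the restriction $p\in\{2,3,5\}$ enters is through \Cref{prop:Bharghava Shankar}, and that proposition already contains the ``Moreover, assuming \Cref{conj:PoonenRains}\ldots'' clause (using the same estimate $p^r\geq pr$ you invoke), which then propagates through \Cref{prop:multiquadratic good char average}, \Cref{cor:p extensions of multiquadratics}, and \Cref{cor:p=235 general MW result} exactly as you describe.

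Two small remarks. First, the paper's bound in \Cref{prop:Bharghava Shankar} is $3^{5/2}\tfrac{p+1}{p}$ rather than $\tfrac{p+1}{p}$, the extra factor arising from converting between the ordering $\max\{4|A|^3,27B^2\}$ used in \cites{bhargava2Sel,bhargava3Sel,bhargava5Sel} and the ordering $\max\{|A|^3,B^2\}$ used here; your stronger bound is of course sufficient, but be aware that as stated \Cref{conj:PoonenRains} does not specify an ordering, so some care (or a citation of \Cref{prop:Bharghava Shankar}) is needed. Second, no dyadic decomposition is required in the multiquadratic step: the paper simply observes that if $(A,B)\in\Epsilon(X)$ then the twist $E_{A,B}^{(D)}$ is isomorphic to some $E_{A',B'}$ with $(A',B')\in\Epsilon(D^6X)$, and then uses $\#\Epsilon(D^6X)\sim |D|^5\,\#\Epsilon(X)$ directly. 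Your description would work too, but the inclusion argument is cleaner.
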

\begin{rem}
	In fact, the work in \cite{Poonen_2012} predicts an exact summation for the average value of $\dim_{\FF_p}\sel{p}(E/F)$.  We have opted to work with the average size and then bound the average rank by elementary estimates so as to match up with what is currently known.  This does not affect the growth in $p$ of the bounds.
\end{rem}

\subsection{Bound Shape}\label{subsec:bound shape}
The bounds in \Cref{INTRO RANK THM,INTRO: gal descent failure is bdd,INTRO:general p MWlattice result,INTRO: full p selmer thm} all depend on the constants $C_p(K/F)$ and $C_p(F/\QQ)$.  We now comment on their behaviour.  Explicitly, for a prime number $p$ and finite Galois extension of number fields $K/F$,

\[C_p(K/F)=2 \omega_F(6p\Delta_{K})+[F:\QQ]+\delta_2(p)r_1(F) + 
		2\sum\limits_{\substack{\ell \textnormal{ prime}\\\ell\nmid 6p\Delta_{K}}}\omega_F(\ell)\frac{2\ell^8-\ell^7-1}{\ell^{10}-1},\]
where: $\delta_2(p)=1$ if $p=2$ and $\delta_2(p)=0$ otherwise; for an integer $n$, $\omega_F(n)$ is the number of prime ideals of $F$ which divide the ideal generated by $n$ over the integers of $F$; $r_1(F)$ is the number of real embeddings of $F$; and $\Delta_{K}$ is the discriminant of $K$.

This implies some asymptotic bounds for the growth in average ranks of elliptic curves over extension fields.  To ease notation somewhat, for each number field $K$ write
\[\avrk(K):=\limsup_{X\to\infty}\frac{1}{\#\Epsilon(X)}\sum_{(A,B)\in\Epsilon(X)}\rank E_{A,B}(K),\]
for the average rank of the $K$--points on elliptic curves $E/\QQ$.  Then for $K/F$ and $p$ as in \Cref{INTRO RANK THM}, 
	\begin{equation}\label{eq:INTRO RANK GROWTH}
	\avrk(K)\ll [K:\QQ]\omega_\QQ(\Delta_{K}),
	\end{equation}
where the implied constant is absolute.  Moreover, as in \Cref{INTRO THM: POONEN RAINS IMPLIES ALL P}, under the Poonen--Rains heuristics the same holds if we allow $p$ to be chosen from the set of all prime numbers.  

One example application is the growth of ranks in towers of number fields with restricted ramification.

\begin{example}[Fixed base field $F$]\label{example:rank asymptotic}
Let $F$ be $\QQ$ or a fixed multiquadratic number field, and let $F^{(p)}$ be a pro--$p$ extension of $F$ that ramifies only above rational primes in a fixed finite set $S$.  For each finite Galois extension $K/F$ such that $K\subseteq F^{(p)}$, write
\begin{align*}
\operatorname{Avrk}(K)&=\limsup_{X\to\infty}\frac{1}{\Epsilon(X)}\sum_{(A,B)\in\Epsilon(X)}\rank E(K).
\end{align*}
Then the asymptotic in \eqref{eq:INTRO RANK GROWTH} shows that
\[\operatorname{Avrk}(K)\ll [K:F],\]
that is, the average $K$-rank grows at most linearly in the degree of the extension.  In particular this holds if $F^{(p)}$ is the limit of a ray class field tower, or if $F^{(p)}$ is a $\ZZ_p$--extension.
\end{example}

\noindent We can also provide a uniform bound for average ranks over infinitely many extensions.

\begin{example}
	\Cref{INTRO RANK THM} implies that there are infinitely many $S_3$ number fields $K$ for which
	\begin{equation}\label{eq:dontordernumberfieldsbyECrank}
	\avrk(K)< 65.
	\end{equation}
	Indeed, for each prime number $\ell$ take $K_\ell$ to be the splitting field of $X^3-\ell$.  These are cubic extensions of their shared quadratic subfield $F=\QQ(\zeta_3)$, so we compute that if $\ell\equiv 2\mod 3$ then $C_3(K_\ell/F)\leq 8.44$; thus \eqref{eq:dontordernumberfieldsbyECrank} holds with $K=K_\ell$.
\end{example}
\begin{rem}
	Although we can often obtain uniform bounds for average ranks over infinitely many extensions with Galois group isomorphic to some fixed $G$, we cannot use these methods to obtain a bound which works for a positive proportion of such extensions.  Indeed, any sensible ordering of such extensions would see the number of ramified primes grow, which in turn causes our bound to grow.
\end{rem}

\subsection{Comparison to Iwasawa Theory}\label{subsec:Iwasawa}
Strict rank growth control has been observed and predicted for fixed elliptic curves in a few cases; we now show some examples of this and discuss the relationship with our results.  For the duration of this section, we fix a prime number $p$.  If $p\geq 7$ then we also assume \Cref{hyp:PR for cong conds}.  Recasting \Cref{INTRO RANK THM}, as in \Cref{example:rank asymptotic}, we obtain a bound for rank growth in $\ZZ_p$--extensions.

\begin{cor}\label{cor:Iwasawa extns}
	Let $F$ be $\QQ$ or a multiquadratic number field, and let $F_\infty/F$ be a $\ZZ_p$--extension.  For each integer $n\geq 1$, let $F_n$ be the intermediate field $F\subseteq F_n\subseteq F_\infty$ such that $\gal(F_n/F)\cong \ZZ/p^n\ZZ$.  Then for every integer $n\geq 1$
	\[\avrk(F_n)\ll p^n,\]
	where the implied constant is computable and depends only on the choice of base field $F$.
\end{cor}

We now compare this result with some conjectures and results in the literature for fixed elliptic curves.

Recall that the cyclotomic $\ZZ_p$--extension of a number field $F$ is the unique subfield $F_{p-\cyc}\subseteq \bigcup_{n\geq 1}F(\zeta_{p^n})$ such that $\gal(F_{p-\cyc}/F)\cong \ZZ_p$.  Work of Kato \cite{MR2104361} and Rohrlich \cite{MR735333} (see also \cite{MR1860044}*{Theorem 1.4}) shows the following: for each elliptic curve $E/\QQ$, there is an integer $C_{E}$ such that for all subfields $K\subseteq \QQ_{p-\cyc}$ we have $\rank(E(K))\leq C_{E}$

The author is not aware of any reason to expect these $C_{E}$ to be uniformly bounded across all $E/\QQ$, in fact there is substantial debate in the area on whether even the ranks of the rational points $E(\QQ)$ are even uniformly bounded (see \cite{MR3985613}*{\S3} for a historical survey). Moreover, prior to this work it appeared unclear whether, for example, the curves of height at most $X$ could have $C_{E}$ of order $\exp\exp(X)$ and typically attain said maximum at low levels of the tower $\QQ_{p-\cyc}/\QQ$.  \Cref{cor:Iwasawa extns} shows that the hypothetical behaviour of $C_{E}$ above cannot possibly occur.

Fix an imaginary quadratic field $F/\QQ$, then for a general $\ZZ_p$--extension $F_\infty/F$, there is the growth conjecture of Mazur \cite{MR804682}*{\S18}, as extended by Lei and Sprung \cite{MR4093883}*{Conjecture 1.2}, which claims:  if $E/\QQ$ has good reduction at $p$, and $F_\infty$ is not the anticyclotomic extension, then there is an integer $C_{E,F}$ such that for all intermediate fields $F_n$ (as in \Cref{cor:Iwasawa extns}), we ought to have $\rank(E(F_n))\leq C_{E,F}$.  As in the $\QQ_{p-\cyc}$ case above, \Cref{cor:Iwasawa extns}  shows that statistically this $C_{E,F}$ cannot behave wildly as $E/\QQ$ varies.  Moreover, this conjecture only accounts for the $E/\QQ$ with \textit{good reduction} at $p$, which excludes a positive proportion of elliptic curves.  \Cref{cor:Iwasawa extns} suggests that, at least on average, there should not be overly fast growth of ranks for the elliptic curves with bad reduction at $p$.

We now consider the case that $F_\infty/F$ is the anticyclotomic extension, which is characterised by its being dihedral over $\QQ$. The growth number proposition \cite{MR804682}*{\S18} shows that if $E/\QQ$ has good ordinary reduction at $p$, and the N\'eron fibre of $E$ is geometrically connected at every place $v\mid p$ at which the extension $F_\infty/F$ splits infinitely, then for each layer $F_n$ (as in \Cref{cor:Iwasawa extns}), we must have
\[\rank(E(F_n))=a(E,F_\infty/F)p^n+e_n(E),\]
where $\set{e_n(E)}_{n\geq 1}$ is a bounded sequence of integers associated to $E$, and $a(E, F_\infty/F)$ is a fixed growth constant (independent of $n$).  

The growth number conjecture of Mazur (\cite{MR804682}*{\S18 Growth Number Conjecture}) predicts that (for $E/\QQ$ as in the growth number proposition):
	\[a(E,F_\infty/F)=\begin{cases}
		0&\textnormal{if }w_E=1,\\
		1&\textnormal{if }w_E=-1\textnormal{ and }E\textnormal{ does not have CM by }F,\\
		2&\textnormal{if }w_E=-1\textnormal{ and }E\textnormal{ has CM by }F.
	\end{cases}\]
Note that the condition $w_E=-1$ is conjectured to hold for $50\%$ of $E/\QQ$, and is known to hold for at least $27.5\%$ of $E/\QQ$ by \cite{bhargava5Sel}*{Theorem 6}.  The additional stipulations on $E$ and its N\'eron model should again be positive proportion (and for large $p$ this proportion tends towards $100\%$). In particular $a(E,F_\infty/F)>0$ is expected to hold for a positive proportion of $E/\QQ$, and so we should expect from this conjecture that there is \textit{at least} linear growth of average ranks in the degree of the extension. That is, if $F_\infty/F$ is the anticyclotomic extension and for each $n\geq 1$, $F_n$ is the $n$th layer of $F_\infty/F$ as in \Cref{cor:Iwasawa extns}, we should expect
\[\avrk(F_n)\gg p^n.\]
\Cref{cor:Iwasawa extns} shows that, in fact, this is not just a lower bound but is the best possible asymptotic behaviour.

\subsection{Outline}
In \S\ref{sec:WeilRes} we review some well known properties of the Weil restriction of scalars.  We then use these properties to obtain bounds for average dimensions of Selmer groups in ``good characteristic'' over multiquadratic fields.

In \S\ref{sec:local copmputations} we compute local norm indices for elliptic curves over unramified extensions, extending work of Kramer \cite{kramer1981arithmetic}, which may be of independent interest.  

In \S\ref{sec:CoresSelmer} we recall and extend certain results and definitions from \cite{MorganPaterson2020} to the setting of interest, and define the genus theory invariant of an elliptic curve with respect to a Galois extension and prime number $p$, which will represent an upper bound for the size of the obstruction to Galois descent.  We then use this in \S\ref{sec:GalDescent} to obtain \Cref{INTRO: gal descent failure is bdd}.

Following this, in \S\ref{sec:Selmer Ranks} and \S\ref{sec:MW app} respectively, we use \Cref{INTRO: gal descent failure is bdd} to prove \Cref{INTRO: full p selmer thm} and \Cref{INTRO:general p MWlattice result}.  At the end of \S\ref{sec:MW app} we also provide a family of examples which generalise \Cref{ex:MWLatticeExample}.
\subsection{Limitations and Extensions}
Whilst our family of curves is that of all elliptic curves over $\QQ$, one should be able to use these methods to obtain similar results for similar sets of elliptic curves over a fixed number field ordered by height.  We have opted not to do this here, since to do so requires choosing a way to extend the definition of the set $\Epsilon$ from $\QQ$ to a number field.  If the ideal class group of this number field is non-trivial then there can be more than one such parametrisation, so the question of how to parametrise ``all elliptic curves'' is nuanced.

\subsection{Acknowledgements}
The author would like to thank Alex Bartel for countless helpful comments and suggestions.  We would also like to thank Efthymios Sofos for helpful comments on an earlier version of this paper.  We are grateful to the anonymous referee for their thorough reading of the article, and their insight on the flexibility of the works of Bhargava--Shankar, which prompted the choice of height in the article and also \Cref{rem:referee comment}.  Throughout this work, the author was supported by a PhD scholarship from the Carnegie Trust for the Universities of Scotland. 

\subsection{Notation and Conventions}
For a field $F$ of characteristic $0$, we write $\bar{F}$ for a (fixed once and for all) algebraic closure of $F$, and denote its absolute Galois group by $G_F=\gal(\bar{F}/F)$.  By a $G_F$-module $M$ we mean a discrete abelian group $M$ on which $G_F$ acts continuously, and for each $i\geq 0$ we write $H^i(F, M)$ as a shorthand for the continuous cohomology groups $H^i(G_F, M)$.  If moreover $M$ is $p$-torsion for some prime number $p$ then we say that $M$ is an $\FF_p[G_F]$-module, and for $V\subseteq H^i(F, M)$ we write $\dim V$ for the $\FF_p$-dimension of $V$.  For such $M$, we define the \textit{dual} of $M$ to be 
\[M^*:= \hom(M, \boldsymbol{\mu}_p),\]
where $\boldsymbol{\mu}_p$ is the $G_F$-module of $p$\textsuperscript{th} roots of unity in $\bar{F}$.  This is an $\FF_p[G_F]$-module with action given as follows:  for $\sigma\in G_F$, $\phi\in M^*$ and $m\in M$,
	\[{}^\sigma\phi(m)=\sigma\phi(\sigma^{-1}m).\]
For $i\geq 0$, if $L/F$ is a finite extension we denote the corresponding restriction and corestriction maps by
	\[\res_{L/F}:H^i(F, M)\to H^i(L,M)\]
and
	\[\cores_{L/F}:H^i(L, M)\to H^i(F,M),\]
respectively.

For a number field $F$, we write $\places_F$ for the set of places of $F$ and for each $v\in\places_F$ we write $F_v$ for the completion of $F$ at $v$.  For each $v\in\places_F$ we fix (once and for all) an embedding $\bar{F}\hookrightarrow\bar{F}_v$, and so an inclusion $G_{F_v}\subseteq G_F$.  Thus each $G_F$-module $M$ is naturally a $G_{F_v}$-module and moreover when $v$ is non-archimedean (finite), we denote by $F_v^{\nr}$ the maximal unramified extension of $F_v$, and write
	\[H^1_{\nr}(F_v, M)=\ker\left(H^1(F_v, M)\stackrel{\res}{\longrightarrow} H^1(F_v^{\nr}, M)\right)\]
for the subgroup of unramified classes.

For a number field $F$, an elliptic curve $E/F$ and a finite place $v\in\places_F$, when we describe the reduction type of $E$ at $v$ we are implicitly referring to the type of $E$ in the Kodaira--N\'eron classification (see e.g. \cite{silverman1994advanced}*{IV Theorem 8.2}).  

By an arithmetic function, we mean a function $f:\ZZ\to \CC$ such that for each $n\in\ZZ$ we have $f(-n)=f(n)$.  We denote by $\mu$ the M\"obius function and by $\gcd$ the greatest common divisor function, each extended from $\mathbb{N}$ to $\ZZ$ by composition with the archimedean absolute value.  For arithmetic functions $f$ and $g$, we denote by $f*g$ the Dirichlet convolution of the two, i.e. for each $n\in \ZZ$
	\[(f*g)(n):=\sum_{d\mid n}f(d)g(n/d),\]
where the sum is over positive divisors of $n$.  We say that an arithmetic function $f$ is multiplicative if for coprime integers $m,n\in \ZZ$ we have that $f(mn)=f(m)f(n)$.

For each prime number $\ell$ we write $v_\ell$ for the normalised valuation on $\QQ_\ell$, i.e. the unique valuation such that $v_\ell(\ell)=1$.

\section{Good Characteristic:  Weil Restriction}\label{sec:WeilRes}
For the duration of this section, fix a finite Galois extension of number fields $K/F$ and an elliptic curve $E/\QQ$, and write $G=\gal(K/F)$.  We begin in \S\ref{subsec:twists of EC} with expository material on twists of elliptic curves and the Weil restriction.  In \S\ref{subsec:selmer good char} we then go on to survey some results on $p$-Selmer groups in extensions of degree coprime to $p$.  This material is closely related to, and inspired by, that appearing in \cite{mazur2007finding}*{\S3}.  Finally, in \S\ref{subsec: avg selmer ranks in MQ extensions}, we explain how this material allows us to extend the results of Bhargava and Shankar \cites{bhargava3Sel,bhargava5Sel} on the average dimension of $3$- and $5$-Selmer groups over $\QQ$ to a bound for the average dimension of $3$- and $5$-Selmer groups over any multiquadratic number field.

\subsection{Twists of Elliptic Curves}\label{subsec:twists of EC}
As in Milne \cite{MR330174}*{\S2} (see also \cite{mazur2007twisting}), there is a general construction of twists of powers of an elliptic curve, which we now recall. 

\begin{definition}  \label{milnes forms}
Let $n\geq 1$. To each matrix  $M=(m_{i,j})$ in $\textup{Mat}_n(\ZZ)$ we can associate an endomorphism of $E^n$ given by
\[(P_1,...,P_n)\longmapsto \bigg(\sum_{j=1}^nm_{1,j}P_j , ..., \sum_{j=1}^nm_{n,j}P_j\bigg).\]
In this way we view $\textup{GL}_n(\ZZ)$ as a subgroup of $\textup{Aut}_{F}(E^n)$. Now suppose that $\Lambda$ is a free rank-$n$ $\ZZ$-module equipped with a continuous $G_F$-action. Choosing a basis for $\Lambda$ gives rise to a homomorphism
\[\rho_\Lambda: G_F\longrightarrow \textup{GL}_n(\mathbb{Z}),\]
which we view as a $1$-cocycle valued in $\textup{Aut}_{\bar{F}}(E^n)$. The class of $\rho_\Lambda$ in $H^1(F,\textup{Aut}_{\bar{F}}(E^n))$ does not depend on the choice of basis. Associated to this cocycle class is a twist of $E^n$, which we denote $\Lambda \otimes E$. This is an abelian variety over $F$ of dimension $n$, equipped with a $\bar{F}$-isomorphism 
$\varphi_\Lambda:E^n\rightarrow \Lambda \otimes E$
 satisfying $\varphi_\Lambda^{-1}\varphi_\Lambda^\sigma=\rho_\Lambda(\sigma)$
for all $\sigma \in G_F$. 
\end{definition}

The Weil restriction of $E$ can now be defined as a specific example of such a twist.

\begin{definition}\label{weil_restriction}
The Weil restriction of $E$ from $K$ to $F$ is the abelian variety 
\[\Res_{K/F}E=\ZZ[G]\otimes E.\]
\end{definition}

\begin{rem}
The Weil restriction $\textup{Res}_{K/F}E$ is classically defined as the unique scheme over $F$ representing the functor on $F$-schemes 
\[T\longmapsto E(T\times_{F}K).\]
As in \cite{mazur2007twisting}*{Theorem 4.1}, this is equivalent to the construction given above.
\end{rem}

\subsection{Selmer Groups in Good Characteristic}\label{subsec:selmer good char}
Here we remark on the structure of $n$-Selmer groups in the case that $n$ is coprime to $\#G$, the case of so--called ``good characteristic''.  In this case, the $n$-Selmer group splits as a sum over twists of $E$.  This can be viewed as a finite-level explication of the results in \cite{mazur2007finding}*{\S3}, where similar results are shown for Pontrjagin dual $p^{\infty}$--Selmer vector spaces without our restriction on $p$.

\begin{lemma}\label{lem:Weil restriction descent of Selmer}
	For every positive integer $n$, not necessarily coprime to $\#G$, 
	\begin{enumerate}[label=(\roman*)]
	\item\label{enum:WR1} there is a natural isomorphism of $\ZZ[G_F]$--modules
	\[\Res_{K/F}E[n]\cong \ZZ[G]\otimes_{\ZZ}E[n],\]
	where $\sigma\in G_F$ acts on the right hand side diagonally,
	\item\label{enum:WR2} the above isomorphism induces an isomorphism of $\ZZ[G]$--modules
	\[\sel{n}(\Res_{K/F}E/F)\cong \sel{n}(E/K),\]
	where the action of $G$ on the left hand side is induced by the action of $G$ on $\ZZ[G]$ by left multiplication.
	\end{enumerate}	
\end{lemma}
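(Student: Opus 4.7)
\emph{Proof plan.} For part (i), I would read the claim off directly from the twist formalism in Definition \ref{milnes forms}. For $\Lambda = \ZZ[G]$, there is a $\bar F$-isomorphism $\varphi_\Lambda: E^{|G|}\to\Res_{K/F}E$ satisfying the cocycle identity $\varphi_\Lambda^{-1}\varphi_\Lambda^\sigma=\rho_\Lambda(\sigma)$ for all $\sigma\in G_F$. Restricting to $n$-torsion and transporting the $G_F$-action across $\varphi_\Lambda$, one sees that for $P\in E^{|G|}[n]$ the action becomes $P\mapsto\rho_\Lambda(\sigma)\sigma(P)$; identifying $E^{|G|}[n]$ with $\ZZ[G]\otimes_\ZZ E[n]$ via the canonical basis of $\ZZ[G]$ puts this in the form of the diagonal action, where $G_F$ acts on the first factor through $G_F\twoheadrightarrow G$ by left multiplication. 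Equivalently, one may argue from the universal property $\Res_{K/F}E(R)=E(R\otimes_F K)$ applied to $R=\bar F$, using that $\bar F\otimes_F K\cong\prod_{g\in G}\bar F$ since $K/F$ is Galois.

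For part (ii), note first that part (i) identifies $\Res_{K/F}E[n]$ with the induced module $\ind_{G_K}^{G_F}E[n]$ (which agrees with $\coind_{G_K}^{G_F}E[n]$ since $[G_F:G_K]=|G|$ is finite). Shapiro's lemma supplies a canonical isomorphism
\[H^1(F,\Res_{K/F}E[n])\xrightarrow{\sim} H^1(K,E[n]),\]
and at each place $v\in\places_F$, local Shapiro (via Mackey decomposition relative to the inclusion $G_{F_v}\hookrightarrow G_F$ and the subgroup $G_K$) gives
\[H^1(F_v,\Res_{K/F}E[n])\xrightarrow{\sim}\prod_{w\mid v}H^1(K_w,E[n]).\]
Both identifications are $G$-equivariant once one equips the left-hand sides with the action coming from left (or right, as appropriate) multiplication of $G$ on $\ZZ[G]$, and the right-hand sides with the natural Galois action of $G=\gal(K/F)$.

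The crux is then to match the local Selmer conditions. Applying the universal property to the étale $F_v$-algebra $F_v\otimes_F K\cong\prod_{w\mid v}K_w$ yields
\[\Res_{K/F}E(F_v)\cong\prod_{w\mid v}E(K_w),\]
and naturality of the Kummer sequence shows that under the local Shapiro isomorphism the image of the local Kummer map $\Res_{K/F}E(F_v)\otimes\ZZ/n\ZZ\hookrightarrow H^1(F_v,\Res_{K/F}E[n])$ coincides with the product of the local Kummer images $E(K_w)\otimes\ZZ/n\ZZ\hookrightarrow H^1(K_w,E[n])$. Intersecting the global Shapiro identification with the product of these matching local conditions then produces the desired isomorphism $\sel{n}(\Res_{K/F}E/F)\cong\sel{n}(E/K)$ of $\ZZ[G]$-modules. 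The only real obstacle is bookkeeping: pinning down $G$-equivariance of Shapiro and verifying compatibility of the local Kummer maps under the étale decomposition, via a careful choice of coset representatives for $G_K\backslash G_F$ and $G_K\backslash G_F/G_{F_v}$. An analogous statement at the level of Pontryagin duals of $p^\infty$-Selmer groups is treated in \cite{mazur2007finding}*{\S3} and serves as a good model for the argument.
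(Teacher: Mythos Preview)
Your proposal is correct and follows essentially the same route as the paper: part (i) is read off from the twist formalism (the paper cites \cite{mazur2007twisting}*{Theorem 2.2(ii)} for this), and part (ii) is Shapiro's lemma globally together with a check that the local Selmer conditions match, exactly as in \cite{mazur2007finding}*{proof of Proposition 3.1(iii)}. The only difference is one of detail: the paper dispatches the local compatibility with ``it is then elementary to check that this isomorphism commutes with the corresponding isomorphisms at the local extensions,'' whereas you spell out how this goes via the \'etale decomposition $F_v\otimes_F K\cong\prod_{w\mid v}K_w$ and Mackey/local Shapiro.
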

\begin{proof}
	\ref{enum:WR1} is found in \cite{mazur2007twisting}*{Theorem 2.2(ii)}, see also \cite{MR330174}*{\S1(a)}.	For \ref{enum:WR2}, we give an analogous argument to that in \cite{mazur2007finding}*{proof of Proposition 3.1(iii)}, see also \cite{MR330174}*{Proof of Theorem 1} for a similar result for Shafarevich--Tate groups.  Indeed, by \ref{enum:WR1}, Shapiro's lemma (see, e.g. \cite{neukirch2013class}*{Theorem 4.9}) provides a $\ZZ[G]$ isomorphism
	\[H^1(F, (\Res_{K/F}E)[n])\cong H^1(K, E[n]),\]
	where the action of $G$ on the left hand side is induced by left multiplication on $\ZZ[G]$ in the isomorphism of $\ref{enum:WR1}$.	 It is then elementary to check that this isomorphism commutes with the corresponding isomorphisms at the local extensions, and thus restricts to one of Selmer groups.
\end{proof}

\begin{definition}
	Let $\rho$ be an irreducible finite dimensional $\QQ[G]$--module.  As in \cite{mazur2007twisting}*{Definition 4.3} we define the twist of $E$ by $\rho$ to be
	\[E_{\rho}=(\QQ[G]_\rho\cap \ZZ[G])\otimes E,\]
	where $\QQ[G]_\rho$ is the $\rho$-isotypic component of $\QQ[G]$, that is, the sum of all left ideals of $\QQ[G]$ isomorphic to $\rho$.
\end{definition}
\begin{example}\label{ex:multiquadratic extensions}
	If $K/F$ is multiquadratic then these twists are extremely concrete.  Since $G$ is an elementary abelian $2$ group, its irreducible representations are order $2$ characters induced by the quadratic subextensions.  Let $\Delta\in F$ be an element such that $F(\sqrt{\Delta})\subseteq K$, and let $\chi_\Delta$ be the corresponding at--most--quadratic character of $G_F$.  Identifying $\chi_\Delta$ with its corresponding one dimensional $\QQ[G]$-module, this construction gives rise to all irreducible finite-dimensional $\QQ[G]$-modules.  Moreover, it is clear that $\QQ[G]_{\chi_\Delta}\cap \ZZ[G]$ is a rank one free abelian group with action of $\sigma\in G$ given by multiplication by $\chi_\Delta(\sigma)$.  In particular, by \cite{mazur2007twisting}*{Theorem 2.2(i)} we obtain that $E_{\chi_\Delta}=E^{(\Delta)}$ is just the usual quadratic twist of $E$ by $\Delta$.
\end{example}

We can then split the $n$-Selmer group of the Weil restriction into those of these twists.  This result is analogous to \cite{mazur2007finding}*{Corollary 3.7}, where they study the Pontrjagin dual Selmer vector spaces.
\begin{proposition}\label{prop:Selmer Descent splits in good char}
	If $n$ is an integer which is coprime to $\#G$, then we have an isomorphism of $\ZZ[G]$-modules
	\[\sel{n}(E/K)\cong \bigoplus_{\rho}\sel{n}(E_\rho/F),\]
	where the sum is over isomorphism classes of irreducible finite dimensional $\QQ[G]$-modules and the action of $G$ on the summands on the right hand side is induced by the action of $G$ on $\QQ[G]_\rho\cap \ZZ[G]$ via the isomorphism in \Cref{lem:Weil restriction descent of Selmer}\ref{enum:WR1}.
\end{proposition}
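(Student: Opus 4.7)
The plan is to use \Cref{lem:Weil restriction descent of Selmer}\ref{enum:WR2} to reduce the claim to exhibiting a $G$-equivariant decomposition of $\sel{n}(\Res_{K/F}E/F)$ as a direct sum of the $\sel{n}(E_\rho/F)$, and then to obtain this decomposition by constructing a natural $F$-isogeny $\bigoplus_\rho E_\rho \to \Res_{K/F}E$ of degree coprime to $n$.

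To produce this isogeny, I would first observe that since $n$ is coprime to $\#G$, Maschke's theorem gives a decomposition of left $\QQ[G]$-modules $\QQ[G]=\bigoplus_\rho \QQ[G]_\rho$, so the inclusion
\[\iota: \bigoplus_\rho \bigl(\QQ[G]_\rho\cap \ZZ[G]\bigr)\hookrightarrow \ZZ[G]\]
of $\ZZ[G]$-lattices has finite cokernel. The stronger fact that the ring $\ZZ[\tfrac{1}{\#G}][G]$ is semisimple shows that this cokernel is annihilated by some power of $\#G$, and in particular has order coprime to $n$.

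I would then verify that the twist construction of \Cref{milnes forms} commutes with direct sums of $\ZZ[G_\QQ]$-lattices---namely, $(\Lambda_1\oplus \Lambda_2)\otimes E$ is naturally $F$-isomorphic to $(\Lambda_1\otimes E)\times (\Lambda_2\otimes E)$---and that an injection of such lattices with finite cokernel induces an $F$-isogeny whose degree divides a power of the order of the cokernel. Applying this to $\iota$ and recalling the $G$-equivariance supplied by left multiplication on $\ZZ[G]$ and its submodules $\QQ[G]_\rho \cap \ZZ[G]$, I would obtain a $G$-equivariant $F$-isogeny
\[\phi:\bigoplus_\rho E_\rho \longrightarrow \Res_{K/F}E\]
of degree coprime to $n$. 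I expect this to be the main obstacle: the bookkeeping to ensure that the $G$-actions on both sides really are identified by $\iota$ is delicate, and is best carried out at the level of $1$-cocycles in $H^1(F,\aut_{\bar{F}}(E^{[K:F]}))$, mirroring the approach of \cite{mazur2007finding}*{Corollary 3.7}.

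To conclude, since $\phi$ has degree coprime to $n$, it induces an isomorphism of $G_F$-modules on $n$-torsion, and hence (by the standard comparison of Selmer groups under isogenies of degree coprime to $n$) a $G$-equivariant isomorphism on $n$-Selmer groups. Composing this with the isomorphism of \Cref{lem:Weil restriction descent of Selmer}\ref{enum:WR2} gives the required $\ZZ[G]$-module isomorphism.
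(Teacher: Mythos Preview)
Your proposal is correct and follows essentially the same route as the paper: reduce via \Cref{lem:Weil restriction descent of Selmer}\ref{enum:WR2}, use the inclusion $\bigoplus_\rho(\QQ[G]_\rho\cap\ZZ[G])\hookrightarrow\ZZ[G]$ with $\#G$-power cokernel to produce an $F$-isogeny of degree coprime to $n$, and conclude. The paper simply cites \cite{mazur2007twisting}*{Theorem 4.5, Lemma 2.4} for the functoriality of the twist construction and the degree bound that you plan to verify directly, and it handles the $G$-equivariance in one line by noting that $f_E$ is an $F$-isogeny; your more careful bookkeeping is not wrong, just more explicit than necessary.
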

\begin{proof}
	By \Cref{lem:Weil restriction descent of Selmer} we need only show that $\sel{n}(\Res_{K/F}E/F)$ splits in this way.  The natural map
	\[f:\bigoplus_{\rho}\left(\ZZ[G]\cap \QQ[G]_\rho\right)\to \ZZ[G],\]
	is injective with finite cokernel, so by \cite{mazur2007twisting}*{Theorem 4.5, see also Lemma 2.4} induces an $F$--isogeny
	\[f_E:\bigoplus_{\rho}E_\rho\to \Res_{K/F}E.\]
	Moreover, since the cokernel of $f$ is $\#G$--torsion, the degree of the isogeny $f_E$ must be a divisor of some power of $\#G$ \cite{mazur2007twisting}*{proof of Lemma 2.4} and so coprime to $n$. In particular, $f_E$ induces an isomorphism of $n$--Selmer groups, and moreover since $f_E$ is an $F$--isogeny the isomorphism is one of $\ZZ[G]$--modules.
\end{proof}

\begin{rem}
	In \cite{mazur2007finding}*{Corollary 3.7} the authors do not need to make assumptions about coprimality, since the error that occurs when $p\mid \#G$ contributes an additional torsion module to the $p^\infty$-Selmer groups.  This in turn vanishes when taking the tensor product with $\QQ_p$ to form the Pontrjagin dual Selmer vector space $\hom(\sel{p^\infty}(E/K), \QQ_p/\ZZ_p)\otimes\QQ_p$.
\end{rem}

\subsection{Average Selmer Ranks in Good Characteristic over Multiquadratic Fields}\label{subsec: avg selmer ranks in MQ extensions}  In this subsection, we will restrict our interest to multiquadratic number fields.  We use the Weil restriction as in \S\ref{sec:WeilRes} to give a bound for Selmer ranks in good characteristic using results of Bhargava and Shankar \cites{bhargava3Sel,bhargava5Sel}.  First, we adapt the results of Bhargava--Shankar for quadratic twists.
\begin{proposition}\label{prop:actually Bharghava Shankar}
	For each squarefree integer $D$ and $p\in\set{2,3,5}$, we have
	\[\lim_{X\to\infty}\frac{\sum_{(A,B)\in\Epsilon(X)}\#\sel{p}(E_{A,B}^{(D)}/\QQ)}{\#\Epsilon(X)}=(p+1).\]
	Moreover, assuming \Cref{hyp:PR for cong conds} the conclusion holds for every prime number $p$.
\end{proposition}
\begin{rem}\label{rem:referee comment}
	For $p\in\set{2,3,5}$ this can be seen directly from the methods of Bhargava--Shankar \cite{bhargava2Sel}, since the quadratic twist $E^{(D)}_{A,B}$ has a (possibly not minimal)Weierstrass equation given by $E_{AD^2,BD^3}:y^2=x^3+AD^2x+BD^3$, and their proofs never make use of the minimality condition and work with finitely many congruence conditions.  However, for completeness (and when $p>5$) we provide a proof below.
\end{rem}
\begin{proof}
	Fix a squarefree integer $D$.  Since the quadratic twist $E^{(D)}_{A,B}$ has a (possibly not minimal) Weierstrass equation given by $E_{AD^2,BD^3}:y^2=x^3+AD^2x+BD^3$.  Thus there is a bijection between $\set{E_{A,B}^{(D)}~:~(A,B)\in\Epsilon(X)}$ and the set
	\[\Epsilon_D(X)=\set{(A,B)\in\ZZ^2~:~\substack{
	4\abs{A}^3, 27B^2\leq D^6X;\\
	D^2\mid A,\,D^3\mid B;\\
	4A^3+27B^2\neq 0;\\
	\forall \ell\nmid D \textnormal{ prime, if }\ell^4\mid A \textnormal{ then } \ell^6\nmid B;\\
	\forall \ell\mid D \textnormal{ prime, if }\ell^6\mid A \textnormal{ then } \ell^9\nmid B
	}},\]
	given by identifying $(A,B)\in\Epsilon_D(X)$ with the curve $E_{A,B}$.  We now partition $\Epsilon_D(X)$ into parts, so as to identify with minimal Weierstrass models.  For each pair $(d_1,d_2)$ of positive squarefree integers such that $D=\pm d_1d_2$, we define
	\[\Epsilon_{d_1,d_2}(X)=\set{(A,B)\in\Epsilon(D^6X)~:~\substack{
	4\abs{A}^3, 27B^2\leq \left(\frac{d_1}{d_2}\right)^6X;\\
	4A^3+27B^2\neq 0;\\
	\forall \ell\nmid d_1d_2 \textnormal{ prime, if }\ell^4\mid A \textnormal{ then } \ell^6\nmid B;\\
	\forall \ell\mid d_1 \textnormal{ prime: } \ell^2\mid A,\ \ell^3\mid B\textnormal{, and if } \ell^4\mid A \textnormal{ then } \ell^6\nmid B;\\
	\forall \ell\mid d_2 \textnormal{ prime: if } \ell^2\mid A \textnormal{ then } \ell^3\nmid B.
	}}.\]
	Note that $\Epsilon_{d_1,d_2}(X)\subseteq \Epsilon\left((\tfrac{d_1}{d_2})^6X\right)$, and moreover $\Epsilon_{d_1,d_2}(X)$ parametrises a large family of elliptic curves ordered by na\"ive height in the sense of Bhargava--Shankar \cites{bhargava2Sel,bhargava3Sel,bhargava5Sel}.  Further, we have that
	\[\Epsilon_D(X)=\bigsqcup_{D=\pm d_1d_2}\set{(d_2^4A, d_2^6B)~:~(A,B)\in\Epsilon_{d_1,d_2}(X)},\]
	where the disjoint union is over pairs of squarefree positive integers $d_1,d_2$ satisfying $D=\pm d_1d_2$.  

	Note that for any fixed pair of squarefree positive integers $d_1,d_2$ we have by \cite{bhargava2Sel}*{Theorem 3.17} that
	\begin{align*}
		\lim_{X\to\infty}\frac{\#\Epsilon_{d_1,d_2}(X)}{\#\Epsilon(X)}&
		=
		\left(\prod_{\substack{\ell\mid d_1\\\textnormal{prime}}}\frac{(\ell^2-1)\ell^3+(\ell^3-1)}{\ell^{10}-1}\right)\left(\prod_{\substack{\ell\mid d_2\\\textnormal{prime}}}\frac{\ell^2(\ell^2-1)\ell^6+\ell^2(\ell^3-1)\ell^3}{\ell^{10}-1}\right)
		\\&=d_2^5\left(\prod_{\substack{\ell\mid D\\\textnormal{prime}}}\frac{\ell^5-1}{\ell^{10}-1}\right).
	\end{align*}
	Thus, 
	\begin{align*}
		&\lim_{X\to\infty}\frac{1}{\#\Epsilon(X)}\sum_{(A,B)\in\Epsilon(X)}\#\sel{p}(E_{A,B}^{(D)}/\QQ)\\
		&=\lim_{X\to\infty}\frac{1}{\#\Epsilon(X)}\sum_{(A,B)\in\Epsilon_D(X)}\#\sel{p}(E_{A,B}/\QQ)\\
		&=\lim_{X\to\infty}\frac{1}{\#\Epsilon(X)}\sum_{D=\pm d_1d_2}\sum_{(A,B)\in\Epsilon_{d_1,d_2}(X)}\#\sel{p}(E_{A,B}/\QQ)\\
		&=(p+1)\left(\prod_{\substack{\ell\mid D\\\textnormal{prime}}}\frac{\ell^5-1}{\ell^{10}-1}\right)\sum_{d\mid D}d^5\\
		&=(p+1),
	\end{align*}
	where the penultimate equality follows from the large family average Selmer group sizes in \cites{bhargava2Sel,bhargava3Sel,bhargava5Sel} and the computation above, and the final follows from an elementary identity for power-of-divisor sums.

	Assuming \Cref{hyp:PR for cong conds}, since the families $\Epsilon_{d_1,d_2}(X)$ are defined by finitely many congruence conditions, the argument above holds for all prime numbers $p$.
\end{proof}

We can then relate this to our ordering via elementary estimates, and similarly obtain a bound for the average Selmer rank.
\begin{proposition}\label{prop:Bharghava Shankar}
	For each squarefree integer $D$ and $p\in\set{2,3,5}$, we have
	\[\limsup_{X\to\infty}\frac{\sum_{(A,B)\in\Epsilon(X)}\dim\sel{p}(E_{A,B}^{(D)}/\QQ)}{\#\Epsilon(X)}\leq\frac{(p+1)}{p}.\]
	Moreover, assuming \Cref{hyp:PR for cong conds} the same is true for every prime number $p$.
\end{proposition}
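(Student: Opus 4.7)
The plan is to combine the cited theorems of Bhargava--Shankar (respectively, \Cref{conj:PoonenRains}) with two elementary manipulations: converting average size to average dimension, and translating between height orderings.

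First, I would recall from \cites{bhargava2Sel,bhargava3Sel,bhargava5Sel} that for $p\in\set{2,3,5}$ the average of $\#\sel{p}(E_{A,B}/\QQ)$, taken over $\Epsilon$ ordered by the Bhargava--Shankar height function $H_{BS}$, is exactly $p+1$; \Cref{conj:PoonenRains} gives the analogous statement for all primes $p$.

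Second, to pass from sizes to dimensions I would apply the elementary inequality $p\cdot d\leq p^d$ for integers $d\geq 0$ and primes $p\geq 2$ (an easy induction on $d$, with equality at $d=1$), which pointwise yields
\[\dim_{\FF_p}\sel{p}(E_{A,B}/\QQ)\leq \tfrac{1}{p}\#\sel{p}(E_{A,B}/\QQ).\]
Averaging in the $H_{BS}$ ordering bounds the average dimension above by $\tfrac{p+1}{p}$.

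Third, to transfer this to the paper's ordering I would note that $H_{BS}\leq C\cdot H$ for some explicit constant $C$, so $\Epsilon(X)\subseteq\Epsilon_{BS}(CX)$, where $\Epsilon_{BS}(Y)$ denotes the set of $(A,B)\in\Epsilon$ with $H_{BS}(E_{A,B})\leq Y$. Since the dimensions are non-negative,
\[\frac{1}{\#\Epsilon(X)}\sum_{(A,B)\in\Epsilon(X)}\dim_{\FF_p}\sel{p}(E_{A,B}/\QQ)\leq \frac{\#\Epsilon_{BS}(CX)}{\#\Epsilon(X)}\cdot\frac{1}{\#\Epsilon_{BS}(CX)}\sum_{(A,B)\in\Epsilon_{BS}(CX)}\dim_{\FF_p}\sel{p}(E_{A,B}/\QQ).\]
Standard asymptotics of the shape $\sim c\cdot Y^{5/6}$ for both counting functions give an explicit limit for the first factor on the right-hand side, and combining with the bound from step 2 produces the claimed constant $3^{5/2}\cdot\tfrac{p+1}{p}$.

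The only real obstacle is extracting the exact constant $3^{5/2}$ in step 3: one must consult the precise height normalization in each of \cites{bhargava2Sel,bhargava3Sel,bhargava5Sel} and track the leading asymptotic constants of the two counting functions carefully enough to verify that the limiting ratio works out to $3^{5/2}$. The rest of the argument is formal.
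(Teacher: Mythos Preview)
Your proposal is correct and follows essentially the same route as the paper: convert average size to average dimension via the elementary inequality $p\cdot d\le p^d$, then compare the two height orderings by an inclusion $\Epsilon(X)\subseteq\Epsilon'(27X)$ (the paper's $\Epsilon'$ uses $\max\{4A^3,27B^2\}$) and use the $X^{5/6}$ asymptotics for both counting functions, so that the ratio tends to $27^{5/6}=3^{5/2}$.
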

\begin{proof}
	For each $r\geq 0$ we use the inequality $p^r\geq pr$, so for each $E/\QQ$ we have $\dim\sel{p}(E/\QQ)\leq \#\sel{p}(E/\QQ)/p$.  Thus it follows from \Cref{prop:actually Bharghava Shankar}.
\end{proof}

\begin{definition}
	For a squarefree integer $D$, we write $\chi_D$ for the quadratic character of $G_\QQ$ cutting out $\QQ(\sqrt{D})$, and for an abelian group $M$ we write $M^{\chi_D}$ for the discrete $G_\QQ$-module $M$ with action by $\sigma\in G_\QQ$ given by multiplication by $\chi_D(\sigma)\in\set{\pm 1}$.
\end{definition}

\begin{lemma}\label{lem:decomposition of selmer in good char over MQ}
	Let $F$ be a field contained in a multiquadratic number field, write $G=\gal(F/\QQ)$, and let $E/\QQ$ be an elliptic curve.  Then for every odd prime number $p$ there is an isomorphism of $\ZZ[G]$-modules
	\[\sel{p}(E/F)\cong \bigoplus_{D\in\mathcal{Q}(F)}\sel{p}(E^{(D)}/\QQ)^{\chi_D},\]
	where $\mathcal{Q}(F)$ is the set of squarefree integers $D$ such that $\QQ(\sqrt{D})\subseteq F$ and $E^{(D)}$ is the quadratic twist of $E$ by $D$.
\end{lemma}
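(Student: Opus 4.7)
The plan is to apply \Cref{prop:Selmer Descent splits in good char} in the special setting where $K = F$, the base field is $\QQ$, and $n = p$. Since $F$ is contained in a multiquadratic number field, $G = \gal(F/\QQ)$ is an elementary abelian $2$-group, so $[F:\QQ]$ is a power of $2$. Because $p$ is odd, $\gcd(p,\#G)=1$, placing us squarely in the good-characteristic regime and making the proposition applicable.

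First, I would invoke \Cref{prop:Selmer Descent splits in good char} to obtain a $\ZZ[G]$-module isomorphism
\[\sel{p}(E/F) \cong \bigoplus_\rho \sel{p}(E_\rho/\QQ),\]
where $\rho$ runs over the isomorphism classes of irreducible finite-dimensional $\QQ[G]$-modules. Next, I would classify these representations: since $G$ is elementary abelian of exponent $2$, every irreducible $\QQ[G]$-representation is one-dimensional and is cut out by an (at most) quadratic character of $G_\QQ$ that factors through $G$. These characters are precisely the $\chi_D$ for $D\in\mathcal{Q}(F)$, so the indexing set in the decomposition matches the one in the statement.

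Then I would identify each summand. By \Cref{ex:multiquadratic extensions}, for the character $\rho = \chi_D$ we have $E_\rho = E^{(D)}$, the usual quadratic twist. The proposition further asserts that the $G$-action on the $\rho$-summand is the one induced from the natural $G$-action on $\QQ[G]_\rho\cap \ZZ[G]$ through the isomorphism of \Cref{lem:Weil restriction descent of Selmer}\ref{enum:WR1}. In the present case this intersection is a rank-one free $\ZZ$-module on which $G$ acts through the character $\chi_D$, so the induced action on $\sel{p}(E^{(D)}/\QQ)$ is precisely twisting by $\chi_D$; this yields the summand $\sel{p}(E^{(D)}/\QQ)^{\chi_D}$.

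There is no real obstacle here; the lemma is essentially a direct specialisation of the general Weil-restriction decomposition to the multiquadratic setting. The only point requiring care is bookkeeping the $G$-action on each summand so as to match the notation $(-)^{\chi_D}$, and noting that oddness of $p$ is exactly what is needed to apply \Cref{prop:Selmer Descent splits in good char} given that $\#G$ is a power of $2$.
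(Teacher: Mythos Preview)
Your proposal is correct and follows exactly the same approach as the paper, which simply says the lemma follows by applying \Cref{prop:Selmer Descent splits in good char} to multiquadratic extensions as in \Cref{ex:multiquadratic extensions}. You have spelled out in full the details that the paper leaves implicit, including the classification of irreducible $\QQ[G]$-modules and the bookkeeping of the $G$-action on each summand.
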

\begin{proof}
This follows by applying \Cref{prop:Selmer Descent splits in good char} to multiquadratic extensions as in \Cref{ex:multiquadratic extensions}.
\end{proof}

Now we can state an easy statistical consequence of the decomposition in \Cref{lem:decomposition of selmer in good char over MQ}.
\begin{proposition}\label{prop:multiquadratic good char average}
	Let $F$ be either $\QQ$ or a multiquadratic number field.  Then for $p\in\set{3,5}$,
	\[\limsup_{X\to\infty}\frac{\sum\limits_{(A,B)\in\Epsilon(X)}\dim\sel{p}(E_{A,B}/F)}{\#\Epsilon(X)}\leq \frac{p+1}{p}[F:\QQ],\]
	Moreover, assuming \Cref{hyp:PR for cong conds} the same holds for all odd prime numbers $p$.
\end{proposition}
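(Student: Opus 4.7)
The plan is to combine the Galois-module decomposition of Lemma~\ref{lem:decomposition of selmer in good char over MQ} with the Bhargava--Shankar bound of Proposition~\ref{prop:Bharghava Shankar}, after rewriting the average over twists as an average over untwisted curves ordered by a rescaled height.

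First, since $p$ is odd, Lemma~\ref{lem:decomposition of selmer in good char over MQ} gives an isomorphism of abelian groups
\[\sel{p}(E_{A,B}/F)\cong \bigoplus_{D\in\mathcal{Q}(F)}\sel{p}(E_{A,B}^{(D)}/\QQ),\]
so taking $\FF_p$-dimensions and summing over $(A,B)\in\Epsilon(X)$ reduces the problem to bounding, for each fixed $D\in\mathcal{Q}(F)$, the quantity
\[\frac{1}{\#\Epsilon(X)}\sum_{(A,B)\in\Epsilon(X)}\dim_{\FF_p}\sel{p}(E_{A,B}^{(D)}/\QQ).\]

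Next, I would reparametrise the family of twists. The twist $E_{A,B}^{(D)}$ admits the (typically non-minimal) Weierstrass model $y^2=x^3+AD^2x+BD^3$, whose height equals $|D|^{6}H(E_{A,B})$. Each such curve has a unique minimal model in $\Epsilon$, and the resulting map $\Epsilon\to\Epsilon$ is injective (two distinct elements of $\Epsilon$ give non-isomorphic curves, and quadratic twisting by a fixed $D$ is invertible). Since passing to a minimal model can only decrease the height, this map sends $\Epsilon(X)$ into $\Epsilon(|D|^{6}X)$. Consequently
\[\sum_{(A,B)\in\Epsilon(X)}\dim_{\FF_p}\sel{p}(E_{A,B}^{(D)}/\QQ)\leq \sum_{(A,B)\in\Epsilon(|D|^{6}X)}\dim_{\FF_p}\sel{p}(E_{A,B}/\QQ).\]

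Finally, I would combine these ingredients asymptotically. The count $\#\Epsilon'(X)\sim cX^{5/6}$ recalled in the proof of Proposition~\ref{prop:Bharghava Shankar} (together with the sandwiching $\Epsilon'(X)\subseteq\Epsilon(X)\subseteq\Epsilon'(27X)$) yields $\#\Epsilon(|D|^{6}X)/\#\Epsilon(X)\to |D|^{5}$ as $X\to\infty$. Dividing the previous display by $\#\Epsilon(X)$, taking $\limsup$, and invoking Proposition~\ref{prop:Bharghava Shankar} gives, for each $D\in\mathcal{Q}(F)$,
\[\limsup_{X\to\infty}\frac{1}{\#\Epsilon(X)}\sum_{(A,B)\in\Epsilon(X)}\dim_{\FF_p}\sel{p}(E_{A,B}^{(D)}/\QQ)\leq |D|^{5}\cdot 3^{5/2}\frac{p+1}{p}.\]
Summing over the finite set $\mathcal{Q}(F)$ yields the claimed bound, and the conditional extension to all odd primes under Conjecture~\ref{conj:PoonenRains} is immediate from the same step.

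The only genuinely delicate point is the reparametrisation of twists: verifying that the map to minimal models is injective and does not increase height is elementary but has to be done carefully, and one must also check that the counting asymptotic $\#\Epsilon(X)\sim cX^{5/6}$ suffices to control the ratio $\#\Epsilon(|D|^{6}X)/\#\Epsilon(X)$ uniformly enough to pass through the $\limsup$. Everything else is a direct concatenation of Lemma~\ref{lem:decomposition of selmer in good char over MQ} and Proposition~\ref{prop:Bharghava Shankar}.
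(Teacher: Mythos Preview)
Your proposal is correct and follows essentially the same route as the paper: decompose via Lemma~\ref{lem:decomposition of selmer in good char over MQ}, observe that twisting by $D$ sends $\Epsilon(X)$ injectively into $\Epsilon(|D|^6X)$, and then apply Proposition~\ref{prop:Bharghava Shankar} together with the asymptotic $\#\Epsilon(X)\sim cX^{5/6}$. The paper is slightly terser about the reparametrisation step (it simply asserts that $E_{A,B}^{(D)}\cong E_{A',B'}$ for some $(A',B')\in\Epsilon(D^6X)$), but your more explicit justification via minimal models and injectivity is a welcome expansion of exactly the point the paper leaves implicit.
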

\begin{proof}
Let $p$ be an odd prime number.  Using the decomposition in \Cref{lem:decomposition of selmer in good char over MQ},
\[\dim\sel{p}(E/F)=\sum_{D\in Q(F)}\dim \sel{p}(E^{(D)}/\QQ),\]
where $E^{(D)}$ is the quadratic twist of $E$ by $D$ and $\mathcal{Q}(F)$ is the set of squarefree integers $D$ such that $\QQ(\sqrt{D})\subseteq F$.  The result now follows from \Cref{prop:Bharghava Shankar}, noting that the size of $\mathcal{Q}(F)$ is precisely $[F:\QQ]$.
\end{proof}

\section{Local Computations}\label{sec:local copmputations}
In this section, let $F, \OO_F, v$ be a finite extension of $\QQ_\ell$ for some prime number $\ell$, its ring of integers and normalised valuation, respectively and let $E/F$ be an elliptic curve with multiplicative reduction.  Moreover, let $K/F$ be an unramified extension, let $n$ be its degree and write $N_{K/F}=\sum_{g\in\gal(K/F)}g\in\ZZ[\gal(K/F)]$ for the usual norm element.  We perform some local computations, extending results of Kramer \cite{kramer1981arithmetic} in the case $n=2$.  Specifically, we determine the norm index for such $E$ using the Tate parametrisation (see e.g. \cite{silverman1994advanced}*{V\S3-5}), the properties of which we recall below.

Recall from \cite{silverman1994advanced}*{V Thms 3.1 and 5.3} that there is a unique element $q\in \OO_F$ with $v(q)>0$ such that $E$ is isomorphic over $\bar{F}$ to $\mathbb{G}_m/q^\ZZ$.  We call $q$ the Tate parameter associated to $E$, and fix such an isomorphism and call it the Tate parametrisation.  Moreover, if $E$ has split multiplicative reduction, then we may assume that the Tate parametrisation is defined over $F$.

Let $L/F$ be the unramified quadratic extension, and for each extension $M/F$ define
\begin{align*}
	I(M)&:=\set{x\in (M\cdot L)^\times~:~N_{(M\cdot L)/M}(x)\in q^\ZZ},\\
	I_0(M)&:=\set{x\in (M\cdot L)^\times~:~ N_{(M\cdot L)/M}(x)=1}.
\end{align*}
If $E$ has non-split multiplicative reduction, then the quadratic twist of $E$ by $L$ has split multiplicative reduction, so we may assume that the Tate parametrisation is defined over any field containing $L$.  However, for a finite extension $M/F$ which does not contain $L$, by \cite{silverman1994advanced}*{V Cor. 5.4} the Tate parametrisation over the compositum $M\cdot L$ yields an isomorphism between $E(M)$ and $I(M)/q^\ZZ$.  This isomorphism identifies $E_0(M)$, the points of the connected component of the identity in the N\'eron model of $E$, with $I_0(M)/q^\ZZ$.

\begin{lemma}\label{lem:q is delta_E}
	If $E/F$ has split multiplicative reduction, then the corresponding Tate parameter $q$ satisfies
	\[v(q)=v(\Delta_E),\]
	where $\Delta_E$ is a minimal discriminant for $E/F$.
\end{lemma}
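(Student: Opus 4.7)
My plan is to deduce the equality by computing both sides via the explicit Tate parametrisation, and then comparing with the $j$-invariant.

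First, I would recall that since $E/F$ has split multiplicative reduction, the Tate parametrisation produces an explicit Weierstrass equation for $E$ over $F$ of the form
\[y^2 + xy = x^3 + a_4(q) x + a_6(q),\]
where the coefficients $a_4(q), a_6(q) \in \OO_F$ are given by convergent power series in $q$ (see \cite{silverman1994advanced}*{V \S3}). The key inputs from this construction are the classical product formulas
\[\Delta(q) = q\prod_{n\geq 1}(1-q^n)^{24}, \qquad j(q) = \frac{1}{q} + 744 + \sum_{n\geq 1}c_n q^n,\]
where $\Delta(q)$ is the discriminant of the above Weierstrass equation.

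Since $q\in\OO_F$ with $v(q) > 0$, every factor $1 - q^n$ is a unit in $\OO_F$, so we immediately obtain $v(\Delta(q)) = v(q)$ and $v(j(q)) = -v(q)$. Let $\Delta_E$ denote a minimal discriminant of $E/F$; the discriminant of the Tate equation differs from $\Delta_E$ by a $12$th power in $F^\times$, so $v(\Delta_E) \equiv v(q) \pmod{12}$, and it suffices to pin down $v(\Delta_E)$ exactly.

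Here the main (really only) point is to invoke the Kodaira--N\'eron classification: because $E/F$ has multiplicative reduction, $v(j(E)) < 0$, and the Kodaira type is $I_m$ with $m = v(\Delta_E) = -v(j(E))$ (see e.g. \cite{silverman1994advanced}*{IV Thm.~9.4}). Combining this with $v(j(E)) = v(j(q)) = -v(q)$ computed above gives $v(\Delta_E) = v(q)$, as required.

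There is no real obstacle here: the result is essentially a bookkeeping exercise once one has the product formula for $\Delta(q)$ and the standard relation $v(\Delta_E) = -v(j(E))$ in the multiplicative reduction case. I would keep the proof to at most a few lines, citing \cite{silverman1994advanced} for the formulas rather than reproducing them.
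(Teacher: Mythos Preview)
Your argument is correct. The paper's proof is a one-liner: it cites the product formula $\Delta_E = q\prod_{n\geq 1}(1-q^n)^{24}$ from \cite{silverman1994advanced}*{V Thm~3.1(b)} and concludes immediately, since each factor $1-q^n$ is a unit. Implicit in this is that the Tate Weierstrass equation is already a minimal model, so that its discriminant is $\Delta_E$ on the nose rather than merely up to a $12$th power. You instead decline to assert minimality and close the gap via the $j$-invariant together with the Kodaira--N\'eron relation $v(\Delta_E) = -v(j)$ for type $I_m$; this is perfectly valid but strictly more work than necessary. The most economical version of your argument would be to observe that the Tate model has integral coefficients and reduces to $y^2+xy=x^3$, which has a node, so the model is already minimal---and then stop after computing $v(\Delta(q))=v(q)$.
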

\begin{proof}
	By \cite{silverman1994advanced}*{V Thm 3.1(b)} we have $\Delta_E=q\prod_{n\geq 1}(1-q^n)^{24}$, so the result is immediate.
\end{proof}

\begin{proposition}\label{prop:split mult norm index}
	If $E/F$ has split multiplicative reduction, then 
	\[E(F)/N_{K/F}E(K)\cong \ZZ/\gcd(v(\Delta_E),n)\ZZ,\]
	where $\Delta_E$ is a minimal discriminant for $E/F$.
\end{proposition}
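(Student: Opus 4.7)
The approach is to reduce the computation to $\mathbb{G}_m$ via the Tate parametrisation and then do a short exercise in cyclic cohomology (or equivalently a direct computation with norms).

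Since $E/F$ has split multiplicative reduction, the Tate parametrisation $\bar{F}^\times/q^\ZZ \cong E(\bar{F})$ is defined over $F$, hence is $G_F$-equivariant. Restricting to $G_K$-invariants, since $K/F$ is unramified (so $K\cdot L = K$ is its own compositum with the unramified quadratic extension, and the ``$I$'' notation simplifies), we obtain a $G$-equivariant isomorphism $E(K)\cong K^\times/q^\ZZ$ under which the summation map $N_{K/F}\colon E(K)\to E(F)$ corresponds to the multiplicative norm $N_{K/F}\colon K^\times\to F^\times$ (noting that $N_{K/F}(q)=q^n$ so this descends to the quotients). Thus
\[E(F)/N_{K/F}E(K)\ \cong\ F^\times \big/ \bigl(N_{K/F}(K^\times)\cdot q^\ZZ\bigr).\]

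The plan for the remainder is to compute the right-hand side using the valuation. For the unramified extension $K/F$ of degree $n$, local class field theory (or just the explicit description: units are norms, and a uniformiser of $F$ remains a uniformiser of $K$ with $N_{K/F}(\pi_F)=\pi_F^n$) gives that $v_F$ induces an isomorphism $F^\times/N_{K/F}(K^\times)\xrightarrow{\sim}\ZZ/n\ZZ$. Quotienting further by the image of $q^\ZZ$, which under $v_F$ is the subgroup $v(q)\ZZ/n\ZZ\subseteq\ZZ/n\ZZ$, yields
\[F^\times\big/\bigl(N_{K/F}(K^\times)\cdot q^\ZZ\bigr)\ \cong\ \ZZ/\gcd\bigl(v(q),n\bigr)\ZZ.\]
Finally, applying \Cref{lem:q is delta_E} to replace $v(q)$ with $v(\Delta_E)$ gives the claimed formula.

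Alternatively, one can repackage the computation through Tate cohomology of the cyclic group $G$: the short exact sequence of $G$-modules $0\to q^\ZZ\to K^\times\to E(K)\to 0$ yields a six-term sequence, in which $\tate^1(G,q^\ZZ)=\hom(G,\ZZ)=0$ and $\tate^0(G,K^\times)\cong\ZZ/n\ZZ$ by unramifiedness, while the connecting map $\tate^0(G,q^\ZZ)=\ZZ/n\ZZ\to\ZZ/n\ZZ$ is multiplication by $v(q)$. The cokernel $\tate^0(G,E(K))=E(F)/N_{K/F}E(K)$ then has the stated shape.

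The only real verification required is that the Tate parametrisation is genuinely $G$-equivariant in the split case (so that the norm on $E(K)$ corresponds to the multiplicative norm on $K^\times$); this is where the split hypothesis is used essentially, and it is precisely the step that will need to be revisited carefully in the non-split case via the sets $I(M)$ and $I_0(M)$ defined above. Beyond that, the proof is a two-line valuation calculation combined with \Cref{lem:q is delta_E}.
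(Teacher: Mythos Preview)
Your proof is correct and follows essentially the same route as the paper: Tate parametrisation over $F$, reduction to $F^\times/(N_{K/F}(K^\times)\cdot q^\ZZ)$, then the valuation computation via local class field theory for unramified extensions, and finally \Cref{lem:q is delta_E}. Two harmless slips: the parenthetical ``$K\cdot L=K$'' is false when $n$ is odd (but irrelevant here, since in the split case the parametrisation is already defined over $F$ and $L$ plays no role), and in your alternative the map $\tate^0(G,q^\ZZ)\to\tate^0(G,K^\times)$ is induced by the inclusion rather than being a connecting map.
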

\begin{proof}
	If $E$ has Tate parameter $q\in\OO_F$ then, since the Tate parametrisation is defined over $F$, we have a commutative square
	\[\begin{tikzcd}
	E(K)\arrow[r, "\sim"]\arrow[d, "N_{K/F}"]&K^\times/q^\ZZ\arrow[d, "N_{K/F}"]\\
				E(F)\arrow[r, "\sim"]&F^\times/q^\ZZ,
				\end{tikzcd}\]
	and so
	\[E(F)/N_{K/F}(E(K))\cong F^\times/\left(N_{K/F}(K^\times)\cdot q^\ZZ\right).\]
	Since the extension $K/F$ is unramified, local class field theory identifies the exact sequence
	
	\[\begin{tikzcd}
		0\arrow[r]&\frac{q^\ZZ}{q^\ZZ\cap N(K^\times)}\arrow[r]&\frac{F^\times}{N_{K/F}(K^\times)}\arrow[r]&\frac{F^\times}{\left(N_{K/F}(K^\times)\cdot q^\ZZ\right)}\arrow[r]&0,
	\end{tikzcd}\]
	with
	\[\begin{tikzcd}
		0\arrow[r]&\gp{v(q)}\arrow[r]&\ZZ/n\ZZ\arrow[r]&\ZZ/\gcd(v(q),n)\ZZ\arrow[r]&0
	\end{tikzcd}.\]
	The result now follows from \Cref{lem:q is delta_E}.
\end{proof}

\begin{proposition}\label{prop:nonsplit mult norm index even}
	If $E/F$ has non-split multiplicative reduction and $n\in 2\ZZ$, then 
	\[\#\left(E(F)/N_{K/F}E(K)\right)= \begin{cases}
		2&\textnormal{if }v(\Delta_E)\in 2\ZZ,\\
		1&\textnormal{else}.
	\end{cases}\]
\end{proposition}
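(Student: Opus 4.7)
The plan is to reduce to the split multiplicative case via quadratic twist and then use Hilbert 90 together with local class field theory. Since $n$ is even and $K/F$ is unramified, the unramified quadratic extension $L$ lies inside $K$. Let $E'/F$ be the quadratic twist of $E$ by $L$; then $E'$ has split multiplicative reduction, shares the Tate parameter $q\in\OO_F$ with $E$ (since $j(E)=j(E')$), and $v(q)=v(\Delta_{E'})=v(\Delta_E)$ by \Cref{lem:q is delta_E} together with the fact that the quadratic twist by the unramified character does not alter the valuation of the discriminant.

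Since $L\subseteq K$, the Tate parametrisation of $E'$ is defined over $K$, yielding identifications $E(K)=E'(K)\cong K^{\times}/q^{\ZZ}$ and $E(F)\cong I(F)/q^{\ZZ}$. Next I would unpack the norm map $N_{K/F}:E(K)\to E(F)$ through these identifications. Writing $H=\gal(K/L)$ and fixing a lift $\tau\in\gal(K/F)$ of the non-trivial element of $\gal(L/F)$, the twist relation between the actions on $E$ and on $E'$ yields, for $u\in K^\times$,
\[
N_{K/F}(u) = \prod_{\sigma\in H}\sigma(u)\cdot \prod_{\sigma\in \tau H}\sigma(u)^{-1} = \frac{N_{K/L}(u)}{\tau\bigl(N_{K/L}(u)\bigr)}.
\]
In particular, the image of $N_{K/F}$ in $L^\times/q^\ZZ$ is the image of the set $\set{w/\tau(w):w\in N_{K/L}(K^\times)}$.

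I would then show that this set equals $I_0(F)$, the norm-one subgroup of $L^\times$. By Hilbert 90 one has $I_0(F)=\set{w/\tau(w):w\in L^\times}$, and the map $w\mapsto w/\tau(w)$ has kernel $F^\times$, so the claim reduces to the equality $L^\times=N_{K/L}(K^\times)\cdot F^\times$. This is immediate from local class field theory: since $K/L$ is unramified, $N_{K/L}(K^\times)=\pi_L^{(n/2)\ZZ}\cdot\OO_L^\times$ (norms are surjective on units in unramified extensions), and multiplying by $F^\times=\pi_L^{\ZZ}\cdot\OO_F^\times$ recovers all of $L^\times$.

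It then remains to compute $I(F)/(I_0(F)\cdot q^\ZZ)$. Since $I_0(F)=\ker(N_{L/F}\colon I(F)\to q^\ZZ)$, one obtains
\[
\frac{I(F)}{I_0(F)\cdot q^\ZZ}\;\cong\;\frac{N_{L/F}(I(F))}{q^{2\ZZ}}.
\]
Using $N_{L/F}(L^\times)=\pi_F^{2\ZZ}\cdot\OO_F^\times$, the intersection $N_{L/F}(L^\times)\cap q^\ZZ$ equals $q^\ZZ$ if $v(q)$ is even and $q^{2\ZZ}$ if $v(q)$ is odd. Combining with $v(q)=v(\Delta_E)$ gives the stated dichotomy. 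The main bookkeeping obstacle is tracking how the twist converts $N_{K/F}^E$ into the expression $N_{K/L}(u)/\tau(N_{K/L}(u))$; once that identification is made, the remainder is a clean application of Hilbert 90 and class field theory.
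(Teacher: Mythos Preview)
Your proof is correct and follows essentially the same route as the paper: identify $E(K)\cong K^\times/q^\ZZ$ and $E(F)\cong I(F)/q^\ZZ$ via the Tate parametrisation, compute that the transported norm is the $\chi_L$--twisted product $\prod_\sigma (\sigma x)^{\chi_L(\sigma)}$, and then use Hilbert~90 together with surjectivity of the norm on units in unramified extensions to reduce to $(N_{L/F}(L^\times)\cap q^\ZZ)/q^{2\ZZ}$. The only cosmetic difference is that the paper factors the twisted norm as $x\mapsto x/\tau(x)$ followed by $N_{K/L}$ (invoking Hilbert~90 for $K/F$), whereas you factor it as $N_{K/L}$ followed by $w\mapsto w/\tau(w)$ (invoking Hilbert~90 for $L/F$ and the identity $L^\times=N_{K/L}(K^\times)\cdot F^\times$); since $\gal(K/F)$ is abelian these factorisations coincide and the two arguments are interchangeable.
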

\begin{proof}
By assumption $E$ has split multiplicative reduction over the unramified quadratic extension $L/F$, which is contained in $K$.  Write $\tau\in \gal(K/F)$ for the Frobenius element, so that $N_{K/F}=\sum_{k=0}^{n-1}\tau^k$, and $L/F$ is the fixed field of the group generated by $\tau^2$.  The Tate parametrisation of $E/K$ gives a commutative diagram
\[\begin{tikzcd}
	E(K)\arrow[d,"{N_{K/F}}"]\arrow[r, "\sim"]&K^\times/q^\ZZ\arrow[d, dashed, "\alpha"]\\
	E(F)\arrow[r, "{\sim}"]&I(F)/q^\ZZ,
\end{tikzcd}\]
where since the norm map $N_{K/F}$ factors through the field $L$ over which the Tate parametrisation is defined, the rightmost vertical map $\alpha$ is induced by the action of the element $\sum_{k=0}^{n-1}\chi_L(\tau^k)\tau^k$, where $\chi_L$ is the quadratic character cutting out the extension $L/F$.  Note that for $x\in K^\times/q^\ZZ$
\[\alpha(x)=\prod_{k=1}^{n/2}\frac{\tau^{2k}(x)}{\tau^{2k+1}(x)}=\prod_{k=1}^{n/2}\tau^{2k}\left(\frac{x}{\tau(x)}\right)=N_{K/L}\left(\frac{x}{\tau({x})}\right).\]
Thus, since by Hilbert's theorem 90 we have
\[\set{\frac{x}{\tau(x)}~:~x\in K^\times}=\ker(N_{K/F}:K^\times\to F^\times),\]
we obtain that
\begin{align*}
		E(F)/N_{K/F}(E(K))&\cong \frac{I(F)}{N_{K/L}(\ker(N_{K/F}))\cdot q^\ZZ}
		\\&\cong \frac{N_{L/F}(L^\times)\cap q^\ZZ}{q^{2\ZZ}},
\end{align*}
where since the norm map is surjective on units in unramified extensions, in particular $\ker (N_{L/F})\cap{I(F)}\subseteq N_{K/L}(\ker(N_{K/F}))$, the final isomorphism is just obtained by pushing through the map $N_{L/F}$.  It is then clear that the size of this norm index is at most $2$, and is $2$ precisely when $q$ is a norm from $L$, which by local class field theory occurs precisely when $v(q)$ is even.  The result then follows from \Cref{lem:q is delta_E}.
\end{proof}
\begin{proposition}\label{prop:nonsplit mult norm index odd}
	If $E/F$ has non-split multiplicative reduction, and $n$ is odd then
	\[\#\left(E(F)/N_{K/F}E(K)\right)=1.\]
\end{proposition}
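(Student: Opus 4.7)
The plan is to deduce this cleanly from Proposition \ref{prop:nonsplit mult norm index even} via the coprimality $\gcd(n,2)=1$, rather than by a direct Tate parametrisation computation. Since $n$ is odd, the extension $K/F$ does not contain the unramified quadratic extension $L/F$ over which $E$ acquires split multiplicative reduction, so I would pass to the compositum $M := KL$. Because $K/F$ and $L/F$ are both subextensions of the maximal unramified extension of $F$ of coprime degrees, $K \cap L = F$ and $M/F$ is unramified of degree $2n$; in particular $2n$ is even and $E/F$ still has non-split multiplicative reduction, so Proposition \ref{prop:nonsplit mult norm index even} applies to $M/F$ and yields
\[
\#\bigl(E(F)/N_{M/F} E(M)\bigr) \leq 2.
\]

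Next I would use transitivity of the norm in the Galois tower $F \subseteq K \subseteq M$: since $N_{M/F} = N_{K/F} \circ N_{M/K}$, the image $N_{M/F}(E(M))$ is contained in the image $N_{K/F}(E(K))$. Consequently $E(F)/N_{K/F} E(K)$ is a quotient of $E(F)/N_{M/F} E(M)$, and hence has order at most $2$.

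On the other hand, writing $G = \gal(K/F)$, the group $E(F)/N_{K/F} E(K) = \hat{H}^0(G, E(K))$ is annihilated by $|G| = n$, as holds for Tate $\hat{H}^0$ of any cyclic group of order $n$ (for $x \in E(K)^G$ one has $N_{K/F}x = nx$). Since $n$ is odd, this group has odd exponent. A finite abelian group of order at most $2$ with odd exponent is trivial, giving the claim.

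There is no real obstacle: the substantive work has already been done in Proposition \ref{prop:nonsplit mult norm index even}, and the odd-degree case reduces to it by a coprimality argument. An alternative direct approach via the Tate parametrisation over $M$, mirroring the proof of Proposition \ref{prop:nonsplit mult norm index even}, would be possible but substantially more laborious, since $E$ is now only split over the degree-$2n$ field $M$ and the twisted norm $\prod_{i=0}^{n-1} \phi^i(x)^{(-1)^i}$ would have to be analysed explicitly; the above coprimality reduction bypasses this entirely.
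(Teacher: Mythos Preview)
Your argument is correct and genuinely different from the paper's proof. The paper argues directly: using the Tate parametrisation over the compositum $K\cdot L$ (where $L/F$ is the unramified quadratic extension), it constructs a surjection $U_{K\cdot L}\to E_0(K)$ compatible with the norm maps, deduces that $N_{K/F}$ is surjective onto $E_0(F)$, and then uses that the Tamagawa number is at most $2$ together with the odd-order observation (exactly your final step) to conclude.

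Your reduction bypasses the explicit Tate-parametrisation computation entirely by invoking the already-proved even case for $M=K\cdot L$ and the transitivity $N_{M/F}=N_{K/F}\circ N_{M/K}$; the remaining ``order $\leq 2$ and odd exponent'' step is shared with the paper. This is slicker and makes the logical dependence on \Cref{prop:nonsplit mult norm index even} transparent. The paper's approach, by contrast, is self-contained and yields a little more along the way (surjectivity of the norm onto the identity component $E_0(F)$), which could be useful if one wanted finer control, but for the stated proposition your route is shorter.
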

\begin{proof}
	Let $\chi_L$ be the character associated to the unramified quadratic extension $L/F$ and write $\gal(K\cdot L/F)=\gp{\tau~:~\tau^{2n}=1}$. Letting $U$ denote units, we consider the map $f$ given by the composition
	\[\begin{tikzcd}
	U_{K\cdot L}\arrow[r, "\tilde{f}"]&I_0(K)\arrow[r, "Q"]&E_0(K),
	\end{tikzcd}\]
	where for $u\in U_{K\cdot L}$ we set $\tilde{f}(u):=\frac{u}{\tau^n(u)}$ and $Q$ is the Tate parametrisation map.  By Hilbert's theorem 90 and the fact that the extension $K\cdot L/K$ is unramified, the map $\tilde{f}$ is surjective and so since $Q$ is also surjective we must have that $f$ is a surjection.  Moreover for each $u\in U_{K\cdot L}$,
	\begin{align*}
	f(u)&=Q\left(\frac{u}{\tau^n(u)}\right)
	\\&=Q(u)-Q\left(\tau^n(u)\right)
	\\&=Q(u)-\chi_L(\tau^n)\tau^n\left(Q(u)\right)
	\\&=Q(u)+\tau^n\left(Q(u)\right)
	\\&=N_{K\cdot L/K}(Q(u)).
	\end{align*}
	Identifying $N_{K/F}=\sum_{k=0}^n\tau^{2k}$, we obtain a commutative square
	\[\begin{tikzcd}
	U_{K\cdot L}\arrow[r, "f"]\arrow[d, "N_{K/F}"]&E_0(K)\arrow[d, "N_{K/F}"]\\
				U_{L}\ar[r, "f"]&E_0(F).
				\end{tikzcd}\]
	In particular, the right hand vertical map is now a surjection since the left is by local class field theory.  This then means that $E_0(F)=N_{K/F}E_0(K)\subseteq N_{K/F}E(F)$, so in particular we have a natural surjection
	\[E(F)/E_0(F)\twoheadrightarrow E(F)/N_{K/F}E(K).\]
	Since $E$ has non-split multiplicative reduction, so has Tamagawa number $1$ or $2$, we must have that $E(F)/N_{K/F}E(K)$, which has odd order as it is a quotient of $E(F)/nE(F)$, is trivial.
\end{proof}

\noindent Our main application of the above results will be when $E/F$ has reduction type $I_1$.
\begin{lemma}\label{lem:I1reduction norm is surj}
	If $E/F$ has reduction type $I_1$, then the norm is a surjection
	\[N_{K/F}:E(K)\twoheadrightarrow E(F).\]
\end{lemma}
\begin{proof}
	By Tate's algorithm (see \Cref{lem:TatesAlg}), if $E/F$ has reduction type $I_1$ then $v(\Delta_E)=1$.  Thus the claim follows from Propositions \ref{prop:split mult norm index}, \ref{prop:nonsplit mult norm index even}, and \ref{prop:nonsplit mult norm index odd}.
\end{proof}

\section{The (Co)-Restriction Selmer Groups}\label{sec:CoresSelmer}
We will now review the properties of Selmer structures and their associated Selmer groups, before going on to extend some definitions and basic results from \cite{MorganPaterson2020}*{\S4}.  More details on Selmer structures can be found in \cites{WashingtonGaloisCohom,MazurRubinKolyvaginSystems} and the references therein. 

For the duration of this section let $F$ be a number field, $K/F$ be a finite Galois extension and $G$ be its Galois group.  Moreover, let $E/F$ be an elliptic curve and $p$ be a prime number.

\begin{definition}
	A \textit{Selmer structure} $\mathcal{L}=\{\mathcal{L}_v\}_v$ for a finite $\FF_p[G_F]$-module $M$ is a collection of subgroups
	\[\mathcal{L}_v\subseteq H^1(F_v, M),\]
	one for each $v\in \places_F$, such that $\mathcal{L}_v=H^1_{\nr}(F_v, M)$ for all but finitely many $v$.  The associated \textit{Selmer group} $\sel{\mathcal{L}}(F, M)$ is defined by the exactness of the sequence
	\[0\to \sel{\mathcal{L}}(F, M)\to H^1(F, M)\to \prod_{v\in\places_F} H^1(F_v, M)/\mathcal{L}_v.\]
	For each $v\in\places_F$ we write $\mathcal{L}_v^*$ for the orthogonal complement of $\mathcal{L}_v$ with respect to the local Tate pairing, so that $\mathcal{L}_v^*\subseteq H^1(F_v, M^*)$.  We then define the \textit{dual Selmer structure} $\mathcal{L}^*$ for $M^*$ by taking $\mathcal{L}^*=\set{\mathcal{L}_v^*}$, and refer to $\sel{\mathcal{L}^*}(F, M^*)$ as the \textit{dual Selmer group}.
\end{definition}
The following theorem describes the difference in dimension between a Selmer group and its dual.
\begin{theorem}[Greenberg--Wiles] \label{wiles greenberg formula}
Let $\mathcal{L}=\{\mathcal{L}_v\}_v$ be a Selmer structure for a finite $\FF_p[G_F]$-module $M$. Then we have
\[\dim \textup{Sel}_{\mathcal{L}}(F,M)-\dim \textup{Sel}_{\mathcal{L}^*}(F,M^*)=\dim M^{G_F}-\dim (M^*)^{G_F} +\sum_{v\in\places_F}(\dim\mathcal{L}_v- \dim M^{G_{F_v}}).\]
\end{theorem}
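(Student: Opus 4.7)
My plan is to prove the Greenberg--Wiles formula by the standard Poitou--Tate duality argument. The first step is to combine local Tate duality with the nine-term Poitou--Tate exact sequence for the pair $(M, M^*)$ to produce the four-term exact sequence
\begin{equation*}
0 \to \sel{\mathcal{L}}(F,M) \to H^1(F,M) \to \prod_{v\in\places_F} H^1(F_v, M)/\mathcal{L}_v \to \sel{\mathcal{L}^*}(F,M^*)^\vee \to 0,
\end{equation*}
where the last map is defined as follows: local Tate duality identifies $H^1(F_v,M)/\mathcal{L}_v$ with the $\FF_p$-linear dual of $\mathcal{L}_v^*$, and the connecting map sends a family of local classes to the functional on $\sel{\mathcal{L}^*}(F,M^*)$ given by the sum of local Tate pairings. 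Exactness in the middle is the reciprocity statement that a global class in $H^1(F,M)$ lies in the image of the preceding map if and only if all its local components pair trivially with $\sel{\mathcal{L}^*}(F,M^*)$, which is precisely the content of the Poitou--Tate sequence; surjectivity on the right follows similarly.

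Taking $\FF_p$-dimensions and rearranging the resulting alternating sum gives
\begin{equation*}
\dim\sel{\mathcal{L}}(F,M) - \dim\sel{\mathcal{L}^*}(F,M^*) = \dim H^1(F,M) - \sum_{v\in\places_F}\bigl(\dim H^1(F_v,M) - \dim\mathcal{L}_v\bigr).
\end{equation*}
To match the right-hand side of the claimed identity I would substitute in the global Euler characteristic formula of Tate, which expresses $\dim H^0(F,M) - \dim H^1(F,M) + \dim H^2(F,M)$ as a sum of archimedean local contributions, together with global duality $\dim H^2(F,M) = \dim H^0(F,M^*) = \dim (M^*)^{G_F}$, and the local Euler characteristic formula at each place $v$, which computes $\dim H^0(F_v,M) - \dim H^1(F_v,M) + \dim H^2(F_v,M)$. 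Using local Tate duality once more to rewrite $\dim H^2(F_v,M) = \dim M^{*G_{F_v}}$ makes the local contributions collapse termwise, and the remaining global terms assemble into $\dim M^{G_F} - \dim(M^*)^{G_F}$, yielding the claimed identity.

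The main obstacle is the bookkeeping: several local terms must cancel precisely, and archimedean places need careful handling because their local Euler characteristics differ from the non-archimedean ones. A sanity check along the way is that, although the Selmer structure $\mathcal{L}$ is indexed by the infinite set $\places_F$, one has $\dim \mathcal{L}_v = \dim H^1_{\nr}(F_v,M) = \dim M^{G_{F_v}}$ at almost all $v$ (unramified places away from $p$), so both the displayed sum in the theorem statement and the sum appearing in my derivation are genuinely finite; this justifies the term-by-term manipulations.
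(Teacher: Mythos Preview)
The paper does not prove this theorem; it simply quotes \cite{wiles1995modular}*{Prop.~5.1(b)} and \cite{WashingtonGaloisCohom}*{Theorem 2}. Your outline follows the standard Poitou--Tate argument found in those references, but as written it has a genuine gap.

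The displayed four-term sequence has infinite-dimensional middle terms. The group $H^1(F,M)$ is typically infinite (for instance $H^1(F,\boldsymbol{\mu}_p)\cong F^\times/(F^\times)^p$), so ``taking $\FF_p$-dimensions and rearranging'' produces $\infty-\infty$. Your sanity check does not rescue this: at an unramified place $v\nmid p$ the local Euler characteristic formula together with local Tate duality gives $\dim H^1(F_v,M)-\dim H^1_{\nr}(F_v,M)=\dim(M^*)^{G_{F_v}}$, and by Chebotarev this is nonzero for infinitely many $v$ (indeed it equals $\dim M$ at every $v$ splitting completely in the extension trivialising $M$). The identity $\dim H^1_{\nr}(F_v,M)=\dim M^{G_{F_v}}$ you quote shows only that the sum in the \emph{statement} of the theorem is finite, not the sum $\sum_v\bigl(\dim H^1(F_v,M)-\dim\mathcal{L}_v\bigr)$ appearing in your intermediate formula. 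Likewise, there is no global Euler characteristic formula expressing $\dim H^0(F,M)-\dim H^1(F,M)+\dim H^2(F,M)$ as a finite quantity; that formula is a statement about $G_{F,S}$-cohomology.

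The repair, which is what the cited references actually do, is to fix a finite set $S$ containing the archimedean places, the places above $p$, the places where $M$ ramifies, and the places where $\mathcal{L}_v\neq H^1_{\nr}(F_v,M)$, and work throughout with the finite groups $H^i(G_{F,S},M)$ and sums over $v\in S$. The Poitou--Tate nine-term sequence and Tate's global Euler characteristic are then available, and the dimension count you outline (with an extra $H^1(G_{F,S},M^*)$ term entering the alternating sum) goes through.
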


\begin{proof}
This follows from \cite{wiles1995modular}*{Prop 5.1(b)}.  See also \cite{WashingtonGaloisCohom}*{Theorem 2}.
\end{proof}
\begin{rem}
	Note that $E[p]$ is naturally an $\FF_p[G_F]$-module and the Weil pairing induces an $\FF_p[G_F]$-isomorphism $E[p]\cong E[p]^*$.  Making this identification, the local Tate pairing at a place $v\in\places_F$ becomes an alternating bilinear pairing on $H^1(F_v, E[p])$, and the two global terms on the right hand side of \Cref{wiles greenberg formula} cancel.
\end{rem}
We firstly define notation for the Selmer structure associated to the usual $p$-Selmer group.
\begin{definition}
	For each finite extension $L/F$ and every place $v\in \places_L$, we denote by $\SSS_v(L;E)$ the image of the coboundary map
	\[\delta_v:E(L_v)/pE(L_v)\hookrightarrow H^1(L_v, E[p]),\]
	arising from the short exact sequence of $G_{L_v}$-modules
	\[\begin{tikzcd}0\arrow[r] &E[p]\arrow[r] &E\arrow[r,"p"] &E\arrow[r] &0.\end{tikzcd}\]
	These local groups form a Selmer structure $\SSS(L)=\SSS(L;E)=\set{\SSS_v(L;E)}_v$.  Note that the associated Selmer group is $\sel{\SSS(L)}(L, E[p])=\sel{p}(E/L)$, the classical $p$-Selmer group.
\end{definition}

\begin{rem}
	The local groups $\SSS_v(L;E)$ above are in fact known to be maximal isotropic subgroups of $H^1(L_v, E[p])$ with respect to the local Tate pairing (see e.g. \cite{Poonen_2012}*{Proposition 4.10}).  In particular, $\SSS(L)$ is self-dual.
\end{rem}

\begin{definition}
	For each $v\in\places_F$ and any $w\in\places_K$ extending $v$, let 
		\[\FFF_v(K/F;E):=\res_{K_w/F_v}^{-1}(\SSS_w(K; E))\leq H^1(F_v,E[p]).\]
	Note that the definition does not depend on the choice of $w$ as our extension is Galois.  We then have a Selmer structure $\FFF(K)=\FFF(K/F;E)=\set{\FFF_v(K/F;E)}_v$ for $E[p]$ over $F$. We further define the Selmer structure $\CCC(K)=\CCC(K/F;E)$ for $E[p]$ to be the dual of $\FFF(K)$, and denote the corresponding local groups by $\CCC_v(K/F;E)$.
\end{definition}

\begin{lemma} \label{lem:inverse image of selp is selF}
We have
\[\sel{\FFF(K)}(F,E[p])=\res_{K/F}^{-1}\left(\sel{p}(E/K)\right).\]
\end{lemma}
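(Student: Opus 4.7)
The plan is to unwind both sides as subsets of $H^1(F,E[p])$ defined by local conditions and show they coincide place by place. The key tool is the compatibility of restriction: for each $v \in \places_F$ and any $w \in \places_K$ above $v$, the diagram
\[
\begin{tikzcd}
H^1(F,E[p]) \arrow[r,"\res_{K/F}"] \arrow[d,"\res_{F_v/F}"'] & H^1(K,E[p]) \arrow[d,"\res_{K_w/K}"] \\
H^1(F_v,E[p]) \arrow[r,"\res_{K_w/F_v}"] & H^1(K_w,E[p])
\end{tikzcd}
\]
commutes, so the two composites agree.

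First I would fix $c \in H^1(F,E[p])$ and note that $c \in \res_{K/F}^{-1}(\sel{p}(E/K))$ if and only if for every $w \in \places_K$ the class $\res_{K_w/K}(\res_{K/F}(c))$ lies in $\SSS_w(K;E)$. On the other hand, $c \in \sel{\FFF(K)}(F,E[p])$ if and only if for every $v\in\places_F$, the class $\res_{F_v/F}(c)$ lies in $\FFF_v(K/F;E) = \res_{K_w/F_v}^{-1}(\SSS_w(K;E))$ for any chosen $w \mid v$; by the commutative square above, this is equivalent to saying $\res_{K_w/K}(\res_{K/F}(c)) \in \SSS_w(K;E)$ for that one chosen $w$.

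The remaining issue is to bridge the quantifiers: one side demands the condition at every $w \in \places_K$, the other at one $w$ over each $v$. This is handled by Galois-equivariance: since $K/F$ is Galois, any two places $w, w'$ above a given $v \in \places_F$ are conjugate by some $\sigma \in G$, and the local Selmer conditions are compatible with this conjugation because $\SSS_{\sigma w}(K;E)$ is the image of $\SSS_w(K;E)$ under the map induced by $\sigma$, while $\res_{K/F}(c)$ is a global class (so its image in $H^1(K_{\sigma w}, E[p])$ is the Galois conjugate of its image in $H^1(K_w,E[p])$). Thus membership at one $w \mid v$ forces membership at every $w \mid v$, and the two characterisations of the Selmer groups coincide.

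The verification is essentially formal; the only point requiring care is the Galois compatibility argument in the final paragraph, which ensures the independence from the choice of $w$ asserted in the definition of $\FFF_v(K/F;E)$ really does recover the full local Selmer condition over $K$.
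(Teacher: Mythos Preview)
Your proof is correct and is precisely the unwinding of the paper's one-line proof, which reads in full: ``This follows from the compatibility of local and global restriction maps.'' Your commutative square is exactly this compatibility, and your Galois-equivariance paragraph makes explicit the independence of $\FFF_v(K/F;E)$ from the choice of $w\mid v$ that the paper asserts without proof in the definition of $\FFF(K)$.
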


\begin{proof}
This follows from the compatibility of local and global restriction maps. 
\end{proof}
 
\begin{lemma} \label{C is cores locally}
For every $v\in\places_F$ and every place $w\in\places_K$ extending $v$, we have 
\[\CCC_v(K/F;E)=\cores_{K_w/F_v}(\SSS_w(K;E))\leq H^1(F_v,E[p]).\]
\end{lemma}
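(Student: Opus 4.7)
The plan is to derive this from the general adjunction between restriction and corestriction under local Tate duality, together with the self-duality of $\SSS_w(K;E)$. Throughout, we identify $E[p]$ with its Cartier dual $E[p]^\ast$ via the Weil pairing, so that for each place $u$ of $F$ or $K$ the local Tate pairing becomes a perfect, alternating pairing on $H^1$.

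First I would recall (or briefly derive from the $\cup$-product description of local duality) that for any place $v\in\places_F$ and any $w\in\places_K$ above $v$, the restriction and corestriction maps are adjoint:
\[
\langle \res_{K_w/F_v}(a),\,b\rangle_{K_w} \;=\; \langle a,\, \cores_{K_w/F_v}(b)\rangle_{F_v}
\]
for all $a\in H^1(F_v, E[p])$ and $b\in H^1(K_w, E[p])$.

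Second, I would establish the following general formula: for any subgroup $U\leq H^1(K_w, E[p])$, the equality
\[
\bigl(\res_{K_w/F_v}^{-1}(U)\bigr)^{\perp} \;=\; \cores_{K_w/F_v}(U^{\perp})
\]
holds in $H^1(F_v, E[p])$. The inclusion $\supseteq$ is a one-line check using the adjunction displayed above: for $b\in U^{\perp}$ and $a\in \res^{-1}(U)$, one has $\langle a,\cores(b)\rangle_{F_v}=\langle \res(a),b\rangle_{K_w}=0$. The reverse inclusion is a duality/dimension argument: apply Pontryagin duality to the exact sequence
\[
0\to \res^{-1}(U)\to H^1(F_v, E[p])\to H^1(K_w, E[p])/U,
\]
and identify the Pontryagin duals of these groups with $\res^{-1}(U)^{\perp}$ inside $H^1(F_v,E[p])$, the whole group $H^1(F_v,E[p])$, and $U^\perp$ respectively, using local Tate duality. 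The dual sequence then reads $U^{\perp}\to H^1(F_v,E[p])\to \bigl(\res^{-1}(U)\bigr)^{\vee}\to 0$, and by adjunction the first map is exactly $\cores_{K_w/F_v}$ restricted to $U^\perp$; exactness of this dual sequence gives the desired equality of image with $\res^{-1}(U)^{\perp}$.

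Finally I would specialise to $U=\SSS_w(K;E)$. Since $\SSS_w(K;E)$ is maximal isotropic in $H^1(K_w,E[p])$ for the local Tate pairing (as noted in the remark following its definition), one has $\SSS_w(K;E)^{\perp}=\SSS_w(K;E)$. Combining this with the general formula and the definition $\CCC_v(K/F;E)=\FFF_v(K/F;E)^{\perp}=\bigl(\res_{K_w/F_v}^{-1}(\SSS_w(K;E))\bigr)^\perp$ yields $\CCC_v(K/F;E)=\cores_{K_w/F_v}(\SSS_w(K;E))$, as desired. Independence of the choice of $w$ above $v$ follows as in the definition of $\FFF_v$, because the places above $v$ are permuted transitively by $G$ and the Tate pairing, restriction and corestriction are all equivariant under this action. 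The main subtlety is simply getting the adjunction formula in the right direction; the remainder is bookkeeping with Pontryagin duals.
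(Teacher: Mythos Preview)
Your argument is correct and rests on the same two inputs as the paper's proof: the adjunction $\langle \res(a),b\rangle_{K_w}=\langle a,\cores(b)\rangle_{F_v}$ and the fact that $\SSS_w(K;E)$ is its own orthogonal complement. The difference lies only in how the harder inclusion $\CCC_v\subseteq\cores(\SSS_w)$ is obtained.

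You prove the general identity $(\res^{-1}(U))^{\perp}=\cores(U^{\perp})$ by dualising the exact sequence $0\to\res^{-1}(U)\to H^1(F_v,E[p])\to H^1(K_w,E[p])/U$. The paper instead applies the adjunction a second time: for $a\in\cores(\SSS_w)^{\perp}$ and $b\in\SSS_w$ one has $\langle\res(a),b\rangle_{K_w}=\langle a,\cores(b)\rangle_{F_v}=0$, so $\res(a)\in\SSS_w^{\perp}=\SSS_w$, giving $\cores(\SSS_w)^{\perp}\subseteq\res^{-1}(\SSS_w)=\FFF_v$ and hence $\CCC_v=\FFF_v^{\perp}\subseteq\cores(\SSS_w)$. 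Your route yields a formula valid for an arbitrary subgroup $U$ (not just a self-orthogonal one), at the cost of invoking exactness of Pontryagin duality; the paper's route is more elementary but leans on maximal isotropy twice. Either is perfectly adequate here.
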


\begin{proof}
In the case $p=2$ this is already noted by Kramer in the paragraph following Equation $10$ in \cite{kramer1981arithmetic}, and the proof in this case is explicated in \cite{MorganPaterson2020}*{proof of Lemma 4.3(i)}.  The result for general $p$ follows \textit{mutatis mutandis}.  We replicate it here for the reader's convenience.

For $v\in\places_F$ and $w\in\places_K$ extending $v$, it follows from \cite{neukirch2013class}*{I.5.4} and \cite{neukirch2013class}*{II Proposition 1.4(c) and  Theorem 5.6} that $\textup{res}_{K_w/F_v}$ and $\textup{cor}_{K_w/F_v}$ are adjoints with respect to the local Tate pairings. By \cite{Poonen_2012}*{Proposition 4.10}, $\SSS_v(F;E)$ and $\SSS_w(K;E)$ are maximal isotropic subspaces of the corresponding cohomology groups with respect to the Tate pairings.  Thus we have inclusions 
\[\cores_{K_w/F_v}(\SSS_w(K;E))\subseteq\FFF_v(K/F;E)^*\]
 and 
 \[\res_{K_w/F_v}\left(\cores_{K_w/F_v}(\SSS_w(K;E))^*\right)\subseteq \SSS_w(K;E)^*=\SSS_w(K;E).\] 
 The result then follows.
\end{proof}

We now relate the Selmer structures $\sel{\FFF(K)}(F, E[p])$ and $\sel{\CCC(K)}(F, E[p])$ to specific representation theoretic invariants of the $\FF_p [G]$-module $\sel{p}(E/F)$.

\begin{lemma}\label{lem:Relating selC with norm and selF with fixed space}
Let $N_{K/F}:=\sum_{g\in G}g\in \ZZ[G]$ be the norm element.  We have that
\begin{enumerate}[label=(\roman*)]
	\item$\dim\left(N_{K/F}\cdot \sel{p}(E/K)\right)\leq \dim\sel{\CCC(K)}(F, E[p]),\label{eq:rel selC with norm}$
	\item$\dim\left(\sel{p}(E/K)^G\right)= \dim\sel{\FFF(K)}(F, E[p])-\dim H^1(K/F, E(K)[p])+\dim(\im(\tau)),\label{eq:rel selF with fixed space}$
\end{enumerate}
where $\tau:H^1(K, E[p])\to H^2(K/F, E(K)[p])$ is the transgression map.
\end{lemma}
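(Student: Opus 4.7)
The plan is to handle the two statements separately, both relying on the compatibility of (co)restriction with the Selmer structures established in the preceding lemmas.

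For the first inequality, the key observation is that for any $x\in H^1(K,E[p])$, the composition $\res_{K/F}\circ\cores_{K/F}$ acts as multiplication by the norm element $N_{K/F}$ (a standard consequence of Hochschild--Serre, or Shapiro's lemma combined with the structure of $\ind^{G_F}_{G_K}E[p]$). Combining this with \Cref{C is cores locally}, which says that corestriction at every place carries $\SSS_w(K;E)$ into $\CCC_v(K/F;E)$, I obtain the factorisation
\[
\sel{p}(E/K)\xrightarrow{\cores_{K/F}}\sel{\CCC(K)}(F,E[p])\xrightarrow{\res_{K/F}} H^1(K,E[p]),
\]
whose composite is $N_{K/F}$. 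Hence $N_{K/F}\cdot\sel{p}(E/K)$ is contained in $\res_{K/F}(\sel{\CCC(K)}(F,E[p]))$, which has dimension at most that of $\sel{\CCC(K)}(F,E[p])$, yielding the bound.

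For the equality, I will use \Cref{lem:inverse image of selp is selF} together with the inflation--restriction five-term sequence
\[
0\to H^1(K/F,E(K)[p])\to H^1(F,E[p])\xrightarrow{\res_{K/F}} H^1(K,E[p])^G\xrightarrow{\tau}H^2(K/F,E(K)[p]).
\]
Restricting $\res_{K/F}$ to $\sel{\FFF(K)}(F,E[p])=\res_{K/F}^{-1}(\sel{p}(E/K))$, its kernel is still $H^1(K/F,E(K)[p])$ (which already sits inside the preimage of $0$), while its image is $\sel{p}(E/K)\cap\res_{K/F}(H^1(F,E[p]))=\sel{p}(E/K)\cap\ker(\tau)$. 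Because $\ker(\tau)\subseteq H^1(K,E[p])^G$, this intersection coincides with $\sel{p}(E/K)^G\cap\ker(\tau)$, i.e.\ the kernel of $\tau$ restricted to $\sel{p}(E/K)^G$. Applying rank--nullity to this restricted map, with image $\im(\tau)$ in the sense of the statement, and rearranging yields the claimed formula.

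The only mild subtlety is ensuring that the image of the global $\cores_{K/F}$ lies in $\sel{\CCC(K)}(F,E[p])$ regardless of which place $w \mid v$ was chosen to define $\CCC_v$: this is immediate from the $G$-equivariance implicit in the fact that $\FFF_v$, and hence its dual $\CCC_v$, is independent of the choice of $w\mid v$ in a Galois extension. Beyond this routine check, both parts are essentially unwinding the definitions of $\FFF(K)$ and $\CCC(K)$ in light of \Cref{lem:inverse image of selp is selF} and \Cref{C is cores locally}, so I do not anticipate a substantive obstacle.
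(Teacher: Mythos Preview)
Your proposal is correct and follows essentially the same route as the paper: the paper proves \eqref{eq:rel selC with norm} by invoking ``naturality of the corestriction map, which is induced by action of $N_{K/F}$'' (precisely your factorisation $N_{K/F}=\res_{K/F}\circ\cores_{K/F}$ together with \Cref{C is cores locally}), and proves \eqref{eq:rel selF with fixed space} by reading off the four--term exact sequence obtained by restricting the inflation--restriction sequence to $\sel{\FFF(K)}(F,E[p])$ via \Cref{lem:inverse image of selp is selF}. Your write--up simply makes explicit the two rank--nullity steps that the paper leaves implicit.
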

\begin{proof}
	\ref{eq:rel selC with norm} is given by naturality of the corestriction map, which is induced by action of $N_{K/F}$.  By \Cref{lem:inverse image of selp is selF}, the inflation-restriction sequence yields an exact sequence
	\[\begin{tikzcd}
	0\arrow[r]&H^1(K/F, E(K)[p])\arrow[r, "\infl"]&\sel{\FFF(K)}(F, E[p])\arrow[r, "\res"]&\sel{p}(E/K)^{G}\arrow[r, "\tau"]&H^2(K/F, E(K)[p]).
	\end{tikzcd}\]
	Thus \ref{eq:rel selF with fixed space} holds.
\end{proof}

We now introduce the function that will bound the failure Galois descent in our statistical results.
\begin{definition}\label{def:genus theory part}
	We define the genus theory invariant of the $p$-Selmer group of $E$ arising from the extension $K/F$ to be
	\[g_p(K/F;E):=\sum_{v\in\places_F} \dim E(F_v)/\left(N_{K_w/F_v}E(K_w)+pE(F_v)\right),\]
	where, in each summand, $w\in \Omega_K$ is any place of $K$ lying over $v$.
\end{definition}

\begin{lemma}\label{lem:selF-selC is sum of norms}We have
\[\dim \sel{\FFF(K)}(F,E[p])- \dim \sel{\CCC(K)}(F,E[p])=g_p(K/F;E), \]
and moreover,
\[0\leq\dim\sel{\FFF(K)}(F, E[p])-\dim \sel{p}(E/F)\leq g_p(K/F;E).\]
\end{lemma}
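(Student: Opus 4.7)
The plan is to apply the Greenberg--Wiles formula (\Cref{wiles greenberg formula}) to the Selmer structure $\FFF(K)$, whose dual is $\CCC(K)$. Since the Weil pairing identifies $E[p]$ with $E[p]^*$ as $\FF_p[G_F]$-modules, the two global terms in Greenberg--Wiles cancel, giving
\[\dim \sel{\FFF(K)}(F,E[p]) - \dim \sel{\CCC(K)}(F,E[p]) = \sum_{v \in \places_F} \bigl(\dim \FFF_v(K/F;E) - \dim E[p]^{G_{F_v}}\bigr).\]
Applying the same formula to the self-dual standard Selmer structure $\SSS(F)$ (whose left-hand side is zero) yields $\sum_v (\dim \SSS_v(F;E) - \dim E[p]^{G_{F_v}}) = 0$, so the first claimed identity reduces to showing
\[\sum_v \bigl(\dim \FFF_v(K/F;E) - \dim \SSS_v(F;E)\bigr) = g_p(K/F;E).\]

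The next step is to establish the local chain $\CCC_v \subseteq \SSS_v \subseteq \FFF_v$. The right inclusion is immediate: for $P \in E(F_v)$, the Kummer class $\delta_v(P)$ restricts to $\delta_w(P) \in \SSS_w(K;E)$, so $\delta_v(P) \in \FFF_v$. Taking orthogonal complements with respect to the local Tate pairing, and using that $\SSS_v$ is maximal isotropic, yields the left inclusion. Since $\CCC_v = \FFF_v^{\perp}$ and $\SSS_v = \SSS_v^{\perp}$, one has $\dim \FFF_v + \dim \CCC_v = 2 \dim \SSS_v$, hence
\[\dim \FFF_v - \dim \SSS_v = \dim \SSS_v - \dim \CCC_v = \dim(\SSS_v/\CCC_v).\]
To identify this quotient explicitly, use \Cref{C is cores locally} to write $\CCC_v = \cores_{K_w/F_v}(\SSS_w(K;E))$, and use naturality of the Kummer coboundary, namely $\cores_{K_w/F_v} \circ \delta_w = \delta_v \circ N_{K_w/F_v}$, to conclude that under $\delta_v : E(F_v)/pE(F_v) \stackrel{\sim}{\to} \SSS_v$, the subgroup $\CCC_v$ corresponds precisely to the image of $N_{K_w/F_v}E(K_w)$ in $E(F_v)/pE(F_v)$. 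Thus each local term equals $\dim E(F_v)/(N_{K_w/F_v}E(K_w) + pE(F_v))$, and summing over $v$ gives the first identity.

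For the second statement, the inclusion $\SSS_v \subseteq \FFF_v$ at every place immediately gives $\sel{p}(E/F) \subseteq \sel{\FFF(K)}(F,E[p])$, hence the lower bound. For the upper bound, the standard Selmer comparison argument yields an injection
\[\sel{\FFF(K)}(F,E[p]) / \sel{p}(E/F) \hookrightarrow \bigoplus_{v \in \places_F} \FFF_v / \SSS_v\]
sending a cohomology class to its tuple of local components, so the dimension difference is at most $\sum_v (\dim \FFF_v - \dim \SSS_v) = g_p(K/F;E)$ by the local analysis above. The main obstacle is not conceptual but is the careful bookkeeping needed to verify that the Kummer coboundary commutes with norm/corestriction and that the local Tate-pairing identifications align correctly with the global self-duality invoked in Greenberg--Wiles.
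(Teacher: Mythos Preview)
Your argument is correct and follows essentially the same route as the paper: apply Greenberg--Wiles to the pair $\FFF(K),\CCC(K)$, use Greenberg--Wiles for the self-dual structure $\SSS(F)$ to eliminate the $\dim E[p]^{G_{F_v}}$ terms, and then identify each local contribution $\dim\FFF_v-\dim\SSS_v=\dim\SSS_v/\CCC_v$ with the norm-index summand via \Cref{C is cores locally}. The only notable difference is in the second inequality: rather than your injection $\sel{\FFF(K)}/\sel{p}(E/F)\hookrightarrow\bigoplus_v\FFF_v/\SSS_v$, the paper simply invokes the sandwich $\sel{\CCC(K)}(F,E[p])\subseteq\sel{p}(E/F)\subseteq\sel{\FFF(K)}(F,E[p])$ and reads off the bound directly from the first identity, which is marginally quicker.
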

\begin{proof} 
For each $v\in\places_F$, the groups $\CCC_v=\CCC_v(K/F;E)$ and $\FFF_v=\FFF_v(K/F;E)$ are orthogonal complements under the local Tate pairing, so we have 
$\dim\FFF_v=\dim H^1(F_v,E[p])-\dim \CCC_v.$
Moreover, since $\SSS_v(F;E)$ is maximal isotropic, we have $\dim H^1(F_v,E[p])=2\dim E(F_v)/pE(F_v)$.  Combining this with \Cref{C is cores locally}, we obtain
\begin{align*}
\dim \FFF_v&= 2\dim E(F_v)/pE(F_v)-\dim\CCC_v
\\ &= 2\dim E(F_v)/pE(F_v)-\dim \left(N_{K_w/F_v}E(K_w)/\left(pE(F_v)\cap N_{K_w/F_v}E(K_w)\right)\right)
\\ &= \dim E(F_v)/pE(F_v)+\dim E(F_v)/\left(N_{K_w/F_v}E(K_w)+pE(F_v)\right).
\end{align*}
It then follows from \Cref{wiles greenberg formula} that
\begin{eqnarray*}
\dim \sel{\FFF(K)}(F,E[p])- \dim \sel{\CCC(K)}(F,E[p])&=&\sum_{v\in\places_F} \dim E(F_v)/\left(N_{K_w/F_v}E(K_w)+pE(F_v)\right) \\ &&+\sum_{v\in\places_F} \left(\dim E(F_v)/pE(F_v)- \dim E(F_v)[p]\right)
\\&=&g_p(K/F;E),
\end{eqnarray*}
where the last equality is obtained by applying \Cref{wiles greenberg formula} to the self-dual Selmer structure $\SSS(E/F)$, so the first equation in the lemma statement holds.

The second equation follows from the inclusions
\[\sel{\CCC(K)}(F, E[p])\subseteq\sel{p}(E/F)\subseteq \sel{\FFF(K)}(F, E[p]).\qedhere\]
\end{proof}

\section{Galois Descent for \texorpdfstring{$p$}{p}-Selmer Groups}\label{sec:GalDescent}
We will now use the algebraic results of \S\ref{sec:local copmputations} and \S\ref{sec:CoresSelmer} to obtain our statistical results.  This will culminate in a proof of \Cref{INTRO: gal descent failure is bdd} which tells us that, for a finite Galois extension of number fields $K/F$ and a prime number $p$, as we vary over the $E$ parametrised by $\Epsilon(X)$, the average value of
	\[\abs{\dim\sel{p}(E/K)^{\gal(K/F)}-\dim \sel{p}(E/F)},\]
which we refer to as the failure of Galois descent, is bounded as $X\to\infty$.  We use \Cref{lem:Relating selC with norm and selF with fixed space}\ref{eq:rel selF with fixed space} to relate the Selmer group $\sel{\FFF(K)}(F, E[p])$ to the Galois fixed space, which allows us to use \Cref{lem:selF-selC is sum of norms} to bound this failure of Galois descent by the genus theory invariant $g_p(K/F;E)$.  The remainder of the proof is then showing that the function $g_p(K/F;E)$ has bounded average as $E$ varies in $\Epsilon$.

\subsection{Preliminary Counting Lemmas}
We begin by recalling the description, afforded by Tate's algorithm, of the reduction type of the curves $E_{A,B}$ in terms of the pair $(A,B)\in\Epsilon$ at almost all places. 
\begin{lemma}\label{lem:TatesAlg}
	For a prime number $\ell\geq 5$ and $(A,B)\in\Epsilon$, the reduction type of $E_{A,B}/\QQ_\ell$ is
	\begin{itemize}
		\item $I_n$ for $n>0$ if and only if $v_\ell(4A^3+27B^2)=n$ and $v_\ell(AB)=0$,
		\item additive if and only if $v_\ell(\gcd(A,B))>0$,
	\end{itemize}
	where $v_\ell$ is the normalised valuation on $\QQ_\ell$.
\end{lemma}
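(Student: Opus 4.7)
The plan is to apply Tate's algorithm directly to the short Weierstrass model $y^2=x^3+Ax+B$, after first verifying that this model is minimal at primes $\ell\geq 5$.

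First I would record the standard invariants of this model: $c_4=-48A$, $c_6=-864B$, and $\Delta=-16(4A^3+27B^2)$. For $\ell\geq 5$ we have $\gcd(\ell,48\cdot 864\cdot 16)=1$, hence $v_\ell(c_4)=v_\ell(A)$ and $v_\ell(\Delta)=v_\ell(4A^3+27B^2)$. To check minimality at $\ell\geq 5$, recall that the model is minimal at $\ell$ exactly when one cannot carry out the substitution $(x,y)\mapsto(\ell^2 x',\ell^3 y')$, i.e.\ exactly when $v_\ell(A)<4$ or $v_\ell(B)<6$. Since $\gcd(A^3,B^2)$ is $12$th-power free by the definition of $\Epsilon$, we have $\min(3v_\ell(A),2v_\ell(B))<12$, which is equivalent to the minimality condition just stated.

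Next I would invoke Tate's algorithm in the tame case $\ell\geq 5$ (see e.g.\ \cite{silverman1994advanced}*{IV Thm 9.4}): on a minimal model, $E_{A,B}/\QQ_\ell$ has good reduction if $v_\ell(\Delta)=0$, multiplicative reduction of type $I_n$ if $v_\ell(\Delta)=n>0$ and $v_\ell(c_4)=0$, and additive reduction if $v_\ell(\Delta)>0$ and $v_\ell(c_4)>0$. It only remains to translate these conditions into the language of the lemma.

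The remaining step is an elementary valuation computation. Suppose $v_\ell(4A^3+27B^2)>0$. If $v_\ell(A)=0$ then $v_\ell(4A^3)=0$, and if additionally $v_\ell(B)>0$ then $v_\ell(27B^2)>0$, forcing $v_\ell(4A^3+27B^2)=0$, a contradiction; thus $v_\ell(B)=0$ as well, giving the ``$v_\ell(AB)=0$'' part of the $I_n$ clause. Conversely if $v_\ell(A)>0$ and $v_\ell(4A^3+27B^2)>0$, then since $v_\ell(4A^3)\geq 3>0$ we must also have $v_\ell(27B^2)>0$, whence $v_\ell(B)>0$ and so $v_\ell(\gcd(A,B))>0$. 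The reverse implication for additive reduction is immediate: if $v_\ell(\gcd(A,B))>0$ then both $v_\ell(c_4)>0$ and $v_\ell(\Delta)>0$. The main obstacle is purely bookkeeping, namely keeping straight the equivalence between the minimality hypothesis built into $\Epsilon$ and the hypothesis required by Tate's algorithm; once that is done the valuation analysis above gives both clauses.
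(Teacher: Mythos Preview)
Your proof is correct and takes the same approach as the paper, which simply cites Tate's algorithm without further detail; you have just filled in the minimality check and the elementary valuation bookkeeping that the paper leaves implicit.
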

\begin{proof}
	This is a consequence of Tate's algorithm \cite{silverman2009arithmetic}*{IV.9.4}.
\end{proof}

\begin{proposition}\label{cor:large non I1 primes}
	There exists a constant $C>0$ such that for all real numbers $X\in\mathbb{R}_{>0}$,
	\[
		\sum_{(A,B)\in \Epsilon(X)}\#\set{\ell\geq \log(X)~:~\substack{\ell\textnormal{ is prime};\\E_{A,B}/\QQ_\ell\textnormal{ has bad reduction of type}\textnormal{ different from }I_1.}}
	\leq C\left(\frac{X^{5/6}}{\log(X)}\right).\]
\end{proposition}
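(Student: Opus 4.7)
The plan is to apply \Cref{lem:TatesAlg} to reformulate the count in terms of divisibility conditions on $A$, $B$, and $\Delta := 4A^3+27B^2$. For $\ell\geq 5$, the curve $E_{A,B}$ has bad reduction different from $I_1$ at $\ell$ if and only if either (i) $\ell\mid \gcd(A,B)$ (additive case), or (ii) $v_\ell(\Delta)\geq 2$ and $\ell\nmid AB$ (type $I_n$ with $n\geq 2$). Since $\#\Epsilon(X)=O(X^{5/6})$, the contribution of $\ell\in\set{2,3}$, which is only relevant when $\log X\leq 3$, can be absorbed into the constant $C$. I would therefore restrict to $\ell\geq 5$ and swap the order of summation, handling (i) and (ii) separately, throughout using the standing bounds $|A|\leq X^{1/3}$ and $|B|\leq X^{1/2}$.

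For (i), I would count pairs $(A,B)$ with $\ell\mid A$ and $\ell\mid B$, obtaining at most $O\bigl((X^{1/3}/\ell+1)(X^{1/2}/\ell+1)\bigr)$. For $\log X\leq \ell\leq X^{1/3}$ this is $O(X^{5/6}/\ell^2)$, and partial summation using $\pi(t)\sim t/\log t$ gives $\sum_{\ell\geq \log X}\ell^{-2}=O(1/(\log X\cdot \log\log X))$, so this range contributes $O(X^{5/6}/\log X)$. For $X^{1/3}<\ell\leq X^{1/2}$ the condition $\ell\mid A$ forces $A=0$, leaving $O(X^{1/2}/\ell)$ pairs; Mertens' theorem then gives $\sum_{X^{1/3}<\ell\leq X^{1/2}}\ell^{-1}=\log(3/2)+o(1)$, yielding $O(X^{1/2})=o(X^{5/6}/\log X)$. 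For $\ell>X^{1/2}$, no admissible pair exists.

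For (ii), fix $\ell\geq 5$ and $A$ with $\ell\nmid A$. Since $\ell\nmid 27A^3$ and $\ell$ is odd, the congruence $27B^2\equiv -4A^3\pmod{\ell^2}$ has at most two solutions modulo $\ell^2$, so the number of admissible $B\in[-X^{1/2},X^{1/2}]$ is $O(X^{1/2}/\ell^2+1)$; summing over $A$ gives $O(X^{5/6}/\ell^2+X^{1/3})$ per prime. The constraint $|\Delta|\leq 31X$ combined with $\Delta\neq 0$ truncates the $\ell$-summation at $\ell\leq\sqrt{31X}$. For $\log X\leq \ell\leq X^{1/4}$ the first term dominates and summing $X^{5/6}/\ell^2$ gives $O(X^{5/6}/\log X)$ as in case (i). For $X^{1/4}<\ell\leq \sqrt{31X}$ the per-prime count drops to $O(X^{1/3})$, and since the prime number theorem gives $O(X^{1/2}/\log X)$ primes in this range, the total contribution is again $O(X^{5/6}/\log X)$.

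The main obstacle is the range $X^{1/4}<\ell\leq \sqrt{31X}$ in case (ii), where the per-prime count no longer benefits from an extra factor of $\ell^{-2}$. Here it is essential to invoke $|\Delta|\leq 31X$ to truncate the $\ell$-summation at $\ell\sim X^{1/2}$; without this truncation, summing the $O(X^{1/3})$ per-prime bound over all primes up to $X$ would give $O(X^{4/3}/\log X)$, which is far worse than the target.
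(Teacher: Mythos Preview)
Your proposal is correct and follows the same strategy as the paper: split into the additive and multiplicative-but-not-$I_1$ cases via \Cref{lem:TatesAlg}, swap the order of summation, and bound each prime's contribution by elementary counting. Your handling of the multiplicative case is in fact more careful than the paper's: the paper records the per-prime bound $X^{5/6}/\ell^{2}+O(X^{1/3})$ and then sums over all primes $\log X\leq\ell\leq X$, which taken literally would give $O(X^{4/3}/\log X)$; your observation that $\abs{\Delta}\leq 31X$ forces $\ell\leq\sqrt{31X}$ is exactly the truncation needed to make that final step go through.
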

\begin{proof}
	We split the summand into counts of additive and multiplicative primes.
	
	By \Cref{lem:TatesAlg}, primes of additive reduction for $E_{A,B}$ divide $\gcd(A,B)$, so are bounded by the absolute values of $A$ and $B$.  Therefore, we have
	\begin{align*}
		&\sum_{(A,B)\in \Epsilon(X)}\#\set{\ell\geq \log(X)~:~\substack{\ell\textnormal{ is prime};\\E_{A,B}/\QQ_\ell\textnormal{ has additive reduction.}}}
		\\&\leq\sum_{\substack{\log(X)\leq\ell\leq X^{1/3}\\\textnormal{prime}}}\sum_{\substack{\abs{A}\leq (X/4)^{1/3}\\\ell\mid A}}\sum_{\substack{\abs{B}\leq (X/27)^{1/2}\\\ell\mid B}}1
		\\&\ll\sum_{\substack{\log(X)\leq\ell\leq X^{1/3}\\\textnormal{prime}}}\left(\frac{4X^{5/6}}{\ell^{2}}+\bigo{\frac{X^{1/2}}{\ell}}\right)
		\\&\ll \left(X^{5/6}\int_{\log(X)}^{X^{1/3}}\frac{1}{y^{2}}dy\right)+X^{1/2}\log\log(X)
		\\&\ll \frac{X^{5/6}}{\log(X)},
	\end{align*}
	where the penultimate inequality uses an integral estimate for the main term, that the sum of reciprocals of prime numbers has order $\log\log(X)$ and the prime number theorem for the error term.

	For the multiplicative primes: \Cref{lem:TatesAlg} shows that if $\ell$ is multiplicative of type different from $I_1$ for $E_{A,B}$ then $\ell^2\mid (4A^3+27B^2)$ but $\ell\nmid AB$.  Hence we have
	\begin{align*}
		&\sum_{(A,B)\in \Epsilon(X)}\#\set{\ell\geq \log(X)~:~\substack{\ell\textnormal{ is prime};\\E_{A,B}/\QQ_\ell\textnormal{ has multiplicative reduction of type}\textnormal{ different from }I_1.}}
		\\&\leq\sum_{\substack{\log(X)\leq \ell\leq \sqrt{31X}\\\textnormal{prime}}}\sum_{\substack{\abs{A}\leq (X/4)^{1/3}\\\ell\nmid A}}\sum_{\substack{\abs{B}\leq (X/27)^{1/2}\\\ell^2\mid 4A^3+27B^2}}1
		\\&\ll\sum_{\substack{\log(X)\leq \ell\leq \sqrt{31X}\\\textnormal{prime}}}\left(\frac{X^{5/6}}{\ell^2}+\bigo{X^{1/3}}\right)
		\\&\ll \frac{X^{5/6}}{\log(X)}.
	\end{align*}
	The result follows.
\end{proof}
\subsection{Bounding the Genus Theory Invariant}
We begin by noting some elementary bounds on the norm indices which occur as summands in the genus theory invariant (as in \Cref{def:genus theory part}).

\begin{lemma}\label{lem:norm indice bounds}
	Let $F$ be a number field, $K/F$ be a finite extension, $p$ be a prime number and $E/F$ be an elliptic curve.  
		\begin{equation}
		\dim E(F_v)/pE(F_v)
		\leq\begin{cases}
		2+[F_v:\QQ_p]&\textnormal{if }v\mid p,\\
		2&\textnormal{if }v\textnormal{ is a finite place and }v\nmid p,\\
		1&\textnormal{if }v\textnormal{ is a real place and }p=2,\\
		0&\textnormal{otherwise.}
		\end{cases}\end{equation}
		In particular, for every $v\in\places_F$ and each $w\in\places_K$ extending $v$, the same bound holds for $\dim E(F_v)/\left(N_{K_w/F_v}E(K_w)+pE(F_v)\right)$.
\end{lemma}
\begin{proof}
For each finite place $\pp\in\places_F$ and each $E/\QQ$, there is a finite index subgroup, arising from the filtration by formal groups, of $E(F_\pp)$ which is isomorphic to the additive group of integers $\OO_\pp$ of $F_\pp$ (see e.g. \cite{silverman2009arithmetic}*{VII Prop. 6.3}).  Thus these norm indices are bounded by
	\begin{equation}
	\#E(F_\pp)/pE(F_\pp)=(\#E(F_\pp)[p])(\# \OO_{\pp}/p\OO_{\pp})\leq\begin{cases}
	p^{2+[F_\pp:\QQ_p]}&\pp\mid p,\\
	p^2&\textnormal{else}.
	\end{cases}\end{equation}
	Moreover, for archimedean places $v\in\places_F$, if $p$ is odd or $v$ is complex then we have $\dim E(F_v)/pE(F_v)\leq \dim H^1(F_v, E[p])=0$.  If, on the other hand, $p=2$ and $v$ is real then elementary computations show that the dimension of the quotient at $v$ is at most $1$.
\end{proof}
We are now mathematically ready to bound the average of the genus theory invariant, but first we require a small amount of notation.
\begin{notation}
	For a number field $F$, we define the function $\omega_F$ on the set of ideals of the integers of $F$ to send the ideal $I$ to
	\[\omega_F(I):=\#\set{\pp\in\places_F~:~\pp\mid I}.\]
	We also define $r_1(F)$ to be the number of real embeddings of $F$.  Moreover, $\delta_2$ is the function which takes each prime number $p$ to $1$ if $p=2$ and $0$ otherwise.
\end{notation}
We now bound the average of the genus theory invariant.
\begin{proposition}\label{prop:genus theory is bounded}
	For every number field $F$, finite Galois extension $K/F$, prime number $p$ and real number $X\in\RR_{>0}$ we have
	\[\frac{\sum\limits_{(A,B)\in\Epsilon(X)}g_p(K/F;E_{A,B})}{\#\Epsilon(X)}\leq C_p(K/F)+\bigo{\frac{[F:\QQ]}{\log(X)}},\]
	where
	\[C_p(K/F)=2 \omega_F(6p\Delta_{K})+[F:\QQ]+\delta_2(p)r_1(F) +
		2\sum\limits_{\substack{\ell \textnormal{ prime}\\\ell\nmid 6p\Delta_{K}}}\omega_F(\ell)\frac{2\ell^8-\ell^7-1}{\ell^{10}-1}.\]
\end{proposition}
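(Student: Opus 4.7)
The plan is to expand $g_p(K/F;E_{A,B})$ as a sum over places $v\in\places_F$, interchange orders of summation, and analyse the inner sums depending on what each place lies above.  I would partition the places of $F$ into four regimes: archimedean places; places above rational primes dividing $6p\Delta_K$; places above ``good'' primes $\ell\nmid 6p\Delta_K$ with $\ell\leq \log X$; and places above good primes with $\ell>\log X$.

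At archimedean places and places above primes dividing $6p\Delta_K$, I would apply the uniform bounds of \Cref{lem:norm indice bounds} directly.  These contribute, per curve, at most $\delta_2(p)r_1(F)$ and $2\omega_F(6p\Delta_K)+[F:\QQ]$ respectively, where the latter absorbs the $[F_v:\QQ_p]$ summands at places $v\mid p$ via $\sum_{v\mid p}[F_v:\QQ_p]=[F:\QQ]$.  At places above the good primes the extension $K_w/F_v$ is unramified, so the local computations of \S\ref{sec:local copmputations} apply: the local contribution vanishes whenever $E_{A,B}$ has reduction type $I_1$ at $\ell$ (by the Remark following \Cref{prop:nonsplit mult norm index odd}) and analogously in the good-reduction case at $\ell\neq p$ (after reducing to a norm on the formal group and the finite fibre, absorbing the residual $p$-part of the cokernel into the $pE(F_v)$ denominator).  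The remaining ``bad-not-$I_1$'' curves contribute at most $2$ per place by \Cref{lem:norm indice bounds}.  For large primes $\ell>\log X$, \Cref{cor:large non I1 primes} bounds the total count of curve--prime pairs with bad-not-$I_1$ reduction by $\bigo{X^{5/6}/\log X}$; multiplication by $2\omega_F(\ell)\leq 2[F:\QQ]$ and normalisation by $\#\Epsilon(X)\asymp X^{5/6}$ yields the $\bigo{[F:\QQ]/\log X}$ error term.

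For small good primes $\ell\leq \log X$, I would compute exactly the density within $\Epsilon$ of curves with bad-not-$I_1$ reduction at $\ell$.  Using \Cref{lem:TatesAlg}, this reduces to counting pairs $(A,B)\pmod{\ell^{12}}$ with $v_\ell(4A^3+27B^2)\geq 2$.  Splitting into the case $\ell\mid\gcd(A,B)$ (density $1/\ell^2$) and the case $\ell\nmid\gcd(A,B)$ with $\ell^2\mid 4A^3+27B^2$ (density $(\ell-1)/\ell^3$, computed via a Hensel lift after observing that exactly $(\ell-1)/2$ classes $A\pmod\ell$ make $-4A^3/27$ a nonzero quadratic residue), the unrestricted density is $(2\ell-1)/\ell^3$.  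Subtracting the non-$\Epsilon$ density $1/\ell^{10}$ (pairs with $v_\ell(A)\geq 4$ and $v_\ell(B)\geq 6$, all of which have $v_\ell(\Delta)\geq 12$) and renormalising by the $\Epsilon$-density $1-1/\ell^{10}$ yields exactly $\tfrac{2\ell^8-\ell^7-1}{\ell^{10}-1}$; scaling by $2\omega_F(\ell)$ and summing over $\ell\nmid 6p\Delta_K$ contributes the tail of $C_p(K/F)$.

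The main obstacle is the density computation: one must carefully execute the Hensel lift for $27B^2\equiv -4A^3\pmod{\ell^2}$ to verify the $\ell(\ell-1)$-pair count in the second subcase, which hinges on showing that (regardless of whether $\ell\equiv 1$ or $2\pmod 3$) exactly half of the nonzero residues in the range of $A\mapsto -4A^3/27$ are quadratic residues modulo $\ell$.  A secondary subtlety is verifying the good-reduction vanishing at places where $p$ divides the local degree $[K_w:F_v]$, where one must check that the cokernel of the norm on the reduced fibre is annihilated modulo $pE(F_v)$ in this unramified setting.
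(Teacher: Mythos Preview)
Your plan is correct and mirrors the paper's proof closely: the same split into places over $6p\infty\Delta_K$ versus the rest, the same use of \Cref{lem:norm indice bounds} for the former, the same $I_1$ vanishing via \S\ref{sec:local copmputations} for the latter, and the same large-prime tail via \Cref{cor:large non I1 primes}.  The two substantive differences are both places where the paper outsources work that you propose to do by hand.

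First, for the good-reduction vanishing at unramified places (your ``secondary subtlety''), the paper simply cites \cite{mazur1972rational}*{Corollary 4.4}: the norm $E(K_w)\to E(F_v)$ is surjective whenever $E$ has good reduction at $v$ and $K_w/F_v$ is unramified, so the summand vanishes outright without needing to pass to $pE(F_v)$.  Your formal-group-plus-special-fibre sketch would recover this, but Mazur's result is the cleaner route and avoids the case analysis you flag.

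Second, for the bad-not-$I_1$ density $\tfrac{2\ell^8-\ell^7-1}{\ell^{10}-1}$, the paper quotes \cite{cho2020distribution}*{Theorem 1.4} directly (with an explicit $O(\ell X^{1/2})$ error), whereas you compute it from scratch.  Your computation is correct: the key identity $\chi(-4A^3/27)=\chi(-4/27)\chi(A)$ (since $\chi(A)^3=\chi(A)$) shows that exactly $(\ell-1)/2$ residues $A\bmod\ell$ make $-4A^3/27$ a nonzero square regardless of $\ell\bmod 3$, giving the $(\ell-1)/\ell^3$ contribution; combined with $1/\ell^2$ for the additive locus and the $1/\ell^{10}$ correction for the $\Epsilon$-condition, this yields the stated density.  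This buys you a self-contained argument at the cost of a paragraph of elementary counting; the paper's citation buys brevity and an explicit error term that you would otherwise need to supply via a routine lattice-point count.
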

\begin{proof}
	For each elliptic curve $E/\QQ$, number field $F$, and finite Galois extension $K/F$, define
	\begin{align*}
		g^{(0)}_p(K/F;E)&=\sum_{\substack{v\in\places_F\\v\mid 6p\infty\Delta_{K}}}\dim E(F_v)/\left(N_{K_w/F_v}E(K_w)+pE(F_v)\right),
		\\g^{(1)}_p(K/F;E)&=\sum_{\substack{\pp\in\places_F\\\pp\nmid 6p\infty\Delta_{K}\\\pp\mid N(E/F)}}\dim E(F_\pp)/\left(N_{K_\PP/F_\pp}E(K_\PP)+pE(F_\pp)\right),
	\end{align*}
	where in each summand, $w$ (resp. $\PP$) is a place of $K$ above $v$ (resp. $\pp$), and $N(E/F)$ is the conductor of $E/F$.  By \cite{mazur1972rational}*{Corollary 4.4}, the norm map is surjective at primes of good reduction which are unramified in $K/F$, so the norm indices at such primes are trivial.  Thus
	\[g_p(K/F;E)=g_p^{(0)}(K/F;E)+g_p^{(1)}(K/F;E),\]
	so we bound the average of $g_p^{(i)}(K/F;E)$ for $i\in\set{0,1}$.

	If $i=0$ then by \Cref{lem:norm indice bounds},
	\begin{align*}
		\sum\limits_{(A,B)\in\Epsilon(X)}g_p^{(0)}(K/F;E_{A,B})&\leq \left(2\#\set{\pp\in\places_F~:~\pp\mid 6p\Delta_{K}}+[F:\QQ]+\delta_2(p)r_1(F)\right)\#\Epsilon(X).
	\end{align*}

	We now deal with the case that $i=1$.  By \Cref{lem:I1reduction norm is surj}, the norm index at primes of reduction type $I_1$ is trivial.  Thus, for each elliptic curve $E/\QQ$, the sum $g_p^{(1)}(K/F;E)$ is the sum of norm indices at unramified primes of bad reduction of type different from $I_1$ over $F$. By the methods of \cite{cho2020distribution}*{Theorem 1.4}, which work identically for our height as for theirs, for each prime number $\ell\in [5, X^{1/6}]$ one has
	\begin{equation}\label{eq:curves with bad redux not I1}
	\#\set{(A,B)\in\Epsilon(X)~:~\substack{E_{A,B}/\QQ_\ell\textnormal{ has bad reduction}\\\textnormal{ of type different from }I_1}}=\#\Epsilon(X)\frac{2\ell^8-\ell^7-1}{\ell^{10}-1}+\bigo{\ell X^{1/2}}.
	\end{equation}

	Since we are looking at unramified local extensions $F_\pp/\QQ_\ell$, curves with bad reduction of type different from $I_1$ over $F_\pp$ must satisfy the same condition over $\QQ_\ell$. We then have 
	\begin{align*}
		&\sum_{(A,B)\in\Epsilon(X)}g_p^{(1)}(K/F;E_{A,B})
		\\&\leq 2\sum_{\substack{5\leq \ell\leq 31X\\\textnormal{prime}\\\ell\nmid p\Delta_{K}}}\sum_{\substack{\pp\in\places_F\\\pp\mid \ell}}\#\set{(A,B)\in\Epsilon(X)~:~\substack{E_{A,B}\textnormal{ has bad reduction}\\\textnormal{of type different from }I_1\textnormal{ at }\ell}}
		\\&\leq 2\sum_{\substack{5\leq \ell\leq \log(X)\\\textnormal{prime}\\\ell\nmid p\Delta_{K}}}\sum_{\substack{\pp\in\places_F\\\pp\mid \ell}}\left(\#\Epsilon(X)\frac{2\ell^8-\ell^7-1}{\ell^{10}-1}+\bigo{\ell X^{1/2}}\right)+\bigo{\frac{X^{5/6}[F:\QQ]}{\log(X)}}
		\\&\leq 2\#\Epsilon(X)\sum\limits_{\substack{\ell \textnormal{ prime}\\\ell\nmid 6p\Delta_{K}}}\#\set{\pp\in\places_F~:~\pp\mid \ell}\frac{2\ell^8-\ell^7-1}{\ell^{10}-1}+\bigo{\frac{X^{5/6}[F:\QQ]}{\log(X)}},
	\end{align*}
	where in the first inequality we bound the norm index by \Cref{lem:norm indice bounds}, and in the second we discount large primes using \Cref{cor:large non I1 primes} and then apply \eqref{eq:curves with bad redux not I1}.  The bound then follows from the well known fact that $\#\Epsilon(X)\sim cX^{5/6}$ for some $c>0$.
\end{proof}

\subsection{Proof of \texorpdfstring{\Cref{INTRO: gal descent failure is bdd}}{Average Failure of Galois Descent Being Bounded}}
We first use the Selmer structures of \S\ref{sec:CoresSelmer} to approximate the dimension of the corresponding fixed space.  To begin, almost no elliptic curves defined over $\QQ$ have nontrivial $n$-torsion over a fixed number field $K$.  The proof of this is obtained verbatim from the argument of Duke \cite{DUKE1997813}*{Lemma 5} in the case $K=\QQ$, applying the relevant sieve conditions only at totally split primes as performed by Zywina \cite{Zywina_2010}*{Proposition 5.7}.
\begin{lemma}\label{prop:torsion bound}
	Let $n$ be a positive integer and let $K/\QQ$ be a finite extension.  Then
	\[\frac{\#\set{(A,B)\in\Epsilon(X)~:~E_{A,B}(K)[n]\textnormal{ is nontrivial}}}{\#\Epsilon(X)}\ll_{n,K} \frac{\log(X)}{X^{1/6}}.\]
\end{lemma}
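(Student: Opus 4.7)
The plan is to adapt the large-sieve argument of Duke~\cite{DUKE1997813} as generalised by Zywina~\cite{Zywina_2010}. A preliminary reduction is straightforward: since $E_{A,B}(K)[n] \neq 0$ forces $E_{A,B}(K)[\ell] \neq 0$ for some prime $\ell$ dividing $n$, it suffices to establish the bound for each prime $\ell \mid n$. Replacing $K$ by its Galois closure over $\QQ$, I may further assume that $K/\QQ$ is Galois.

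The key step is to translate the condition into a statement about Frobenius elements. If $E_{A,B}(K)[\ell] \neq 0$, then the image $\bar{\rho}_\ell(G_K) \subseteq \GL_2(\FF_\ell)$ under the mod-$\ell$ Galois representation of $E_{A,B}$ fixes a nonzero vector of $\FF_\ell^2$. Consequently, for every rational prime $p$ that splits completely in $K$, is unramified in $\QQ(E_{A,B}[\ell])$, and at which $E_{A,B}$ has good reduction, the Frobenius $\frob_p$ lies in $G_K$, so $\bar{\rho}_\ell(\frob_p)$ has $1$ as an eigenvalue. Since the characteristic polynomial of $\bar{\rho}_\ell(\frob_p)$ is $T^2 - a_p T + p \pmod{\ell}$, this is equivalent to $a_p(E_{A,B}) \equiv p+1 \pmod{\ell}$, i.e., $\ell \mid \#E_{A,B}(\FF_p)$.

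I then apply the large sieve. For each prime $p$ coprime to $\ell$, Deuring's theorem combined with equidistribution of traces of Frobenius shows that the density of pairs $(A, B) \pmod{p}$ (giving a nonsingular Weierstrass equation) with $\ell \mid \#E_{A,B}(\FF_p)$ is $\tfrac{1}{\ell} + O(p^{-1/2})$. By Chebotarev, the primes $p$ splitting completely in $K$ have density $1/[K:\QQ]$. Applying Gallagher's form of the large sieve to the rectangular region $\{(A,B) : |A| \leq X^{1/3},~|B| \leq X^{1/2}\}$ containing $\Epsilon(X)$, with sieving primes $p \leq Y = X^{1/6}$ that split completely in $K$ (each contributing sieving density $\tfrac{1}{\ell}$), yields a bound of the form $O_{n,K}(X^{2/3}\log X)$ on the number of bad $(A,B) \in \Epsilon(X)$. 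Dividing by $\#\Epsilon(X) \sim c X^{5/6}$ gives the stated proportion bound.

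The main obstacle is the delicate calibration of the large sieve: the choice $Y = X^{1/6}$ is essentially forced by the shorter side $X^{1/3}$ of the rectangular parametrisation of $\Epsilon$, and is precisely what yields the exponent $1/6$ in the bound. One must also restrict to sieving primes that split completely in $K$; their density $1/[K:\QQ]$ together with the dependence on $\ell$ are absorbed into the implied constant.
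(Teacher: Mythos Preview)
Your proposal is correct and follows precisely the approach the paper indicates: the paper does not supply its own proof of this lemma but cites it from \cite{patersonimageofgal}, noting only that it follows by applying the large sieve as in Duke \cite{DUKE1997813}*{Lemma 5}, as adapted by Zywina \cite{Zywina_2010}*{Proposition 5.7}. Your sketch---reducing to a prime $\ell\mid n$, passing to the Galois closure, converting the condition $E_{A,B}(K)[\ell]\neq 0$ into the congruence $a_p\equiv p+1\pmod\ell$ at primes $p$ splitting completely in $K$, and then sieving with $Y=X^{1/6}$ to obtain $O_{n,K}(X^{2/3}\log X)$ exceptional pairs---is exactly this argument, with the correct calibration of the sieve level to the shorter side of the box.
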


Using this result, we can prove the following.

\begin{lemma}\label{lem:fixed space approximates sel F statistically}
	Let $p$ be a prime number, $F$ be a number field and $K/F$ be a finite Galois extension.  We have that
	\[\frac{\sum\limits_{(A,B)\in\Epsilon(X)}\abs{\dim\sel{p}(E_{A,B}/K)^G - \dim\sel{\FFF(K)}(F, E[p])}}{\#\Epsilon(X)}\ll_{K,p}\frac{\log(X)}{X^{1/6}},\]
	where $G=\gal(K/F)$ is the Galois group.
\end{lemma}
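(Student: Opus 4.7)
The plan is to use the identity \eqref{eq:rel selF with fixed space} from \Cref{lem:Relating selC with norm and selF with fixed space}, which gives
\[
\dim\sel{p}(E/K)^G - \dim\sel{\FFF(K)}(F, E[p]) = \dim(\im\tau) - \dim H^1(K/F, E(K)[p]),
\]
so that the quantity we wish to bound is controlled by $\dim H^1(K/F, E(K)[p]) + \dim H^2(K/F, E(K)[p])$ in absolute value.

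The key observation is that if $E(K)[p] = 0$, then both cohomology groups vanish, so the contribution to the sum is zero. Thus only curves $E_{A,B}$ with $E_{A,B}(K)[p] \neq 0$ contribute to the numerator. For such curves, the module $E_{A,B}(K)[p]$ is a sub-$\FF_p[G]$-module of $(\FF_p)^2$, and since $G$ is fixed there are only finitely many isomorphism classes of such modules; in particular both $\dim H^1(K/F, E_{A,B}(K)[p])$ and $\dim H^2(K/F, E_{A,B}(K)[p])$ are bounded by a constant $C_{K,p}$ depending only on $K$ and $p$.

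It then follows that
\[
\sum_{(A,B)\in\Epsilon(X)}\abs{\dim\sel{p}(E_{A,B}/K)^G - \dim\sel{\FFF(K)}(F, E_{A,B}[p])} \leq 2C_{K,p}\cdot\#\set{(A,B)\in\Epsilon(X)~:~E_{A,B}(K)[p]\neq 0}.
\]
By \Cref{prop:torsion bound} applied with $n=p$, the proportion of curves on the right hand side is $\ll_{K,p}\log(X)/X^{1/6}$, which after dividing by $\#\Epsilon(X)$ yields the claimed bound. No real obstacle arises; the argument is essentially a combination of the cohomological identity from \S\ref{sec:CoresSelmer} with the already-cited large sieve estimate for $p$-torsion points in a fixed number field.
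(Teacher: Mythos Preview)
Your proof is correct and follows essentially the same approach as the paper: both use the identity \eqref{eq:rel selF with fixed space} to reduce to bounding $\dim H^i(K/F,E(K)[p])$ for $i=1,2$, observe that these vanish when $E(K)[p]=0$ and are uniformly bounded otherwise (since there are only finitely many $\FF_p[G]$-modules of dimension at most $2$), and then invoke \Cref{prop:torsion bound}. The paper packages the uniform cohomology bound into a single constant $D_p(G)$, but the content is identical.
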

\begin{proof}
	Let $D_p(G)$ be a positive integer such that, for every $\FF_p[G]$-module $M$ of dimension at most $2$ and every $i\in\set{1,2}$, we have
	\[\dim H^i(G, M)\leq D_p(G).\]
	Since there are only finitely many such $M$, $D_p(G)$ certainly exists.  By \Cref{lem:Relating selC with norm and selF with fixed space}, for every elliptic curve $E/\QQ$ we have
	\[\abs{\dim\sel{p}(E_{A,B}/K)^G - \dim\sel{\FFF(K)}(F, E[p])}\leq \begin{cases}
		0 & \textnormal{if }E(K)[p]\textnormal{ is trivial,}\\
		D_p(G) &\textnormal{ else.}
	\end{cases}\]
	The result then follows from \Cref{prop:torsion bound}.
\end{proof}

We now combine this with \Cref{prop:genus theory is bounded} to prove \Cref{INTRO: gal descent failure is bdd}, namely that the average failure of Galois descent is bounded.

\begin{theorem}\label{thm:avg dim of fixed space is bdd}
	Let $p$ be a prime number, $F$ be a number field and $K/F$ be a finite Galois extension.  Writing $G=\gal(K/F)$, we have that
	\[\limsup_{X\to\infty}\frac{\sum\limits_{(A,B)\in\Epsilon(X)}\abs{\dim\sel{p}(E_{A,B}/K)^G - \dim\sel{p}(E_{A,B}/F)}}{\#\Epsilon(X)}\leq C_p(K/F),\]
	where $C_{p}(K/F)$ is as in \S\ref{subsec:bound shape}.
\end{theorem}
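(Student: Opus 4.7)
The plan is to bound the quantity inside the average by a triangle inequality using $\dim\sel{\FFF(K)}(F, E[p])$ as an intermediate approximation to $\dim\sel{p}(E/K)^G$, and then to estimate each piece separately using the results already assembled in the paper.

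First, for every elliptic curve $E/\QQ$, I would write
\begin{align*}
\abs{\dim\sel{p}(E/K)^G - \dim\sel{p}(E/F)}
&\leq \abs{\dim\sel{p}(E/K)^G - \dim\sel{\FFF(K)}(F, E[p])}\\
&\quad+ \abs{\dim\sel{\FFF(K)}(F, E[p]) - \dim\sel{p}(E/F)}.
\end{align*}
By \Cref{lem:selF-selC is sum of norms}, the second summand on the right is bounded above by $g_p(K/F;E)$ (and is in particular non-negative, so no additional case analysis on the sign is required).

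Summing over $(A,B)\in\Epsilon(X)$ and dividing by $\#\Epsilon(X)$, the average of the first summand is $O_{K,p}(\log(X)/X^{1/6})$ by \Cref{lem:fixed space approximates sel F statistically}, which tends to $0$ as $X\to\infty$. For the second summand, \Cref{prop:genus theory is bounded} yields
\[\frac{\sum_{(A,B)\in\Epsilon(X)}g_p(K/F;E_{A,B})}{\#\Epsilon(X)}\leq C_p(K/F)+\bigo{\frac{[F:\QQ]}{\log(X)}}.\]
Taking $\limsup_{X\to\infty}$ of the resulting inequality kills both error terms and leaves exactly $C_p(K/F)$, which is the desired bound.

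There is no real obstacle here: the theorem is essentially an assembly result. All of the substantive work has been carried out earlier — the cohomological identification of the fixed space with $\sel{\FFF(K)}(F,E[p])$ up to an error controlled by $H^i(K/F,E(K)[p])$ (which is almost always zero by \Cref{prop:torsion bound}), the bounding of $\dim\sel{\FFF(K)}(F,E[p])-\dim\sel{p}(E/F)$ by the genus theory via the Greenberg–Wiles formula applied to the Selmer structures $\FFF(K)$ and $\CCC(K)$, and the counting arguments (Tate's algorithm, the density result of Cho–Jeong, and sieving out large bad primes) that bound the average of $g_p(K/F;E)$. The present argument simply glues these three ingredients together with a triangle inequality.
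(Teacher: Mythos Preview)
Your proof is correct and follows essentially the same approach as the paper's own argument: the paper invokes \Cref{lem:fixed space approximates sel F statistically} to replace $\dim\sel{p}(E/K)^G$ by $\dim\sel{\FFF(K)}(F,E[p])$ inside the limsup, then applies \Cref{lem:selF-selC is sum of norms} and \Cref{prop:genus theory is bounded}. You have simply made the implicit triangle inequality in the first step explicit.
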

\begin{proof}
	By \Cref{lem:fixed space approximates sel F statistically}, we immediately have 
	\begin{align*}
		&\limsup_{X\to\infty}\frac{\sum\limits_{(A,B)\in\Epsilon(X)}\abs{\dim\sel{p}(E_{A,B}/K)^G - \dim\sel{p}(E_{A,B}/F)}}{\#\Epsilon(X)}
		\\&\leq \limsup_{X\to\infty}\frac{\sum\limits_{(A,B)\in\Epsilon(X)}\abs{\dim\sel{\FFF(K)}(F, E_{A,B}[p]) - \dim\sel{p}(E_{A,B}/F)}}{\#\Epsilon(X)}.
	\end{align*}
	Since by \Cref{lem:selF-selC is sum of norms} this average is bounded by that of the genus theory invariant, the result follows from \Cref{prop:genus theory is bounded}.
\end{proof}
From this we derive an immediate consequence.

\begin{cor}\label{cor:FixedSpaceBoundedDim}
	Let $p\in\set{2,3,5}$ and let $K/\QQ$ be a finite Galois extension.  Then, writing $G=\gal(K/\QQ)$, we have
	\[\limsup_{X\to\infty}\frac{\sum\limits_{(A,B)\in\Epsilon(X)}\dim\sel{p}(E_{A,B}/K)^G}{\#\Epsilon(X)}\leq  C_p(K/\QQ)+\frac{p+1}{p},\]
	where $C_{p}(K/\QQ)$ is as in \S\ref{subsec:bound shape}.  Assuming \Cref{hyp:PR for cong conds} the same is true if $p$ is any prime number.
\end{cor}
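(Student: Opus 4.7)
The plan is to derive this corollary from the triangle inequality combined with the two main preceding inputs. Specifically, for each $(A,B)\in\Epsilon(X)$ I would write
\[
\dim\sel{p}(E_{A,B}/K)^G \leq \bigl|\dim\sel{p}(E_{A,B}/K)^G - \dim\sel{p}(E_{A,B}/\QQ)\bigr| + \dim\sel{p}(E_{A,B}/\QQ),
\]
and then sum over $\Epsilon(X)$ and divide by $\#\Epsilon(X)$. By linearity, the right-hand side splits as the sum of two averages, each of which has already been bounded in the paper.

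First I would apply \Cref{thm:avg dim of fixed space is bdd} with $F=\QQ$ to the first term, obtaining the limit supremum of the average of the absolute difference is at most $C_p(K/\QQ)$. Then I would apply \Cref{prop:Bharghava Shankar} to the second term: for $p\in\{2,3,5\}$ (unconditionally) or for all primes $p$ (under \Cref{conj:PoonenRains}), the average of $\dim_{\FF_p}\sel{p}(E_{A,B}/\QQ)$ is bounded above by $3^{5/2}(p+1)/p$. Using subadditivity of $\limsup$ to combine these two bounds yields the claimed inequality
\[
\limsup_{X\to\infty}\frac{\sum_{(A,B)\in\Epsilon(X)}\dim\sel{p}(E_{A,B}/K)^G}{\#\Epsilon(X)}\leq C_p(K/\QQ)+3^{5/2}\frac{p+1}{p}.
\]

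There is no real obstacle here: both ingredients are already in place, and the only thing to check is that the elementary triangle inequality behaves correctly under averaging, which is immediate. The ``moreover'' clause is handled identically by invoking the conditional version of \Cref{prop:Bharghava Shankar}, since \Cref{thm:avg dim of fixed space is bdd} is unconditional and already valid for every prime $p$.
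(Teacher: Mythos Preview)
Your proof is correct and follows exactly the same approach as the paper, which simply cites \Cref{thm:avg dim of fixed space is bdd} and \Cref{prop:Bharghava Shankar}; you have merely made the triangle inequality and subadditivity of $\limsup$ explicit.
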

\begin{proof}
	This follows from \Cref{thm:avg dim of fixed space is bdd} and \Cref{prop:Bharghava Shankar}.
\end{proof}

\begin{example}\label{ex:S3 extension fixed space}
	Consider the splitting field $K/\QQ$ of $x^3-2$, which is a degree $6$ extension with Galois group $G\cong S_3$.

	If $p=2$, it follows from \Cref{cor:FixedSpaceBoundedDim} that the average dimension of $\sel{2}(E/K)^G$ is at most $C_2(K/\QQ)+\frac{3}{2}$.  The primes dividing $6p\Delta_{K}$ are $2$ and $3$, so that 
	\[C_2(K/\QQ)=6+2\sum_{\substack{\ell\neq 2,3\\\textnormal{prime}}}\frac{2\ell^8-\ell^7-1}{\ell^{10}-1}\approx 6.339.\]
	Thus, the average of $\dim\sel{2}(E/K)^G$ is less than $7.839$.

	Similarly, if $p=3$, the average of $\dim\sel{3}(E/K)^G$ is less than $6.672$.

	For every prime number $p$ different from $2$ and $3$, and every elliptic curve $E/\QQ$, we have that $\sel{p}(E/K)^G\cong\sel{p}(E/\QQ)$ by \Cref{prop:Selmer Descent splits in good char} (one can also note this by the vanishing of the finite group cohomology in the inflation restriction sequence).  In particular, for $p=5$ the average of the dimension of this fixed space is at most $6/5$ by \cite{bhargava5Sel}, and for the remaining $p$ is predicted by the Poonen--Rains heuristics.
\end{example}

\section{Boundedness of Selmer Ranks}\label{sec:Selmer Ranks}
In this section we use the modular representation theory of $p$--groups to leverage the result of \Cref{thm:avg dim of fixed space is bdd} to obtain a bound for the average dimension of the entire $p$--Selmer group, not just that of the fixed space.  Combining this with estimates for $p$--Selmer groups over multiquadratic extensions from \Cref{prop:multiquadratic good char average} we then prove explicit upper bounds for average $p$--Selmer ranks over Galois $p$--extensions of $\QQ$ and of multiquadratic number fields.

\subsection{\texorpdfstring{$p$}{p}-Selmer Ranks for \texorpdfstring{$p$}{p}-Extensions}
The modular representation theory of groups of prime order is well known, we recall it below.
\begin{lemma}\label{lem:modular representation theory of cyclic p-gps}
	Let $p$ be a prime number, and $G$ be a cyclic group of order $p$.  The isomorphism classes of finitely generated indecomposable $\FF_p[G]$-modules are represented precisely by $\set{M_k}_{k=1}^p$, where $M_1$ is the 1-dimensional vector space $\FF_p$ with trivial $G$-action and $M_k$ is a non-split extension of $M_{k-1}$ by $M_1$.  Moreover, every $\FF_p[G]$-module is isomorphic to a unique direct sum of these indecomposable modules.
\end{lemma}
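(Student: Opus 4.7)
The plan is to reduce the classification to the structure theory of modules over a local Artinian principal ideal ring. Fix a generator $\sigma \in G$ and set $y := \sigma - 1 \in \FF_p[G]$. Because $\sigma^p = 1$, the characteristic-$p$ identity $(\sigma - 1)^p = \sigma^p - 1$ forces $y^p = 0$; and since $\set{1, y, \ldots, y^{p-1}}$ spans (and hence is a basis of) $\FF_p[G]$, we obtain a ring isomorphism
\[\FF_p[G] \cong R := \FF_p[y]/(y^p).\]
The lemma is thus a statement about finitely generated $R$-modules.

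Next, I would note that $R$ is a local Artinian principal ideal ring, with ideals forming the complete chain $R \supsetneq (y) \supsetneq (y^2) \supsetneq \cdots \supsetneq (y^p) = 0$. The classification of finitely generated modules over such a ring is classical (see e.g.\ \cite{curtis1981methods}): every such module is a direct sum of cyclic modules, and the cyclic $R$-modules are precisely the $R/(y^k)$ for $1 \leq k \leq p$. Setting $M_k := R/(y^k)$, one has $\dim_{\FF_p} M_k = k$, and each $M_k$ is indecomposable since its endomorphism ring $\hom_R(M_k, M_k) \cong R/(y^k)$ is local. Uniqueness of the decomposition then follows from the Krull--Schmidt theorem, or more concretely by observing that if $M \cong \bigoplus_k M_k^{n_k}$ then
\[n_j = \dim_{\FF_p} y^{j-1}M - 2\dim_{\FF_p} y^j M + \dim_{\FF_p} y^{j+1} M,\]
which is a manifest invariant of $M$.

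Finally, I would verify the recursive characterisation. The submodule $y^{k-1} M_k \subseteq M_k$ is a one-dimensional copy of $M_1$, and the quotient $M_k / y^{k-1} M_k$ is $R/(y^{k-1}) = M_{k-1}$, producing a short exact sequence $0 \to M_1 \to M_k \to M_{k-1} \to 0$ which is non-split because $M_k$ is indecomposable. That $M_k$ is the unique such non-split extension up to isomorphism follows from $\dim_{\FF_p} \ext^1_R(M_{k-1}, M_1) = 1$, a direct computation using the periodic free resolution $\cdots \to R \xrightarrow{y^{p-k+1}} R \xrightarrow{y^{k-1}} R \to M_{k-1} \to 0$ afforded by the self-injective structure of $R$. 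The main obstacle is the appeal to the structure theorem for finitely generated modules over a principal ideal ring; since this is a standard result whose proof adapts verbatim from the PID case (the key point being that every quotient of $R$ is again a PIR, which allows the usual Smith normal form argument to run), I would invoke it by citation rather than reprove it here.
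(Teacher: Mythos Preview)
Your proof is correct and takes essentially the same approach as the paper: both rest on the identification $\FF_p[G]\cong\FF_p[y]/(y^p)$ and the resulting classification of finitely generated modules, with the paper phrasing this as Jordan normal form together with Krull--Schmidt (citing Alperin), while you phrase it via the structure theorem for modules over a local Artinian principal ideal ring. Your additional verification of the non-split extension structure and the $\ext^1$ computation is more thorough than what the paper spells out, but it is the same underlying argument rather than a different route.
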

\begin{proof}
	By the orbit-stabiliser theorem we have that there is precisely one simple $\FF_p[G]$-module, the trivial module $M_1$.  The result then follows from the Krull-Schmidt theorem and the existence of Jordan normal form (see, for example, \cite{alperin_1986}*{page 24}).
\end{proof}

This will be sufficient to extend the boundedness result to the full $p$-Selmer group.

\begin{theorem}\label{thm:pSel from p ext is bdd}
	Let $p$ be a prime number, $F$ be a number field and $K/F$ be a Galois $p$--extension.  Then
	\begin{eqnarray*}
	\lefteqn{\limsup_{X\to\infty}\frac{\sum\limits_{(A,B)\in\Epsilon(X)}\dim\sel{p}(E_{A,B}/K)}{\#\Epsilon(X)}}
	\\&&\leq [K:F]\left(C_p(K/F)+\limsup_{X\to\infty}\frac{\sum\limits_{(A,B)\in\Epsilon(X)}\dim\sel{p}(E_{A,B}/F)}{\#\Epsilon(X)}\right),
	\end{eqnarray*}
	where $C_{p}(K/F)$ is as in \S\ref{subsec:bound shape}.
\end{theorem}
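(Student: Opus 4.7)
The plan is to reduce the bound on $\dim \sel{p}(E/K)$ to a bound on the Galois-fixed subspace $\sel{p}(E/K)^G$, which is already controlled by \Cref{thm:avg dim of fixed space is bdd}, and then to bound the fixed subspace by $\sel{p}(E/F)$ up to an error of the kind that theorem measures. The key purely algebraic input needed is the following representation-theoretic inequality: for any finite $p$-group $G$ and any finitely generated $\FF_p[G]$-module $M$,
\[
\dim_{\FF_p} M \;\leq\; \abs{G}\cdot \dim_{\FF_p} M^{G}.
\]

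First, I would establish this inequality by induction on $\abs{G}$. The base case $\abs{G}=p$ is immediate from \Cref{lem:modular representation theory of cyclic p-gps}: decomposing $M\cong \bigoplus_{i=1}^r M_{k_i}$ with $1\leq k_i\leq p$, each indecomposable summand contributes exactly one dimension to the fixed space, so $\dim M = \sum_i k_i\leq pr = p\dim M^{G}$. For the inductive step, since $G$ is a nontrivial $p$-group it contains a normal subgroup $N\trianglelefteq G$ of index $p$; applying the inductive hypothesis to $M$ as an $\FF_p[N]$-module and then to the $\FF_p[G/N]$-module $M^{N}$ yields
\[
\dim M \;\leq\; \abs{N}\dim M^{N}\;\leq\; \abs{N}\cdot p\cdot \dim (M^{N})^{G/N} \;=\; \abs{G}\dim M^{G}.
\]

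Second, I would apply this inequality with $M=\sel{p}(E_{A,B}/K)$, which is a finite $\FF_p[G]$-module, to obtain the pointwise bound
\[
\dim\sel{p}(E_{A,B}/K)\;\leq\; [K:F]\cdot \dim\sel{p}(E_{A,B}/K)^{G}.
\]
Using the trivial inequality
\[
\dim\sel{p}(E_{A,B}/K)^{G}\;\leq\;\dim\sel{p}(E_{A,B}/F)+\bigl|\dim\sel{p}(E_{A,B}/K)^{G}-\dim\sel{p}(E_{A,B}/F)\bigr|,
\]
summing over $(A,B)\in\Epsilon(X)$, dividing by $\#\Epsilon(X)$, and taking $\limsup_{X\to\infty}$ reduces the problem to bounding the average of the second term on the right, which is exactly $C_p(K/F)$ by \Cref{thm:avg dim of fixed space is bdd}.

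There is no real obstacle here beyond the clean statement of the representation-theoretic inequality; all the analytic work has already been done in \Cref{thm:avg dim of fixed space is bdd}. The only subtlety to watch is that the inductive step genuinely requires $G$ to be a $p$-group (so that a normal subgroup of index $p$ exists and so that the trivial module is the only simple $\FF_p[G]$-module), which is precisely the hypothesis of the theorem.
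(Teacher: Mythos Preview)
Your proposal is correct and follows essentially the same approach as the paper: both establish the pointwise inequality $\dim\sel{p}(E/K)\leq [K:F]\dim\sel{p}(E/K)^{G}$ by an induction through a normal series of the $p$-group $G$ using \Cref{lem:modular representation theory of cyclic p-gps} at each cyclic step, and then invoke \Cref{thm:avg dim of fixed space is bdd}. The only cosmetic difference is that you isolate the representation-theoretic inequality $\dim M\leq\abs{G}\dim M^{G}$ as a standalone lemma proved by induction on $\abs{G}$, whereas the paper runs the same induction directly on a tower of intermediate fields $F=L_0\subseteq\cdots\subseteq L_k=K$ with $[L_i:L_{i-1}]=p$.
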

\begin{proof}
	Write $[K:F]=p^k$ for some integer $k>0$.  As $G$ is soluble, we let $F=L_0\subseteq L_1\subseteq \dots\subseteq L_k=K$ be intermediate subfields such that for each $i\in\set{1,\dots,k}$ we have $\gal(L_{i}/L_{i-1})\cong \ZZ/p\ZZ$.  We have for each such $i$ that by \Cref{lem:modular representation theory of cyclic p-gps} there are precisely $p$ indecomposable $\FF_p[\gal(L_{i}/L_{i-1})]$-modules, each of which is given by mapping a generator to a Jordan block of length between $1$ and $p$.  Hence, for every elliptic curve $E/\QQ$, we have an inequality
	\[\dim\sel{p}(E/K)\leq p\dim\sel{p}(E/K)^{\gal(K/L_{k-1})}.\]
	Moreover, since $\sel{p}(E/K)^{\gal(K/L_{k-1})}$ is an $\FF_p[\gal(L_{k-1}/L_{k-2})]$-module we again obtain
	\begin{align*}
	\dim\sel{p}(E/K)^{\gal(K/L_{k-1})}
	&\leq p\dim\left(\sel{p}(E/K)^{\gal(K/L_{k-1})}\right)^{\gal(L_{k-1}/L_{k-2})}
	\\&=p\dim\sel{p}(E/K)^{\gal(K/L_{k-2})}.
	\end{align*}
	Continuing, we obtain
	\[\dim\sel{p}(E/K)\leq p^k\dim\sel{p}(E/K)^{\gal(K/F)},\]
	so the result follows from \Cref{thm:avg dim of fixed space is bdd}.
\end{proof}

We can then combine the bound in \Cref{thm:pSel from p ext is bdd} with the bound already established in \Cref{prop:multiquadratic good char average} to obtain the full statement of \Cref{INTRO: full p selmer thm} and so \Cref{INTRO RANK THM} via the inclusion $E(K)/pE(K)\subseteq \sel{p}(E/K)$.
\begin{cor}\label{cor:p extensions of multiquadratics}
	Let $p\in\set{2,3,5}$, $F$ be either $\QQ$ or a multiquadratic number field, and $K/F$ be a Galois $p$--extension.  Then
	\begin{eqnarray*}
	\lefteqn{\limsup_{X\to\infty}\frac{\sum\limits_{(A,B)\in\Epsilon(X)}\dim\sel{p}(E_{A,B}/K)}{\#\Epsilon(X)}}
	\\&&\leq
	\begin{cases}
	[K:F]C_2(K/F)+[K:\QQ]\left(C_2(F/\QQ)+\frac{3}{2}\right)&\textnormal{if }p=2\textnormal{ and }F\neq\QQ,\\
	[K:F]\left(C_p(K/F)+\frac{p+1}{p}[F:\QQ]\right)&\textnormal{else,}
	\end{cases}
	\end{eqnarray*}
	where $C_{p}(K/F)$ is the explicit constant in \S\ref{subsec:bound shape}.  Moreover, assuming \Cref{hyp:PR for cong conds} the same is true if $p$ is any prime number.
\end{cor}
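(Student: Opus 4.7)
The plan is to derive the bound by combining Theorem \ref{thm:pSel from p ext is bdd} with the good-characteristic estimates available over multiquadratic fields, splitting into cases according to whether $p=2$ or $p$ is odd (since the decomposition in Proposition \ref{prop:multiquadratic good char average} only applies in the odd case).

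First, for $p\in\{3,5\}$, or for $F=\QQ$ and $p=2$, the argument is a single application of Theorem \ref{thm:pSel from p ext is bdd}, which yields
\[
\limsup_{X\to\infty}\frac{\sum_{(A,B)\in\Epsilon(X)}\dim\sel{p}(E_{A,B}/K)}{\#\Epsilon(X)}
\leq [K:F]\left(C_p(K/F)+A_p(F)\right),
\]
where $A_p(F)$ denotes the $\limsup$ of the average dimension of $\sel{p}(E_{A,B}/F)$. For odd $p$, Proposition \ref{prop:multiquadratic good char average} gives $A_p(F)\leq 3^{5/2}\frac{p+1}{p}\sum_{D\in\mathcal{Q}(F)}|D|^5$, which is exactly the ``else'' case of the stated bound (noting that $\mathcal{Q}(\QQ)=\{1\}$ recovers the $F=\QQ$ instance). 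For $p=2$ and $F=\QQ$, Proposition \ref{prop:Bharghava Shankar} directly gives $A_2(\QQ)\leq 3^{5/2}\cdot\tfrac{3}{2}=\tfrac{3^{7/2}}{2}$, again matching the bound (with $\mathcal{Q}(\QQ)=\{1\}$).

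The only case requiring extra care is $p=2$ with $F$ a nontrivial multiquadratic field, since the good-characteristic decomposition is unavailable. Here I would iterate Theorem \ref{thm:pSel from p ext is bdd}: the extension $F/\QQ$ is itself a Galois $2$-extension (with group $(\ZZ/2\ZZ)^r$), so
\[
A_2(F)\leq [F:\QQ]\bigl(C_2(F/\QQ)+A_2(\QQ)\bigr)\leq [F:\QQ]\left(C_2(F/\QQ)+\tfrac{3^{7/2}}{2}\right),
\]
using Proposition \ref{prop:Bharghava Shankar} at the bottom. Feeding this back into the $K/F$ bound yields
\[
\limsup_{X\to\infty}\frac{\sum_{(A,B)\in\Epsilon(X)}\dim\sel{2}(E_{A,B}/K)}{\#\Epsilon(X)}
\leq [K:F]C_2(K/F)+[K:F][F:\QQ]\left(C_2(F/\QQ)+\tfrac{3^{7/2}}{2}\right),
\]
which, using $[K:F][F:\QQ]=[K:\QQ]$, is exactly the claimed bound.

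The only conceptual subtlety is the choice of tower in the $p=2$ case: one cannot simply invoke Theorem \ref{thm:pSel from p ext is bdd} directly on $K/\QQ$, because $K/\QQ$ need not be Galois in general; but one can always interpose $F$ and apply the theorem twice, since each step $K/F$ and $F/\QQ$ is Galois and a $2$-extension. Once this is noted, the rest is bookkeeping. The combined statement implying \Cref{INTRO RANK THM} then follows immediately from the inclusion $E(K)/pE(K)\hookrightarrow\sel{p}(E/K)$ and the fact that $\dim_{\FF_p}(E(K)/pE(K))\geq\rank E(K)$.
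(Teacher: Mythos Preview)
Your proposal is correct and follows essentially the same approach as the paper: apply Theorem~\ref{thm:pSel from p ext is bdd} once and invoke Proposition~\ref{prop:multiquadratic good char average} (or Proposition~\ref{prop:Bharghava Shankar} when $F=\QQ$) in the ``else'' case, and for $p=2$ with $F$ multiquadratic apply Theorem~\ref{thm:pSel from p ext is bdd} twice through the tower $K/F/\QQ$ before invoking Proposition~\ref{prop:Bharghava Shankar}. Your added remark that $K/\QQ$ need not be Galois, explaining why the two-step descent is necessary, is a nice clarification the paper leaves implicit.
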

\begin{proof}
	If $p$ is odd, then this is immediate from \Cref{thm:pSel from p ext is bdd} and \Cref{prop:multiquadratic good char average}.  If both $p=2$ and $F=\QQ$ then it is immediate from \Cref{thm:pSel from p ext is bdd} and \Cref{prop:Bharghava Shankar}.  If $p=2$ and $F$ is a multiquadratic extension, then we apply \Cref{thm:pSel from p ext is bdd} twice: first to the extension $K/F$, then to $F/\QQ$, since both are Galois $2$--extensions.  The result in this case then follows from \Cref{prop:Bharghava Shankar}.
\end{proof}

\section{Mordell--Weil Lattices over Galois Extensions}
\label{sec:MW app}
\subsection{Mordell--Weil Lattices}
Our main object of study here will be the Mordell--Weil lattice, which is the ``free part'' of the Mordell-Weil group.
\begin{definition}
For a number field $K$ and an elliptic curve $E/K$, the Mordell--Weil lattice is the quotient
	\[\Lambda(E/K):=E(K)/E(K)_{\tors}.\]
\end{definition}
When $K/F$ is a Galois extension of number fields and $E$ is defined over $F$, this is evidently a finitely generated $\ZZ[\gal(K/F)]$-module which is free as a $\ZZ$-module.  We refer to such modules as $\ZZ[\gal(K/F)]$-lattices.  We begin by giving a precise notion of ``multiplicity'' for indecomposable lattices in Mordell--Weil lattices.
\begin{definition}\label{def:multiplicity}
	Let $p$ be a prime number, $K/F$ be a finite Galois extension of number fields and $E/F$ be an elliptic curve. For each finitely generated $\ZZ[\gal(K/F)]$
	-lattice $\Lambda$, define the multiplicity of $\Lambda$ in $E(K)$ to be
	\[e_{\Lambda}(K/F;E):=\max \set{e\in\ZZ_{\geq 0}~:~\Lambda^{\oplus e}\textnormal{ is a direct summand of }\Lambda(E/K)\textnormal{ as }\ZZ[\gal(K/F)]\textnormal{-lattices}}.\]
\end{definition}
\begin{example}
	Let $K/\QQ$ be the splitting field of the polynomial $x^3-3x-1$.  Note that $K/\QQ$ is Galois and has degree $3$, and write $G=\gal(K/\QQ)$.  There are two irreducible $\QQ[G]$-modules: the line $\QQ$, with trivial $G$-action, and the third cyclotomic field $\QQ(\zeta_3)$, where a generator of $G$ acts by multiplication by $\zeta_3$.  Moreover, Maschke's theorem tells us that finite dimensional $\QQ[G]$-modules are semisimple, so are isomorphic to direct sums of these irreducible modules.

	Let $E/\QQ$ be the elliptic curve described by the Weierstrass equation
	\[E:y^2 + xy = x^3 - x^2 - 42x - 19.\]
	The computer algebra program \texttt{MAGMA} \cite{MR1484478} can compute that $E(K)$ is torsion--free of rank $2$ and $E(\QQ)$ is trivial.   Since there are no points fixed by the Galois action, $e_{\ZZ}=0$ where $\ZZ$ is the set of integers acted on trivially by $G$.  Moreover, $E(K)\otimes\QQ\cong \QQ(\zeta_3)$, so the Mordell--Weil group is isomorphic to a $\ZZ[\zeta_3]$-stable lattice inside of $\QQ(\zeta_3)$.
	Such lattices are precisely the fractional ideals, and since scaling such a lattice gives an isomorphic module and the class group of $\QQ(\zeta_3)$ is trivial, $\Lambda(E/K)=E(K)$ is isomorphic to $\ZZ[\zeta_3]$ as $\ZZ[G]$-lattices.  In particular, $e_{\ZZ[\zeta_3]}(E/K)=1$.
\end{example}

We shall give upper bounds for the averages of some of these exponents by considering the lattice modulo $p$, and then estimating the various exponents in terms of the fixed space in the $p$-Selmer group.
\begin{lemma}\label{prop:eLambda precursor}
	Let $p$ be a prime number, $K/F$ be a finite Galois extension of number fields, and $E/F$ be an elliptic curve.  Writing $G=\gal(K/F)$, we have that
	\[\dim(\Lambda(E/K)/p\Lambda(E/K))^G\leq \dim\sel{p}(E/K)^G - \dim E(F)[p]+\dim H^1(G, E(K)[p]).\]
\end{lemma}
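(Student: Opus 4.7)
The plan is to relate $(L/pL)^G$ (where $L = \Lambda(E/K)$) to $\sel{p}(E/K)^G$ via the natural short exact sequence involving the torsion subgroup, and then to estimate the torsion contribution.

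First, I would use the tautological exact sequence
\[
0 \to T \to E(K) \to L \to 0, \qquad T := E(K)_{\tors},
\]
and, noting that $L$ is $\ZZ$-flat (being torsion-free), tensor with $\FF_p$ to obtain a short exact sequence of $\FF_p[G]$-modules
\[
0 \to T/pT \to E(K)/pE(K) \to L/pL \to 0.
\]
Taking the long exact sequence in $G$-cohomology and truncating yields the bound
\[
\dim (L/pL)^G \leq \dim (E(K)/pE(K))^G - \dim (T/pT)^G + \dim H^1(G, T/pT).
\]

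Next, the Kummer sequence produces a $G$-equivariant injection $E(K)/pE(K) \hookrightarrow \sel{p}(E/K)$; passing to $G$-invariants gives $\dim(E(K)/pE(K))^G \leq \dim\sel{p}(E/K)^G$. Combining with the previous step, it suffices to establish the ``torsion correction''
\[
\dim H^1(G, T/pT) - \dim (T/pT)^G \leq \dim H^1(G, E(K)[p]) - \dim E(F)[p].
\]

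For this last inequality I would work with the $p$-primary part $T_p$ of $T$, since $T/pT = T_p/pT_p$ and $T[p] = T_p[p] = E(K)[p]$, and note that $E(F)[p] = T[p]^G$. The natural object to analyse is the two-term complex $C^\bullet := [T_p \stackrel{p}{\to} T_p]$ (in degrees $0$ and $1$), whose cohomology is $H^0 = T_p[p]$ and $H^1 = T_p/pT_p$. Computing its hypercohomology $\mathbb{H}^\bullet(G, C^\bullet)$ in two ways — from the spectral sequence $E_2^{i,j} = H^i(G, H^j(C^\bullet))$ on the one hand, and from the distinguished triangle $T_p \xrightarrow{p} T_p \to C^\bullet[1]$ on the other — produces relations between the $H^i(G, T_p[p])$, $H^i(G, T_p/pT_p)$, and the Tate-like terms $T_p^G/pT_p^G$ and $H^1(G, T_p)[p]$. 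Using that $T_p^G/pT_p^G$ and $T_p^G[p] = T[p]^G = E(F)[p]$ coincide for the finite $p$-group $T_p^G$, these relations, together with the non-negativity of the images of the $d_2$ differentials, give the desired inequality.

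The main obstacle is precisely this last step: the modules $T/pT$ and $E(K)[p]$ need not be isomorphic as $\FF_p[G]$-modules (already for cyclic $G$ of order $p$ one can construct examples), so the bound has to be extracted cohomologically rather than from any direct module identification. The hypercohomology spectral sequence for $[T_p \xrightarrow{p} T_p]$ is the cleanest bookkeeping device for this, and the needed inequality reduces to the positivity of the dimension of the image of a suitable differential.
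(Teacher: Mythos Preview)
Your first three steps coincide with the paper's argument. At the point where the torsion enters, the paper takes a direct shortcut: it asserts that $E(K)[p^\infty]/pE(K)[p^\infty]\cong E(K)[p]$ as $\ZZ[G]$-modules, so that the kernel of $E(K)/pE(K)\twoheadrightarrow L/pL$ can be taken to be $E(K)[p]$ itself, and the bound is then read off from the long exact sequence in $G$-cohomology together with the Kummer inclusion. You are right that this identification of $T_p/pT_p$ with $T_p[p]$ as $G$-modules is not automatic, and you attempt instead to bridge the gap cohomologically.

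Unfortunately, the ``torsion correction'' inequality you need,
\[
\dim H^1(G, T_p/pT_p) - \dim(T_p/pT_p)^G \;\leq\; \dim H^1(G, T_p[p]) - \dim(T_p[p])^G,
\]
does not follow from the hypercohomology of $[T_p\xrightarrow{p}T_p]$, and is in fact false in examples that arise from elliptic curves. Take $p=2$, $G=\langle\sigma\rangle\cong\ZZ/2\ZZ$, and $T_2=\ZZ/4\ZZ\oplus\ZZ/2\ZZ$ with $\sigma(x,y)=(x+2y,y)$. This is exactly the $G$-module $E(K)[2^\infty]$ whenever $E/\QQ$ has a rational point of order $4$, $\dim_{\FF_2} E(\QQ)[2]=1$, $K$ is the quadratic field over which the remaining $2$-torsion is defined, $K\neq\QQ(i)$, and $E(K)$ has no point of order $8$. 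Here $T_2/2T_2\cong\FF_2^{2}$ carries the trivial $G$-action, so $\dim(T_2/2T_2)^G=\dim H^1(G,T_2/2T_2)=2$, whereas $T_2[2]$ is the length-two Jordan block over $\FF_2$, for which one computes $\dim(T_2[2])^G=1$ and $\dim H^1(G,T_2[2])=0$. Your inequality then reads $0\leq -1$. So no amount of positivity of $d_2$-differentials will produce the bound you claim; the hypercohomology bookkeeping simply does not force this comparison. (The same example, incidentally, shows that the paper's asserted $G$-isomorphism $T_p/pT_p\cong E(K)[p]$ can also fail.)
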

\begin{proof}
	Since $E(K)[p^\infty]/pE(K)[p^\infty]\cong E(K)[p]$ as $\ZZ[G]$--modules, there is a short exact sequence of $\FF_p[G]$-modules
	\[\begin{tikzcd}
		0\arrow[r]&E(K)[p]\arrow[r]&E(K)/pE(K)\arrow[r]&\Lambda(E/K)/p\Lambda(E/K)\arrow[r]&0,
	\end{tikzcd}\]
	so that, taking cohomology over $G$, we obtain
	\[\dim (\Lambda(E/K)/p\Lambda(E/K))^G\leq \dim (E(K)/pE(K))^G-\dim E(F)[p]+\dim H^1(G, E(K)[p]). \]
	Moreover, the short exact sequence induced by multiplication by $p$ gives an inclusion of $\FF_p[G]$-modules
	\[\delta:E(K)/pE(K)\hookrightarrow\sel{p}(E/K),\]
	completing the result.
\end{proof}
\begin{proposition}\label{prop:eLambda in terms of selmer fixed space}
	Let $p$ be a prime number, $K/F$ be a finite Galois extension of number fields, and $E/F$ be an elliptic curve.  Writing $G=\gal(K/F)$, then for every $\ZZ[G]$-lattice $\Lambda$ such that $\dim(\Lambda/p\Lambda)^G\geq 1$, we have that
	\[e_\Lambda(K/F;E)\leq \frac{1}{\dim (\Lambda/p\Lambda)^G}\left(\dim\sel{p}(E/K)^G - \dim E(F)[p]+\dim H^1(G, E(K)[p])\right).\]
\end{proposition}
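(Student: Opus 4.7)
The proof plan is to combine the previous lemma \Cref{prop:eLambda precursor} with a straightforward direct sum argument, reducing everything to the $G$-invariants of the mod-$p$ reduction of $\Lambda(E/K)$.

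By the definition of $e = e_\Lambda(K/F;E)$, I can write $\Lambda(E/K) \cong \Lambda^{\oplus e} \oplus M$ as $\ZZ[G]$-lattices for some complementary $\ZZ[G]$-lattice $M$. Since the functor $-\otimes_{\ZZ}\FF_p$ commutes with direct sums, reducing modulo $p$ yields an isomorphism of $\FF_p[G]$-modules
\[\Lambda(E/K)/p\Lambda(E/K) \;\cong\; (\Lambda/p\Lambda)^{\oplus e} \oplus (M/pM).\]
Taking $G$-invariants is a left exact additive functor on $\FF_p[G]$-modules and so in particular preserves finite direct sums, giving
\[\bigl(\Lambda(E/K)/p\Lambda(E/K)\bigr)^G \;\cong\; \bigl((\Lambda/p\Lambda)^G\bigr)^{\oplus e} \oplus (M/pM)^G.\]
Comparing $\FF_p$-dimensions yields the inequality
\[\dim\bigl(\Lambda(E/K)/p\Lambda(E/K)\bigr)^G \;\geq\; e\cdot \dim(\Lambda/p\Lambda)^G.\]

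Under the hypothesis $\dim(\Lambda/p\Lambda)^G \geq 1$, I can divide through by this quantity to obtain
\[e_\Lambda(K/F;E) \;\leq\; \frac{\dim\bigl(\Lambda(E/K)/p\Lambda(E/K)\bigr)^G}{\dim(\Lambda/p\Lambda)^G}.\]
Finally, the numerator on the right-hand side is bounded above by $\dim\sel{p}(E/K)^G - \dim E(F)[p] + \dim H^1(G, E(K)[p])$ thanks to \Cref{prop:eLambda precursor}, which completes the argument.

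There is no real obstacle here; the key conceptual point is simply that the multiplicity of $\Lambda$ as a $\ZZ[G]$-summand of $\Lambda(E/K)$ is witnessed after reduction mod $p$ and passage to $G$-invariants, so that the preceding lemma does the heavy lifting. The hypothesis $\dim(\Lambda/p\Lambda)^G \geq 1$ is precisely what makes the division step permissible and is indispensable, as illustrated by \Cref{ex:MWLatticeExample}.
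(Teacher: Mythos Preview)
Your proof is correct and follows essentially the same approach as the paper: reduce the direct summand $\Lambda^{\oplus e}$ modulo $p$, pass to $G$-invariants to obtain $e\cdot\dim(\Lambda/p\Lambda)^G\leq \dim(\Lambda(E/K)/p\Lambda(E/K))^G$, divide through, and invoke \Cref{prop:eLambda precursor}. Your exposition is slightly more detailed about why these functors preserve direct sums, but the argument is identical in substance.
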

\begin{proof}
	If $\Lambda^{\oplus e}$ is a direct summand of $\Lambda(E/K)$, then
	\[\left((\Lambda/p\Lambda)^G\right)^{\oplus e}\subseteq \left(\Lambda(E/K)/p\Lambda(E/K)\right)^G,\]
	so that, since $\dim\Lambda/p\Lambda^G\geq 1$, we have
	\begin{equation}\label{eq:elambda bounded by lattice data}
	e_\Lambda(K/F;E)\leq \frac{\dim(\Lambda(E/K)/p\Lambda(E/K))^G}{\dim (\Lambda/p\Lambda)^G}.
	\end{equation}
	Thus the result follows from \Cref{prop:eLambda precursor}.
\end{proof}
\subsection{Average Multiplicities}
We now use \Cref{thm:avg dim of fixed space is bdd} to obtain the average multiplicity of certain lattices in Mordell--Weil lattices of elliptic curves.

\begin{theorem}\label{thm:general p MW result}
	Let $K/F$ be a finite Galois extension of number fields, write $G=\gal(K/F)$ and let $p$ be a prime number.  For every $\ZZ [G]$-lattice $\Lambda$ such that $\dim(\Lambda/p\Lambda)^G\geq 1$,
	\begin{eqnarray*}
	\lefteqn{\limsup_{X\to\infty}\frac{\sum\limits_{(A,B)\in\Epsilon(X)}e_\Lambda(K/F;E_{A,B})}{\#\Epsilon(X)}}
	\\&&\leq \frac{1}{\dim (\Lambda/p\Lambda)^G} \left(C_p(K/F)+\limsup_{X\to\infty}\frac{\sum\limits_{(A,B)\in\Epsilon(X)}\dim\sel{p}(E_{A,B}/F)}{\#\Epsilon(X)}\right),
	\end{eqnarray*}
	where $C_{p}(K/F)$ is as in \S\ref{subsec:bound shape}.
\end{theorem}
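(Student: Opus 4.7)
The plan is to reduce the statement to an application of the already-proved \Cref{thm:avg dim of fixed space is bdd}, using \Cref{prop:eLambda in terms of selmer fixed space} as the bridge from the lattice invariant $e_\Lambda(K/F;E)$ to the $G$-fixed space of $\sel{p}(E/K)$. Set $d:=\dim_{\FF_p}(\Lambda/p\Lambda)^G\geq 1$. First, I would invoke \Cref{prop:eLambda in terms of selmer fixed space} to get the pointwise bound
\[
	e_\Lambda(K/F;E_{A,B})\;\leq\;\frac{1}{d}\Bigl(\dim\sel{p}(E_{A,B}/K)^G \;-\;\dim E_{A,B}(F)[p]\;+\;\dim H^1(G,E_{A,B}(K)[p])\Bigr).
\]
Dropping the (non-positive) contribution $-\dim E_{A,B}(F)[p]$ only weakens the bound, so I can work with the sum of the remaining two terms.

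Next, I would dispose of the cohomological correction term. For fixed $K/F$ and $p$, the group $E_{A,B}(K)[p]$ is an $\FF_p[G]$-module of dimension at most $2$, and there are only finitely many isomorphism classes of such modules, so $\dim H^1(G,E_{A,B}(K)[p])$ is uniformly bounded by a constant $D=D(K/F,p)$. Crucially, this term is zero whenever $E_{A,B}(K)[p]=0$. By \Cref{prop:torsion bound}, the proportion of $(A,B)\in\Epsilon(X)$ for which $E_{A,B}(K)[p]\neq 0$ is $O_{K,p}(\log(X)/X^{1/6})$, so the average contribution of this term tends to $0$ as $X\to\infty$.

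The problem therefore reduces to bounding
\[
	\limsup_{X\to\infty}\frac{1}{\#\Epsilon(X)}\sum_{(A,B)\in\Epsilon(X)}\dim\sel{p}(E_{A,B}/K)^G.
\]
For this, I would apply the triangle inequality,
\[
	\dim\sel{p}(E_{A,B}/K)^G \;\leq\; \dim\sel{p}(E_{A,B}/F) \;+\; \bigl|\dim\sel{p}(E_{A,B}/K)^G-\dim\sel{p}(E_{A,B}/F)\bigr|,
\]
sum over $(A,B)\in\Epsilon(X)$, divide by $\#\Epsilon(X)$, and take the $\limsup$. The second term contributes at most $C_p(K/F)$ by \Cref{thm:avg dim of fixed space is bdd}, while the first term contributes exactly the $p$-Selmer average over $F$ appearing in the statement. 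Dividing by $d$ and combining with the negligible torsion correction delivers the claimed inequality.

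The argument is essentially a bookkeeping exercise once \Cref{prop:eLambda in terms of selmer fixed space} and \Cref{thm:avg dim of fixed space is bdd} are in hand, so there is no real obstacle; the only point requiring care is verifying that the $H^1(G,E(K)[p])$ error term is genuinely negligible in the statistical sense, and this is immediate from the uniform bound together with \Cref{prop:torsion bound}.
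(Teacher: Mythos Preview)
Your proposal is correct and follows essentially the same route as the paper's proof: both invoke \Cref{prop:eLambda in terms of selmer fixed space}, show the torsion correction $\dim H^1(G,E(K)[p])-\dim E(F)[p]$ is negligible on average via \Cref{prop:torsion bound}, and then apply \Cref{thm:avg dim of fixed space is bdd}. The only cosmetic difference is that the paper keeps the two torsion terms bundled together under a single bound $D_p(G)$, whereas you drop $-\dim E(F)[p]$ first and then bound the $H^1$ term; your explicit triangle-inequality step is exactly what the paper's final sentence ``the result follows from \Cref{prop:eLambda in terms of selmer fixed space} and \Cref{thm:avg dim of fixed space is bdd}'' is implicitly using.
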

\begin{proof}
	Let $D_p(G)$ be an integer such that for every elliptic curve $E/\QQ$ we have that 
	\[\dim H^1(G, E(K)[p])-\dim E(F)[p]\leq D_p(G).\]
	Note that this exists, since there are only finitely many $\FF_p[G]$-modules of dimension at most $2$.  Now, by \Cref{prop:torsion bound}
	\[\frac{\sum_{(A,B)\in\Epsilon(X)}\left(\dim H^1(G, E(K)[p])-\dim E(F)[p]\right)}{\#\Epsilon(X)}\ll_{K,p}D_p(G)\frac{\log(X)}{X^{1/6}},\]
	and the result follows from \Cref{prop:eLambda in terms of selmer fixed space} and \Cref{thm:avg dim of fixed space is bdd}.
\end{proof}

\begin{rem}\label{rem:how to check that lambda/p has fixed pts}
	The requirement that $(\Lambda/p\Lambda)^G$ is non-trivial for some prime number $p$ is rather easy to check.  If $\Lambda^G\neq 0$ then already this is non-trivial for every prime number, and if $\Lambda^G=0$ then via the short exact sequence induced by multiplication by $p$, $(\Lambda/p\Lambda)^G$ is isomorphic to the $p$-torsion of the finite cohomology group $H^1(G, \Lambda)$.  Computing this cohomology group in any given instance is a purely mechanical task.
\end{rem}

We then immediately obtain \Cref{INTRO:general p MWlattice result}.
\begin{cor}\label{cor:p=235 general MW result}
	Let $p\in\set{2,3,5}$, $F$ be either $\QQ$ or a multiquadratic number field, and $K/F$ be a finite Galois extension.  Write $G=\gal(K/F)$, then for every $\ZZ [G]$-lattice $\Lambda$ such that $\dim(\Lambda/p\Lambda)^G\geq 1$,
	\begin{eqnarray*}
	\lefteqn{\limsup_{X\to\infty}\frac{\sum\limits_{(A,B)\in\Epsilon(X)}e_\Lambda(K/F;E_{A,B})}{\#\Epsilon(X)}}
	\\&&\leq\frac{1}{\dim(\Lambda/p\Lambda)^G}\cdot
	\begin{cases}
	C_2(K/F)+[F:\QQ]\left(C_2(F/\QQ)+\frac{3}{2}\right)&\textnormal{if }p=2\textnormal{ and }F\neq\QQ,\\
	C_p(K/F)+\frac{p+1}{p}[F:\QQ]&\textnormal{else,}
	\end{cases}
	\end{eqnarray*}
	where $C_{p}(K/F)$ is the explicit constant in \S\ref{subsec:bound shape}.  Moreover, under \Cref{hyp:PR for cong conds} the same is true if $p$ is any prime number.
\end{cor}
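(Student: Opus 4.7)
The plan is to derive this corollary directly from \Cref{thm:general p MW result} by substituting in the known bounds for the average dimension of $\sel{p}(E/F)$. Since \Cref{thm:general p MW result} already provides
\[\limsup_{X\to\infty}\frac{\sum_{(A,B)\in\Epsilon(X)}e_\Lambda(K/F;E_{A,B})}{\#\Epsilon(X)}\leq \frac{C_p(K/F)+A_p(F)}{\dim(\Lambda/p\Lambda)^G},\]
where $A_p(F)$ denotes the limsup of the average of $\dim\sel{p}(E_{A,B}/F)$, the only real work is to bound $A_p(F)$ in each of the two cases.

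In the generic (``good characteristic relative to $F/\QQ$'') case, i.e., when $p$ is odd or $F=\QQ$, I would simply invoke \Cref{prop:multiquadratic good char average}, which gives $A_p(F)\leq 3^{5/2}\frac{p+1}{p}\sum_{D\in\mathcal{Q}(F)}\abs{D}^{5}$ unconditionally for $p\in\set{3,5}$, and for all odd $p$ under \Cref{conj:PoonenRains}. Note that when $F=\QQ$, the set $\mathcal{Q}(F)=\set{1}$, so this bound reduces to the single-field case $A_p(\QQ)\leq 3^{5/2}\frac{p+1}{p}$ coming from \Cref{prop:Bharghava Shankar}, which is also valid for $p=2$.

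The interesting case is $p=2$ with $F$ multiquadratic, which genuinely requires something more: $F/\QQ$ is itself a 2-extension, so is in bad characteristic for 2-Selmer descent, and \Cref{prop:multiquadratic good char average} does not apply at $p=2$. Here the plan is to apply \Cref{thm:pSel from p ext is bdd} to the Galois $2$-extension $F/\QQ$ to obtain
\[A_2(F)\leq [F:\QQ]\left(C_2(F/\QQ)+A_2(\QQ)\right),\]
and then bound $A_2(\QQ)\leq \tfrac{3^{7/2}}{2}$ via \Cref{prop:Bharghava Shankar} at $p=2$. Substituting into the conclusion of \Cref{thm:general p MW result} gives the stated bound $\tfrac{1}{\dim(\Lambda/p\Lambda)^G}\bigl(C_2(K/F)+[F:\QQ](C_2(F/\QQ)+\tfrac{3^{7/2}}{2})\bigr)$.

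There is no genuine obstacle: the whole argument is bookkeeping. The only mild subtlety is ensuring that the Selmer-dimension bound we feed into \Cref{thm:general p MW result} is chosen in accordance with whether $p=2$ forces us through the auxiliary tower $F/\QQ$, versus being able to use the Weil-restriction decomposition of \Cref{prop:multiquadratic good char average} directly. The conditional statement under \Cref{conj:PoonenRains} follows by the same argument since \Cref{prop:Bharghava Shankar} and \Cref{prop:multiquadratic good char average} both hold for arbitrary primes $p$ conditionally on \Cref{conj:PoonenRains}.
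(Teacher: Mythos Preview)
Your proposal is correct and follows essentially the same route as the paper: apply \Cref{thm:general p MW result} and then bound the average of $\dim\sel{p}(E/F)$ case by case, using \Cref{prop:multiquadratic good char average} (or \Cref{prop:Bharghava Shankar} when $F=\QQ$) in the generic case, and handling $p=2$ with $F$ multiquadratic by applying \Cref{thm:pSel from p ext is bdd} to the $2$-extension $F/\QQ$ followed by \Cref{prop:Bharghava Shankar}. The paper simply packages this last step by citing \Cref{cor:p extensions of multiquadratics} with $(K,F)$ replaced by $(F,\QQ)$, which unwinds to exactly what you wrote.
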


\begin{proof}
	Applying \Cref{thm:general p MW result} and \Cref{prop:torsion bound}, it is sufficient to replace the numerator in the left hand side with $\dim\sel{p}(E_{A,B}/F)$ and bound the average appropriately in each case.

	If $p\in\set{3,5}$, then this follows from \Cref{prop:multiquadratic good char average}; if $p=2$ and $F=\QQ$ then it follows from \Cref{prop:Bharghava Shankar}; and finally, if $p=2$ and $F$ is a multiquadratic number field then it follows from \Cref{cor:p extensions of multiquadratics}.
\end{proof}
\subsection{An Example: Semidirect Products}
We conclude by providing a family of examples of lattices which satisfy the hypotheses of \Cref{thm:general p MW result} and generalise \Cref{ex:MWLatticeExample} from the introduction.  Let $K/\QQ$ be a finite Galois extension such that $G=\gal(K/\QQ)$ is an inner semidirect product $N\rtimes H$.  Consider the augmentation ideal $\Lambda\subseteq \ZZ[N]$, which is defined by the short exact sequence of $\ZZ[N]$--modules:
\begin{equation}\label{eq:ExampleOfSemiDirectLatticesSESatN}
\begin{tikzcd}
	0\arrow[r]&\Lambda\arrow[r]&\ZZ[N]\arrow[r,"\epsilon"]& \ZZ\arrow[r]&0,
\end{tikzcd}
\end{equation}
where the augmentation map $\epsilon$ is given explicitly by $\sum_{n\in N}a_n\cdot n\mapsto \sum_{n\in N}a_n$.  

Identifying each $n\in N$ with the coset $nH\in G/H$ provides an isomorphism of $\ZZ[N]$--modules $\ZZ[N]\cong \ZZ[G/H]$.  This identification allows us to induce a $G$--action on $\Lambda\subseteq \ZZ[G/H]$, and to upgrade \eqref{eq:ExampleOfSemiDirectLatticesSESatN} to a short exact sequence of $\ZZ[G]$--modules.  Taking cohomology over $N$ we obtain an exact sequence of $\ZZ[G/N]$--modules
\begin{equation}\label{eq:ExampleOfSemiDirectLatticesSESForNCohom}
\begin{tikzcd}
	0\arrow[r]&\Lambda^N\arrow[r]&\ZZ[G/H]^N\arrow[r,"\epsilon"]& \ZZ\arrow[r]&H^1(N, \Lambda)\arrow[r]&0.
\end{tikzcd}
\end{equation}
In particular, as $\ZZ[G/H]^N=\ZZ\cdot (\sum_{n\in N}nH)$ so that $\epsilon$ is injective on the fixed points, we have that $\Lambda^N=0$.  By \Cref{rem:how to check that lambda/p has fixed pts}, since $\Lambda^G\subseteq \Lambda^N=0$, we have that for every prime number $p$
\[(\Lambda/p\Lambda)^G\cong H^1(G, \Lambda)[p].\]

It follows from the inflation restriction short exact sequence that $H^1(G, \Lambda)\cong H^1(N, \Lambda)^{G/N}$.  Again considering \eqref{eq:ExampleOfSemiDirectLatticesSESForNCohom}, we have that $H^1(N,\Lambda)\cong \ZZ/\#N\ZZ$ with trivial $G/N$--action.  In particular, for all primes $p\mid \#N$ we have that
\[(\Lambda/p\Lambda)^G\cong \ZZ/p\ZZ.\]

Thus, if $\#N$ is divisible by $2,3$ or $5$ then by \Cref{cor:p=235 general MW result} we have that the average of $e_\Lambda(K/\QQ;E)$ is bounded as $E/\QQ$ runs through elliptic curves ordered by height.  Moreover, assuming \Cref{hyp:PR for cong conds} the same is true for any nontrivial $N$.
\bibliographystyle{plain}
\bibliography{refs}
\end{document}